\documentclass[british, a4paper,11pt]{article}

\usepackage[utf8]{inputenc}
\usepackage[british]{babel}

\usepackage[mono=false]{libertine}
\usepackage[T1]{fontenc}

\usepackage{makecell}
\usepackage{amsthm}
\usepackage{amssymb,bbm}
\usepackage[cmintegrals,libertine]{newtxmath}
\usepackage[cal=dutchcal, calscaled=.95, scr=boondoxo]{mathalfa}
\useosf
\let\mathcal\mathbcal

\usepackage{microtype}

\usepackage{array,booktabs}
\newcolumntype{x}[1]{%
>{\centering\hspace{0pt}}p{#1}}%

\usepackage{multirow}

\usepackage[explicit]{titlesec}

\titleformat{\section}[block]{\normalfont\large\bfseries\boldmath\centering}{\raggedright\makebox[1em][l]{\thesection.}}{.25em}{#1}
\titleformat{\subsection}[block]{\normalfont\bfseries\boldmath\centering}{\raggedright\makebox[1em][l]{\thesubsection.}}{1em}{#1}
\titleformat{\subsubsection}[runin]{\normalfont\bfseries\boldmath}{\raggedright\makebox[1em][l]{\thesubsubsection.}}{1.5em}{#1.~\hbox{---}}

\renewenvironment{abstract}{%
\begin{center}
\begin{minipage}{.9\textwidth}\linespread{1.05}\selectfont\small
\makebox[5em][l]{\bfseries\abstractname.~\hbox{---}}
\normalfont}
{\par\vspace{1em}
\end{minipage}
\end{center}
}

\usepackage[a4paper,vmargin={2cm,2cm},hmargin={2.25cm,2.25cm}]{geometry}
\linespread{1.05}\selectfont

\usepackage{tikz}

\usepackage[font=sf]{caption}
	\captionsetup{width=.9\textwidth}

\usepackage{hyperref}
\hypersetup{
	colorlinks=true,
		citecolor=blue!60!black,
		linkcolor=red!60!black,
		urlcolor=green!40!black,
		filecolor=yellow!50!black,
	breaklinks=true,
	pdfpagemode=UseNone,
	bookmarksopen=false,
}

\usepackage{enumitem}

\newtheorem{thm}{\bfseries \upshape Theorem}[section]
\newtheorem{lem}[thm]{Lemma}
\newtheorem{prop}[thm]{Proposition}

\newtheorem{ex}[thm]{Example}

\theoremstyle{definition}
\newtheorem{rem}[thm]{Remark}

\newlist{thmlist}{enumerate}{1}
\setlist[thmlist]{label=(\roman{thmlisti}), ref=\thethm(\roman{thmlisti}),noitemsep}

\renewcommand{\P}{\mathbb{P}}
\newcommand{\E}{\mathbb{E}}
\newcommand\Es[1]{\mathbb{E}\left[#1\right]}
\renewcommand\Pr[1]{\mathbb{P}\left(#1\right)}
\newcommand{\Var}{\mathrm{Var}}
\newcommand{\R}{\mathbb{R}}
\newcommand{\Z}{\mathbb{Z}}
\newcommand{\N}{\mathbb{N}}
\newcommand{\D}{\mathbb{D}}

\newcommand{\ind}[1]{\mathbbm{1}_{\{#1\}}}

\def\Vcd{\mathcal{V}^{\mathsf{d}}}

\newcommand{\map}{M}
\newcommand{\Map}{\mathbf{M}}

\newcommand{\Tree}{\mathbf{T}}

\newcommand{\dgr}{d_{\mathrm{gr}}}
\newcommand{\pgr}{p_{\mathrm{unif}}}

\newcommand{\Bmap}{\mathscr{M}}
\newcommand{\dBmap}{\mathscr{D}}
\newcommand{\pBmap}{\mathscr{m}}
\newcommand{\CRT}{\mathscr{T}}
\newcommand{\dCRT}{\mathscr{d}}
\newcommand{\pCRT}{\mathscr{p}}

\newcommand{\exc}{\mathrm{exc}}
\newcommand{\br}{\mathrm{br}}
\newcommand{\scale}{v}
\newcommand{\poids}{p}

\newcommand{\ic}{{i\mkern1mu}}
\newcommand{\e}{\operatorname{e}} 
\newcommand{\q}{\boldsymbol{q}}
\renewcommand{\d}{\mathrm{d}}

\renewcommand{\ge}{\geqslant}
\renewcommand{\le}{\leqslant}
\renewcommand{\geq}{\geqslant}
\renewcommand{\leq}{\leqslant}

\newcommand{\cv}[1][n]{\enskip\mathop{\longrightarrow}^{}_{#1 \to \infty}\enskip}
\newcommand{\cvloi}[1][n]{\enskip\mathop{\longrightarrow}^{(d)}_{#1 \to \infty}\enskip}

\newcommand{\cvproba}[1][n]{\enskip\mathop{\longrightarrow}^{\P}_{#1 \to \infty}\enskip}
\newcommand{\eqloi}{\enskip\mathop{=}^{(d)}\enskip}
\newcommand{\equi}[1][n]{\enskip\mathop{\sim}^{}_{#1 \to \infty}\enskip}

\usepackage{mathtools}

\DeclarePairedDelimiter\floor{\lfloor}{\rfloor}

\let\originalleft\left
\let\originalright\right
\renewcommand{\left}{\mathopen{}\mathclose\bgroup\originalleft}
\renewcommand{\right}{\aftergroup\egroup\originalright}

\DeclareSymbolFont{extraup}{U}{zavm}{m}{n}
\DeclareMathSymbol{\vardspade}{\mathalpha}{extraup}{81}
\DeclareMathSymbol{\varheart}{\mathalpha}{extraup}{86}
\DeclareMathSymbol{\vardiamond}{\mathalpha}{extraup}{87}
\DeclareMathSymbol{\varclub}{\mathalpha}{extraup}{84}

\makeatletter
\renewcommand*{\@fnsymbol}[1]{\ensuremath{\ifcase#1\or  \vardspade \or \varheart \or \vardiamond\or \varclub \or
   \mathsection\or \mathparagraph\or \|\or **\or \dagger\dagger   \or \ddagger\ddagger \else\@ctrerr\fi}}
\makeatother

\author{
	Igor \textsc{Kortchemski}\thanks{CNRS \& CMAP, \'{E}cole polytechnique.\hfill  \href{mailto:igor.kortchemski@math.cnrs.fr}{\texttt{igor.kortchemski@math.cnrs.fr}}} 
\qquad\&\qquad
	Cyril \textsc{Marzouk}\thanks{CMAP, \'{E}cole polytechnique.\hfill  \href{mailto:cyril.marzouk@polytechnique.edu}{\texttt{cyril.marzouk@polytechnique.edu}}}
}

\title{Large deviation Local Limit Theorems\\and\\limits of biconditioned Trees and Maps}

\begin{document}

\maketitle

\begin{abstract}
We first establish new local limit estimates for the probability that a nondecreasing integer-valued random walk lies at time $n$ at an arbitrary value, encompassing in particular large deviation regimes. This enables us to derive scaling limits of such random walks conditioned by their terminal value at time $n$ in various regimes. We believe both to be of independent interest. 
We then apply these results to obtain invariance principles for the \L ukasiewicz path of Bienaymé--Galton--Watson trees conditioned on having a fixed number of leaves and of vertices at the same time, which constitutes a first step towards understanding their large scale geometry. 
We finally deduce from this scaling limit theorems for random bipartite planar maps under a new conditioning by fixing their number of vertices, edges, and faces at the same time. In the particular case of the uniform distribution, our results confirm a prediction of Fusy \& Guitter on the growth of the typical distances and show furthermore that in all regimes, the scaling limit is the celebrated Brownian map.
\end{abstract}

\section{Introduction}
The main purpose of this paper is to  obtain new large deviation local limit theorems for random walks. As applications, we obtain scaling limits for random walks under a `bridge-type' conditioning in  large deviation regimes,  limit theorems for large random Bienaymé--Galton--Watson trees conditioned on having a fixed number of leaves and of vertices at the same time, and we also establish continuum limits of large random Boltzmann planar maps under a new conditioning by fixing their total number of vertices, edges, and faces at the same time.
Let us first present our results concerning random walks before briefly mentioning applications concerning random trees as well as random maps.

\subsection{Local limit theorems and bridge-type conditioning for random walks}
\label{sec:intro_LLT}

Consider a random walk, say $S_{n} = \xi_{1} + \dots + \xi_{n}$ for every $n \ge 1$, where $(\xi_{i})_{i\ge 1}$ is a sequence of i.i.d.~copies of a random variable $\xi$. 
The question of estimating quantities of the form $\P(S_{n} \in (x,x+T])$ for $T \in (0,\infty]$ is fundamental in the study of random walks with numerous applications (e.g.~in statistics, risk theory, statistical mechanics, queueing theory, etc.), and has received considerable interest, in particular in large deviation regimes. When the tail of the jump distribution vanishes exponentially fast (which is roughly speaking the so-called Cramér condition), seminal results go back to Cramér~\cite{Cra38} with  extensions by  Bahadur \& Ranga Rao~\cite{BR60} and Petrov~\cite{Pet65}; see also  H\"oglund~\cite{Hog79}. The class of subexponential random walks (for which the Cramér condition does not hold), which bears close connections with the so-called `one-big jump prinicple', has also received considerable interest, see the important paper by Denisov, Dieker \& Schneer~\cite{DDS08} and references therein. Quite some effort has also been devoted to the much more delicate estimation of `local' probabilities (corresponding to $T<\infty$), and whole books have been devoted to the subject; see in particular Borovkov \& Borovkov~\cite{BB08} for an exhaustive account.

In this work, given a sequence $x_{n} \to \infty$ as $n \to \infty$, we are interested in finding an asymptotic equivalent of the probability $\P(S_{n} = x_{n})$, as well as bounding  $\P(S_{n} = x_{n}+ k)$ uniformly in $k$.  This may be viewed as a `local' analogue of results of Jain \& Pruitt~\cite{JP87} who obtained estimates for $\P(S_{n} \leq x_{n})$.

As a first application, these estimates will be the key ingredient in Sec.~\ref{sec:marches} in establishing scaling limit theorems for the associated bridges, namely the trajectory $(S_{0},S_{1}, \dots,S_{n})$ under the so-called `bridge' conditioning $\P(\, \cdot \mid S_{n}=x_{n})$, extending a result of Liggett~\cite{Lig68}, who roughly speaking focused on bridge conditioned random walks in the domain of attraction of a stable law where the value of the endpoint is of the same order as the fluctuations of the random walk.
In turn, this will be useful for applications to random trees and maps in Sec.~\ref{sec:arbres} and~\ref{sec:cartes}.
Let us mention that scaling limits of random walk bridges conditioned to stay positive (see~\cite{CC13}) appear in connection with statistical physics and in particular with polymer models.

In view of our applications, we shall restrict ourselves to the case where $\xi$ is supported by $\Z_{\ge0}$, and, in order to avoid periodicity issues, that its support is not included in a sublattice of $\Z$, i.e.~the largest $h>0$ such that there exists $a \in \R$ such that $ \{k \geq 0: \P(\xi=k)>0\} \subset a+h\Z$ is $h=1$.  Before presenting the different regimes, let us say some words on the main method. It is a classical change of probability (sometimes called the Cram\'er or Esscher transform) which modifies the distribution of the increments in order to tune the drift of the random walk. Precisely, let
\[G(s)=\sum_{k \ge 0} s^{k} \P(\xi=k)\]
denote the generating function of $\xi$, and let $\rho \ge 1$ denote its radius of convergence. For every $b \in (0, \rho)$, let $\xi^{(b)}$ have the law with generating function $G(bs)/G(b)$ and observe that $\E[\xi^{(b)}] = b G'(b)/G(b)$. It is a simple matter to check that this quantity increases with $b$, and further, when $b \to 0$, it converges to $i_{\xi} \coloneqq \inf\{k \ge 0 : \P(\xi=k) > 0\}$, whereas when $b \to \rho$ the limit is finite if and only if $G'(\rho) < \infty$.
In particular, when $ i_{\xi}<x_{n}/n < \rho G'(\rho)/G(\rho)$,  one may define $b_{n}$ such that $b_{n} G'(b_{n})/G(b_{n}) = x_{n}/n$. It follows that
\begin{equation}\label{eq:tilt}
\Pr{S_{n}=x_{n}+k}= \frac{G(b_{n})^{n}}{b_{n}^{x_{n}+k}} \cdot \Pr{S^{(b_{n})}_{n}=x_{n}+k},
\end{equation}
where $S^{(b_{n})}_{n}$ is the sum of $n$ independent copies of $\xi^{(b_{n})}$. Thus estimating $\P(S_{n}=x_{n}+k)$ boils down to estimating $\P(S^{(b_{n})}_{n}=x_{n}+k)$. 
The advantage is that the event $ \{S^{(b_{n})}_{n}=x_{n}\}$ is more `typical', however the drawback to working with $S^{(b_{n})}$ is that the step distribution then varies with $n$.

In the `Brownian regime', we shall obtain estimates of the following form:
\begin{equation}\label{eq:LTT_forme_generale}
\sup_{k \ge -x_{n}} \left|\scale_{n} \cdot \frac{b_{n}^{x_{n}+k}}{G(b_{n})^{n}} \cdot \Pr{S_{n} = x_{n}+k} -  \frac{1}{\sqrt{2\pi}}\exp\left(-\frac{k^{2}}{2\scale_{n}^{2}}\right)\right| \cv 0,
\end{equation}
for some appropriate scaling factor $\scale_{n}$.

A simple case which falls in this regime is when $x_{n}/n$ converges to some limit belonging to the interval $(i_{\xi}, \rho G'(\rho)/G(\rho))$. In this case, $b_{n}$ converges to a limit $b \in (0, \rho)$, and the generating function $G$ is very regular at this point, so one can readily  adapt classical proofs of the local limit theorem in the Gaussian regime to the walk $S^{(b_{n})}_{n}$ and use~\eqref{eq:tilt}.
This case is indeed known, see e.g.~Borovkov \& Borovkov~\cite[Theorem~6.1.5]{BB08} and is included in the next theorem just for completeness. However, to the best of our knowledge the two other cases are new. 
Note that no regularity assumptions are made on $G$ in the first two cases. For the last one, we recall from e.g.~\cite[Definition~VI]{FS09}) that $G$ is said to be \emph{$\Delta$-analytic} when there exist a radius $R>\rho$ and an angle $\phi \in (0,\pi/2)$ such that $G$ is analytic on the domain $ \{z : |z|<R, z \neq \rho, |\arg(z-1)|>\phi\}$. 

\begin{thm}\label{thm:LTT_forme_generale_hors_stable}
Let $b_{n}$ be defined by $b_{n} G'(b_{n})/G(b_{n}) = x_{n}/n$.
The estimate~\eqref{eq:LTT_forme_generale} holds in each of the following three regimes:
\begin{thmlist}
\item\label{thm:LTT_bulk}
\emph{The bulk regime:} $x_{n}/n$ converges to some limit belonging to $(i_{\xi}, \rho G'(\rho)/G(\rho))$, and $\scale_{n}^{2} / n$ equals the variance of $\xi^{(b_{n})}$, explicitly given by
\[\frac{\scale_{n}^{2}}{n} = \frac{b_{n}^{2} G^{(2)}(b_{n}) + b_{n} G'(b_{n})}{G(b_{n})} - \left(\frac{b_{n} G'(b_{n})}{G(b_{n})}\right)^{2}.\]

\item\label{thm:LTT_small_endpoint}
\emph{The small endpoint regime}: $x_{n}/n \to 0$ and $\Pr{\xi=0}>0$, $\Pr{\xi=1}>0$, and finally $\scale_{n}^{2} = x_{n}$.

\item\label{thm:LTT_large_endpoint}
\emph{The large endpoint regime}: $x_{n}/n \to \infty$, $G$ is $\Delta$-analytic, and there exist $c, \alpha >0$ such that $G(\rho-z) \sim c z^{-\alpha}$ as $z \to 0$ with $\mathrm{Re}(z)>0$; finally $\scale_{n}^{2} = x_{n}^{2}/(\alpha n)$.
\end{thmlist}
\end{thm}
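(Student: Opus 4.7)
The common strategy is to exploit the Cramér tilt \eqref{eq:tilt}: after tilting by $b_n$, the variable $\xi^{(b_n)}$ has mean exactly $x_n/n$ by construction, so the statement reduces to a local central limit theorem for the triangular-array sum $S_n^{(b_n)}$ with an appropriate scale. In regime (i), that scale equals $\sqrt{n \Var(\xi^{(b_n)})}$, which is precisely the $v_n$ given in the statement. In regime (ii), the law $\xi^{(b_n)}$ is asymptotically concentrated on $\{0,1\}$ with mean $x_n/n$, so $n \Var(\xi^{(b_n)}) \sim x_n$. In regime (iii), using $G(\rho-z) \sim c z^{-\alpha}$ together with standard differentiation arguments, one shows $b_n \to \rho$ with $\rho - b_n \sim \rho \alpha n/x_n$, from which $n \Var(\xi^{(b_n)}) \sim x_n^2/(\alpha n)$. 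In each case the stated $v_n$ matches this variance up to a factor $1+o(1)$.

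To prove~\eqref{eq:LTT_forme_generale}, I would use Fourier inversion: setting $\phi_n(t) := G(b_n e^{\ic t})/G(b_n)$,
$$v_n \, \Pr{S_n^{(b_n)} = x_n+k} = \frac{1}{2\pi} \int_{-\pi v_n}^{\pi v_n} e^{-\ic s k/v_n} \cdot \bigl[e^{-\ic s x_n/v_n} \phi_n(s/v_n)^n \bigr] \, ds,$$
while the target Gaussian density $\frac{1}{\sqrt{2\pi}} e^{-k^2/(2v_n^2)}$ admits the same integral representation with the bracketed factor replaced by $e^{-s^2/2}$ over $\R$. Splitting into $|s| \le A$ and $|s| > A$, a Taylor expansion of $\log \phi_n$ at $0$ combined with the mean and variance identities gives $e^{-\ic s x_n/v_n} \phi_n(s/v_n)^n \to e^{-s^2/2}$ uniformly on compact sets, so the central contribution converges, uniformly in $k$, to the Gaussian Fourier integral.

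The main obstacle, in all three regimes, is the tail bound showing that $\int_{A \le |s| \le \pi v_n} |\phi_n(s/v_n)|^n \, ds = o(1)$ as first $n \to \infty$ and then $A \to \infty$. In regime (i), since $b_n \to b \in (0,\rho)$, $\phi_n$ converges uniformly on $[-\pi,\pi]$ to the characteristic function of $\xi^{(b)}$, and the non-lattice assumption yields $|\phi_n(t)| \le 1 - \delta$ outside a neighbourhood of $0$, giving exponential decay of the tail. In regime (ii), the near-Bernoulli form of $\xi^{(b_n)}$ yields an inequality of the type $|\phi_n(t)|^2 \le 1 - c\, p_n(1 - \cos t)$ with $p_n := \Pr{\xi^{(b_n)} \ne 0} \sim x_n/n$, so that $n p_n \asymp x_n \to \infty$ and the tail is controlled. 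The most delicate is regime (iii): here $b_n \to \rho$ and one cannot reduce to a single limiting characteristic function, so one exploits the $\Delta$-analyticity of $G$ via singularity analysis to obtain both a local expansion of the form $\phi_n(t) \sim (1 - \ic \rho t/(\rho - b_n))^{-\alpha}$ for $|t|$ small, and a uniform modulus bound $|\phi_n(t)| \le 1 - c \min(t^2, 1)$ for $t \in [-\pi, \pi]$ via a contour argument inside the $\Delta$-domain. Producing such uniform bounds as $b_n$ approaches the singularity is the technical heart of the theorem.
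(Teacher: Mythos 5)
Your overall strategy — Cramér tilt, reduction to a local CLT for the triangular array $S_n^{(b_n)}$, then Fourier inversion — is the one the paper follows, and your scale identifications $v_n^2 \sim n \Var(\xi^{(b_n)})$ in each regime are correct. For regime (i) you recover the known result (the paper just cites Borovkov--Borovkov). For regime (ii) you propose a direct Fourier argument with a Bernoulli-type bound $|\phi_n(t)|^2 \le 1 - c\,p_n(1-\cos t)$, which does work; the paper instead avoids the Fourier tail estimate altogether by invoking the Davis--McDonald local CLT (after checking a CLT and bounding their quantity $Q_n$ via $n\min\{\P(\xi^{(b_n)}=0),\P(\xi^{(b_n)}=1)\}$). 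Both routes are valid; yours is more self-contained, theirs is shorter.

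For regime (iii), however, there is a genuine gap in the tail estimate, and it sits precisely where you flagged the technical heart. The uniform modulus bound $|\phi_n(t)| \le 1 - c\min(t^2,1)$ with a constant $c$ independent of $n$ is too weak. Write $u=t/v_n$ and recall $v_n=\sigma_n\sqrt{n}$ with $\sigma_n = \sqrt{\Var(\xi^{(b_n)})} \sim x_n/(\sqrt{\alpha}\,n)\to\infty$. Your bound gives, on the range $A/v_n \le |u| \le 1$,
\[
v_n\int_{A/v_n}^{1}|\phi_n(u)|^n\,du \;\le\; v_n\int_{A/v_n}^{1}e^{-cnu^2}\,du \;=\;\sigma_n\int_{A/\sigma_n}^{\sqrt n}e^{-cs^2}\,ds,
\]
and since $A/\sigma_n\to0$ the right-hand side behaves like a constant times $\sigma_n$, which diverges. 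The reason is that as $b_n\to\rho$ the characteristic function becomes much more concentrated near the origin: the true small-$u$ decay is $|\phi_n(u)|\approx e^{-\sigma_n^2 u^2/2}$, whose exponent carries a factor $\sigma_n^2$, not a uniform constant. Likewise, for $|u|$ bounded away from $0$, the correct bound is not $1-c$ but $O(1/G(b_n)) = O(\varepsilon_n^{\alpha})$, obtained from $\Delta$-analyticity ($|G(b_n e^{\ic u})|$ bounded uniformly away from $u=0$) together with $G(b_n)\to\infty$. In the paper these two facts plus the polynomial bound $|\phi_n(s/\sigma_n)|\le c_\eta |s|^{-\alpha}$ in the intermediate range $\eta\le|s|\le\eta'\sigma_n$ (which is exactly what your local expansion $\phi_n(t)\sim(1-\ic t/\varepsilon_n)^{-\alpha}$ provides) constitute Lemma~\ref{lem:techn}, and the tail integral is then split into pieces matching each bound. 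So your local expansion is the right object and would carry the intermediate range, but the modulus bound you write down must be replaced by the $n$-dependent bounds; as stated, the tail estimate does not close.
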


In the second case $x_{n}/n \to 0$, it is clear that the assumption $\P(\xi=0) > 0$ is necessary as otherwise $S_{n}$ is larger than or equal to $n$. On the other hand it seems that when $\P(\xi=1) = 0$, the model is close to be $i$-aperiodic in the limit, where $i = \min\{j \ge 1 : \P(\xi=j) \ne 0\}$, so one should restrict to values of $k \in i \N$ in order to have a local limit theorem.

The  regime (iii) (large endpoint regime) naturally appears in the context  of random planar maps: we will indeed see that the model of uniform biconditioned bipartite Boltzmann planar maps discussed below is closely related to the distribution given by
$\P(\xi = k) = 2 \left(\frac{3}{16}\right)^{k+1}\binom{2k+1}{k}$ for every $k \geq 0$,
with generating function 
$G(z) = z^{-1} ((1-3z/4)^{-1/2} - 1)$ for $z \in (0, 4/3)$,
which satisfies the assumptions of Theorem~\ref{thm:LTT_large_endpoint} with $\alpha = 1/2$.

As a first application, in Sec.~\ref{sec:marches} we will use these estimates to get a functional convergence, namely that in the regimes of Theorem~\ref{thm:LTT_forme_generale_hors_stable}, the rescaled paths
\[\left( \frac{1}{\scale_{n}} \left(S_{\lfloor n t \rfloor} - x_{n} t\right)  ; 0 \le  t \le  1\right)_{n\ge 1}\]
under $\P(\, \cdot \mid S_{n}=x_{n})$ converge in distribution towards a standard Brownian bridge.

To keep this Introduction as short as possible, we have not mentioned here the case when $x_{n}/n \to \rho G'(\rho)/G(\rho)$, with the latter being finite. In this case, when $\xi$ also belongs to the domain of attraction of a stable law with index $\alpha \in (1,2]$, several different behaviours appear depending on the deviation of $x_{n}$ from $n \rho G'(\rho)/G(\rho)$. We defer the precise statement to Sec.~\ref{sec:LLT_stable}; we establish in particular in Theorem~\ref{thm:lls1} the estimate~\eqref{eq:LTT_forme_generale} when $x_{n} - n \rho G'(\rho)/G(\rho)$ divided by the usual scaling factor of the random walk, of order $n^{1/\alpha}$, tends to $-\infty$.

\subsection{Applications to biconditioned random trees}
Recall the model of Bienaym\'e--Galton--Watson plane trees, representing the genealogy of a family in which the individuals reproduce independently of each other according to a given offspring distribution. A lot of effort in the last decades has been put on understanding the scaling limits of such trees conditioned to be large. A classical conditioning involves the total number of vertices, for which the most famous results are certainly due to Aldous~\cite{Ald93} and Duquesne~\cite{Duq03}.  
Often motivated by applications to random combinatorial models, many other types of conditionings have also been considered, involving for instance the height~\cite{LG10} or the total number of leaves and more generally the total number of vertices with fixed outdegrees~\cite{Kor12,Riz15}.

One of our applications concerns random plane  trees conditioned by both their total number of vertices and leaves, say $n$ and $K_{n}$ respectively. This conditioning has been first considered by Labarbe \& Marckert~\cite{LM07} in the case of the uniform distribution (which amounts to taking a geometrical offspring distribution),
by relying on explicit counting formulas, in part motivated by applications to parallelogram polyominoes. In this work, we are interested in general biconditioned Bienaymé--Galton--Watson trees.
To the best of our knowledge, this is considered for the first time. Using new techniques, we establish scaling limits of their \L ukasiewicz path. This is the first step towards the understanding of the geometry of such trees;  see  the open question at the end of this introduction.

Indeed, this path has the same law as a random walk with increments in $\{-1, 0, 1, 2, \dots\}$, conditioned to first hit $-1$ at time $n$ \emph{and} to make $K_{n}$ negative steps in total. By applying the cycle lemma (also known as Vervaat transform) one can remove the positivity constraint and end up studying a random walk conditioned to lie at position $-1$ at time $n$ and to have made $K_{n}$ negative steps. We then split the negative and nonnegative contributions; for the reverse operation one roughly speaking `shuffles' these contributions. Indeed, the positions of the negative steps are just uniformly chosen amongst all the possibilities, which is tractable and is a well-studied framework, whilst the path obtained by removing these jumps is now a nondecreasing random walk, only conditioned to lie at the value $K_{n}-1$ at time $n-K_{n}$. This is essentially the new key idea that allows to study general offspring distributions and to transfer results from a `conditioned' setting to a `biconditioned' setting, see Lemma~\ref{lem:separation_sauts} for a precise statement. We are then in position to apply our preceding results on bridge-conditioned random walks. In each of the three regimes presented in the previous subsection, we obtain that the \L ukasiewicz path of the trees converges in distribution towards the Brownian excursion.
When the offspring distribution belongs to the domain of attraction of a stable law with index in $(1,2)$, under a fine tuning of the proportion of leaves, we obtain instead in the limit the excursion of a stable L\'evy process with no negative jump and with a (positive or negative) drift. We refer to Sec.~\ref{sec:Luka} for precise statements.

\subsection{Scaling limits of  biconditioned random planar maps}  A (planar) map is the embedding of a finite, connected multigraph in the two-dimensional sphere, viewed up to orientation-preserving homeomorphisms; we shall always root these graphs by distinguishing one oriented edge. The embedding allows to define the \emph{faces} of the map which are the connected components of the complement of the graph on the sphere, and the \emph{degree} of a face is the number of edges incident to it, counted with multiplicity: an edge incident on both sides to the same face contributes twice to its degree. See Figure~\ref{fig:bijection_arbre_carte} right for an example of a map with six faces.

Le~Gall~\cite{LG13} and Miermont~\cite{Mie13} simultaneously closed a series of works by proving that if one samples a quadrangulation (i.e.~all faces have degree $4$) with $n$ faces uniformly at random, say $m_{n}$, then its set of vertices $V(m_{n})$ equipped with the graph distance $\dgr$ rescaled by a factor $n^{-1/4}$ converges in distribution in the Gromov--Hausdorff topology to a random compact metric space called the \emph{Brownian map}. The latter is a random metric space, almost surely homeomorphic to the $2$-sphere~\cite{LGP08,Mie08} and with Hausdorff dimension $4$~\cite{LG07}. 
In this paper we shall also consider the uniform probability measure $\pgr$ and the Gromov--Hausdorff--Prokhorov topology (see e.g.~\cite[Sec.~6]{Mie09} for details on this topology).
Following this result, many other discrete models of `large' random maps have been shown to converge towards the Brownian map. Most of them focus on the technically simpler case of \emph{bipartite} maps, in which all faces have even degree, with two notable exceptions~\cite{ABA19, BJM14}. 
We will as well restrict to the bipartite case.

In Sec.~\ref{sec:cartes}, motivated by a prediction of Fusy \& Guitter~\cite{FG14},  we will be interested in random planar maps with random face degrees, conditioned to have a large fixed number of vertices, edges, and faces at the same time; by Euler's formula, there are actually only two degrees of freedom, hence the name \emph{biconditioned} planar maps. More precisely, Let $(K_{n})_{n\ge 1}$ be a sequence of integers and, in order to discard degenerate cases, we shall always assume that both $K_{n}$ and $n-K_{n}$ tend to infinity. Let us denote by $\Map_{n,K_{n}}$ the set of all rooted bipartite planar maps with $n-1$ edges and $K_{n}+1$ vertices; by Euler's formula, all maps in $\Map_{n,K_{n}}$ have $n-K_{n}$ faces. The combination of the papers~\cite{BDG04} and~\cite{JS15} relate such maps to decorated (`labelled') trees with $n$ vertices and $K_{n}$ leaves and allows to study random maps related to Bienaym\'e--Galton--Watson trees that we discussed.

Let us defer our general statements to Sec~\ref{sec:results} and focus here on the particular case of the uniform distribution.
This framework is already new and exhibits an intriguing feature: Once appropriately scaled, such random maps always converge to the Brownian map.
Specifically, for $0 < x <1$, set
\[S(x)=\frac{(1-x) (3+x+\sqrt{(1-x)(9-x)})}{12 x}.\]
Note that $S$ is continuous, decreasing, and $S(x) \sim 1/(2x)$ and $S(1-x) \sim x/3$ as $x \to 0$.

\begin{thm}\label{thm:cartes_unif}
Let $(K_{n})_{n \ge 1}$ be integers such that $K_{n} \to \infty$ and $n-K_{n} \to \infty$ and let $M_{n, K_{n}}$ be a bipartite map with $n-1$ edges and $K_{n}+1$ vertices sampled uniformly at random.  Then the convergence
\[\left(V(\map_{n,K_n}),   \left( S \left(  \frac{K_{n}}{n}\right) \frac{9}{4n} \right)^{1/4} \dgr, \pgr\right)  \cvloi (\Bmap, \dBmap, \pBmap)\]
holds in distribution for the Gromov--Hausdorff--Prokhorov topology, where $(\Bmap, \dBmap, \pBmap)$ is the standard Brownian map.
\end{thm}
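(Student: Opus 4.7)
I intend to follow the now classical four-step strategy for Brownian-map limits: first, encode a uniformly pointed version of $\map_{n,K_n}$ by a labelled plane tree; second, apply the biconditioned-tree scaling limits announced in Sec.~\ref{sec:Luka} (themselves built on Theorem~\ref{thm:LTT_forme_generale_hors_stable}); third, handle the labels by a conditional Donsker argument; and finally conclude via the Le~Gall--Miermont machinery.

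Through the composition of the Bouttier--Di Francesco--Guitter and Janson--Stef\'ansson bijections, a uniform rooted bipartite map in $\Map_{n,K_n}$ equipped with a uniformly chosen pointed vertex corresponds to a uniformly chosen labelled plane tree with $n$ vertices, of which $K_n$ are leaves; the uniform pointing introduces only a deterministic factor $K_n+1$ and so preserves the uniform marginal on $\Map_{n,K_n}$. One checks that the shape of this tree is a Bienaym\'e--Galton--Watson tree with the offspring distribution $\Pr{\xi=k} = 2(3/16)^{k+1}\binom{2k+1}{k}$ singled out after Theorem~\ref{thm:LTT_forme_generale_hors_stable}, conditioned on its total number of vertices and on its number of leaves. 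Since the associated generating function $G(z) = z^{-1}((1-3z/4)^{-1/2}-1)$ is $\Delta$-analytic with $G(\rho-z) \sim c z^{-1/2}$ at $\rho = 4/3$ and satisfies $\Pr{\xi=0}, \Pr{\xi=1} > 0$, all three regimes of Theorem~\ref{thm:LTT_forme_generale_hors_stable} are accessible, according to whether $K_n/n$ tends to $0$, to $c \in (0,1)$, or to $1$. In each of them, the \L ukasiewicz and contour paths of the conditioned tree, rescaled by an explicit factor $\scale_n \asymp n^{1/2}$ obtained via the bridge-type convergence of Sec.~\ref{sec:marches} combined with Lemma~\ref{lem:separation_sauts}, converge to a standard Brownian excursion.

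The remaining work is to put the labels back in. Conditionally on the tree, the labels around a black vertex of degree $2k$ (i.e.~around a face of the map) form a uniform random bridge of length $2k$ with increments in $\{-1, 0, 1, 2, \dots\}$ summing to $0$; its variance is of order $k$, and averaging over the degree profile of the biconditioned tree yields, through a Donsker argument along the contour order, convergence of the label process to $\sigma$ times the head of the Brownian snake driven by the limiting excursion, with $\sigma>0$ an explicit constant depending on $K_n/n$. Combined with the tree scaling, this produces a metric scaling of the form $\scale_n^{-1/2}\sigma^{1/2}$, which a direct but delicate computation matches to $(S(K_n/n)\cdot 9/(4n))^{1/4}$ in each of the three regimes. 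A standard re-rooting argument disposes of the uniform pointing and of the choice of root edge, and the Le~Gall--Miermont criterion then delivers convergence in the Gromov--Hausdorff--Prokhorov topology to $(\Bmap, \dBmap, \pBmap)$; the uniform measure $\pgr$ is automatically identified with the push-forward of Lebesgue measure on $[0,1]$ in the limit. The main obstacle, I expect, will be the analytic unification of the three regimes: the small-endpoint and large-endpoint regimes yield trees with radically different typical degree profiles, so the variance $\sigma^2$ must be computed from distinct tilted distributions $\xi^{(b_n)}$, and verifying that all three computations collapse onto the single expression $S(K_n/n) \cdot 9/(4n)$---matching in particular the asymptotics $S(x) \sim 1/(2x)$ and $S(1-x) \sim x/3$ at the two boundaries---is where most of the explicit work lies.
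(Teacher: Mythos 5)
Your plan follows the classical route to the Brownian map (BDG/JS bijection, contour function, label process, Le~Gall--Miermont criterion), but this route has a genuine gap in the present setting, and it is not the route the paper takes.

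The gap is at the point where you write that ``the \L ukasiewicz \emph{and contour} paths of the conditioned tree \ldots\ converge to a standard Brownian excursion.'' The paper proves only the \L ukasiewicz-path convergence (Theorems~\ref{thm:CVluka} and~\ref{thm:CVluka_distorted}); the contour/height function of a \emph{biconditioned} Bienaym\'e--Galton--Watson tree is explicitly \emph{not} controlled here, and is flagged in the Open questions section and in Remark~\ref{rem:Fusy_Guitter}(i) as a substantially harder problem, deferred to future work. Note also that even for uniform maps the underlying tree shape is \emph{not} uniform (Remark~\ref{rem:Fusy_Guitter}(i) again), so the Labarbe--Marckert contour result does not rescue the argument. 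Once the contour convergence is unavailable, your ``conditional Donsker argument'' for the labels, which is fed along the contour order and driven by the limiting excursion in the style of the Brownian snake, cannot get started.

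The paper's actual proof of Theorems~\ref{thm:cartes_unif} and~\ref{thm:CV_cartes} avoids the label process entirely. The key observation is that two bipartite maps with the same multiset of face degrees have the same $w^{\q}$-weight, so a biconditioned $\q$-Boltzmann map is a mixture of uniform maps with prescribed face degrees, and one can invoke Marzouk's results~\cite{Mar19} on that latter model. Those results assert that once the map is rescaled by $\sigma_N^{-1/2}$ where $\sigma_N^2=\sum_i d_{N,i}(d_{N,i}-1)$, tightness holds unconditionally, and the limit is $\sqrt{2/3}$ times the Brownian map precisely when $d_{N,1}/\sigma_N\to 0$ in probability. Via the bijection, $\sigma_{n-K_n}^2$ equals (up to an additive $-1$) the sum $\sum_{k=1}^n (X^n_k)^2$ of squared increments of the \L ukasiewicz path, and the maximal half-degree $d_{n-K_n,1}$ is one plus the maximal increment. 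This is exactly why Theorems~\ref{thm:CVluka} and~\ref{thm:CVluka_distorted} are phrased to include a law of large numbers for $\sum_k (X^n_k)^2$ (proved via Appendix~\ref{sec:preuves_LLT}) alongside the functional convergence: these two facts are all that the Marzouk criterion requires. The ``delicate analytic unification'' across the three regimes that you expect to be the main obstacle is in fact a short computation: the three deterministic scalings $v_n$ of Theorem~\ref{thm:CV_cartes} all match $\frac{4n}{9}\,S(K_n/n)^{-1}$ with $S(x)=F'(A^{-1}(x))/\bigl(A^{-1}(x)F^{(2)}(A^{-1}(x))\bigr)$ for the uniform-maps generating function $F(z)=\tfrac12+\tfrac12(1-4z)^{-1/2}$, using $S(x)\sim 1/(2x)$ and $S(1-x)\sim x/3$ at the boundaries. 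In short: replacing your ``contour + snake'' step by ``condition on the degree sequence and apply~\cite{Mar19}'' is precisely what makes the argument go through with the \L ukasiewicz-path information alone.
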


\begin{rem}\label{rem:Fusy_Guitter}
\begin{enumerate}
\item It is interesting to notice in Theorem~\ref{thm:cartes_unif} that, whilst the scaling factor depends on $K_{n}$ and $n$, the scaling limit is always the Brownian map. This theorem is in spirit similar to a result of Labarbe \& Marckert~\cite{LM07}, which shows that if $s(x)=2(1-x)/x$, then a uniformly random plane tree with $n$ edges and $K_{n}$ leaves, normalised by $(n  s(K_{n}/n))^{-1/2}$ converges to Aldous's Brownian CRT coded by the standard Brownian excursion (under the same non-degeneracy assumption $K_{n} \to \infty$ and $n-K_{n} \to  \infty$). 
Our work has two main differences:  first, we do not restrict to the uniform distribution; as a matter of fact, even for uniform maps, non-uniform plane trees appear because of bias due to the labels.
Second, we only focus on the \L ukasiewicz path of trees, and not on their contour function.

\item Theorem~\ref{thm:cartes_unif} confirms a prediction of Fusy \& Guitter~\cite[Sec.~6]{FG14} that typical distances in uniform random maps with $n$ edges and $n^{b}$ faces are of order $n^{(2-b)/4}$, whereas distances in uniform random maps with $n$ edges and $n^{c}$ vertices are of order $n^{c/4}$; furthermore our result shows that  in both cases the limit is the Brownian map.

\item Finally, Abraham~\cite{Abr16} proved that bipartite maps with $n$ edges sampled uniformly random converge in distribution to the Brownian map once rescaled by $(2n)^{-1/4}$. Since a map with $n$ edges typically has about $2n/3$ vertices (see e.g.~\cite[Sec.~6]{Abr16}), and $S(2/3) = 2/9$, then this is consistent with our result.
Note that for general, possibly non-bipartite, uniformly random maps with $n$ edges, the scaling constant is different, see~\cite{BJM14}.
\end{enumerate}
\end{rem}

When the trees have an offspring distribution in the domain of attraction of a stable law, we shall see that the corresponding biconditioned maps, under a fine tuning of the number of vertices, converge \emph{after extraction of a subsequence}. These new metric spaces, which we call $(\alpha, \lambda)$-\emph{skewed-stable maps}, are formed by a two-parameter family with $\alpha \in (1,2)$ and $\lambda\in\R$ and will be the main subject of investigation of the companion paper~\cite{dimensions}. They are close in spirit to the so-called stable maps of Le~Gall \& Miermont~\cite{LGM11} (which would be obtained for $\lambda=0$), and are constructed as there with the difference that there is an additional (positive or negative) drift to the (decorated) stable L\'evy process with no negative jump.  In~\cite{dimensions}, we actually consider a much broader class of spectrally positive L\'evy processes and we identify the Hausdorff, packing, and Minkowski dimensions of the associated metric spaces. In the particular case of the skewed-stable maps that appear here, as in~\cite{LGM11}, their dimensions are all equal to $2\alpha$ almost surely, for every value of $\lambda$. Finally,  we believe that for each $\alpha$ fixed, this family interpolates between the Brownian map and the Brownian \emph{tree} in the sense that, suitably rescaled, the skewed-stable maps converge in distribution towards the former as $\lambda\to-\infty$ and to the latter as $\lambda\to\infty$.

\subsection{Open questions}
This work leaves open several questions that we plan to investigate, let us briefly present two of them. 
First, from the point of view of scaling limits, several regimes are yet not covered, especially the stable regime with index in $(0, 1]$. When the index equals $1$, i.e.~in the Cauchy regime, trees~\cite{KR19} and maps~\cite{Mar19} conditioned only by their number of edges fall into a condensation regime but we believe that, as here, if one forces them to have less leaves and vertices respectively than usual, one can get different limits. Our techniques still apply and one only needs appropriate local limit estimates as those in Sec.~\ref{sec:LLT}.

Another direction of research focuses on biconditioned Bienaym\'e--Galton--Watson trees.
It is natural to ask for analogous results to our main results on maps for such trees. Only the uniform distribution has been considered so far, by Labarbe \& Marckert~\cite{LM07}; the point is that the analysis of maps only relies on the \L ukasiewicz path of the trees, whereas the convergence of the trees requires to study their height process which is much more involved for general Bienaym\'e--Galton--Watson trees. Nonetheless our results on the \L ukasiewicz paths (Theorems~\ref{thm:CVluka} and~\ref{thm:CVluka_distorted}) are the first step towards understanding the behaviour of the height processes and we plan to study more these trees in the near future.

\subsection{Plan of the paper}
This paper is organised as follows. First, in Sec.~\ref{sec:LLT} we focus on the local limit estimates: we prove Theorem~\ref{thm:LTT_forme_generale_hors_stable} and state and prove similar estimates in the stable regimes. 
Then in Sec.~\ref{sec:marches} we apply these results to obtain invariance principles for nondecreasing random walk bridges.
In Sec.~\ref{sec:arbres} we discuss application to biconditioned random trees and invariance principles for their \L ukasiewicz paths in Sec.~\ref{sec:Luka} by relating them with nondecreasing bridges as discussed above.
Then in Sec.~\ref{sec:cartes} we first present the general model of Boltzmann planar maps, and then precisely state our results pertaining to biconditioned maps in Sec.~\ref{sec:results}, whose proof readily follows from the results on trees. Finally we defer to Appendix~\ref{sec:preuves_LLT} some technical proofs on random bridges needed for the random maps.

\paragraph{Acknowledgement.} We warmly thank Lo\"ic Richier for stimulating discussions, as well as Mickaël Maazoun for discussions at early stages of this work.

\section{Local Limit Theorems}
\label{sec:LLT}

In this section, we first prove Theorem~\ref{thm:LTT_forme_generale_hors_stable} (ii) and (iii) (recall that (i) is already known, see~\cite[Theorem~6.1.5]{BB08}) and then state and prove an analogous result for walks attracted to a stable law as alluded to in the introduction.
Recall the framework of Sec.~\ref{sec:intro_LLT}:  $\xi$ is a random variable supported by $\Z_{\ge0}$, and, in order to avoid periodicity issues, we  assume throughout that its support is not included in a sublattice of $\Z$. We consider a random walk, say $S_{n} = \xi_{1} + \dots + \xi_{n}$ for every $n \ge 1$, where $(\xi_{i})_{i\ge 1}$ is a sequence of i.i.d. copies of $\xi$.
Recall that we denote by $G(s) = \sum_{k \ge 0} s^{k} \P(\xi=k)$ the generating function of $\xi$, and we let $\rho \ge 1$ denote its radius of convergence. For every $b \in (0, \rho)$, let $\xi^{(b)}$ have the law with generating function $G^{(b)}(s) = G(bs)/G(b)$, so $\E[\xi^{(b)}] = b G'(b)/G(b)$ and let $S^{(b)}$ denote the associated random walk.

Recall that we aim at estimating probabilities of the form $\P(S_{n} = x_{n}+ k)$ for a given a sequence $x_{n} \to \infty$; by~\eqref{eq:tilt} we may replace $S_{n}$ by $S^{(b_{n})}$ with $b_{n}$ such that $\E[\xi^{(b_{n})}] = b_{n} G'(b_{n})/G(b_{n}) = x_{n}/n$ (provided it exists) so that the event of taking value $x_{n}$ at time $n$ is more `typical'.
The following expressions of the moments of $\xi^{(b_{n})}$ will be useful in this section and in the next ones:
\begin{equation}\label{eq:moments}
\begin{split}
&\Es{\xi^{(b_{n})}} = \frac{b_{n} G'(b_{n})}{G(b_{n})} =  \frac{x_{n}}{n},
\qquad\qquad
\Es{(\xi^{(b_{n})})^{2}} = \frac{b_{n}^{2} G^{(2)}(b_{n}) + b_{n} G'(b_{n})}{G(b_{n})},
\\
&\Es{(\xi^{(b_{n})})^{3}} = \frac{b_{n}^{3} G^{(3)}(b_{n}) + 3 b_{n}^{2} G^{(2)}(b_{n}) - 2 b_{n} G'(b_{n})}{G(b_{n})},
\\
&\Es{(\xi^{(b_{n})})^{4}} = \frac{b_{n}^{4} G^{(4)}(b_{n}) + 6 b_{n}^{3} G^{(3)}(b_{n}) - 11 b_{n}^{2} G^{(2)}(b_{n}) + 6 b_{n} G'(b_{n})}{G(b_{n})}.
\end{split}
\end{equation}

\subsection{The small endpoint regime}
\label{sec:small_endpoint}

In this subsection we consider the case $x_{n}/n \to 0$ and we prove Theorem~\ref{thm:LTT_small_endpoint}: as soon as $\Pr{\xi=0}>0$, $\Pr{\xi=1}>0$, if $b_{n}$ is such that $b_{n} G'(b_{n})/G(b_{n}) = x_{n}/n$ then we have
\[\sup_{k \ge -x_{n}} \left| \sqrt{x_{n}} \cdot \frac{b_{n}^{x_{n}+k}}{G(b_{n})^{n}} \cdot \Pr{{S}_{n} = x_{n}+k} -   \frac{1}{\sqrt{2\pi}}\exp\left(-\frac{k^{2} }{2 x_{n}}\right)\right| \cv 0.\]

\begin{proof}[Proof of Theorem~\ref{thm:LTT_small_endpoint}]
Using~\eqref{eq:tilt}, let us rewrite the claim as
\begin{equation}\label{eq:mq}
\sup_{k  \in \Z} \left|  \sqrt{x_{n}} \cdot \Pr{{S}_{n}^{(b_{n})} = x_{n}+k} -   \frac{1}{\sqrt{2\pi}}\exp\left(-\frac{k^{2} }{2 x_{n}}\right)\right| \cv 0.
\end{equation}
One could mimic the proof of the standard local limit theorem, but we prefer a shorter and less technical approach based on~\cite[Theorem~1.2]{DM95}. 
We first need to check that a central limit theorem holds. Recall (see e.g.~\cite[Lemma~3.3.7]{Dur10}) that if $Y$ is a random variable with a finite third moment, then
\[\left|\Es{\e^{\ic Y}} - \left(1 + \ic \E[Y] - \frac{1}{2} \E[Y^{2}]\right)\right| \le \frac{1}{6} \E[|Y|^{3}].\]
To estimate the moments of $S_{n}^{(b_{n})}$, 
set $p(0)=\Pr{\xi=0}$ and $p(1)=\Pr{\xi=1}$ and observe that since we assume $\poids(1) > 0$, then, for every $i \ge 2$, as $x \to 0$,
\begin{equation}\label{eq:derivees_superieures_G}
x G'(x) \sim \poids(1) x
\qquad\text{and}\qquad
x^{i} G^{(i)}(x) = O(x^{i}).
\end{equation}
Then by definition of $b_{n}$,
\[b_{n} \equi \frac{\poids(0)}{\poids(1)} \frac{x_{n}}{n},\]
and it follows from~\eqref{eq:moments} that 
\[\E\big[(\xi^{(b_n)})^{2}\big] = \frac{b_{n}^{2} G^{(2)}(b_{n}) + b_{n} G'(b_{n})}{G(b_{n})}
= \frac{x_{n}}{n} \left(\frac{b_{n}^{2} G^{(2)}(b_{n})}{b_{n} G'(b_{n})} + 1\right)
\equi \frac{x_{n}}{n},\]
and
\[\E\big[(\xi^{(b_n)})^{3}\big] 
= O\left(\frac{x_{n}}{n}\right).\]
Hence, for any $u \in \R$,
\[\Es{\e^{\ic u \xi^{(b_n)}}}  = 1 + \ic \frac{x_{n} u}{n} - \frac{x_{n} u^{2}}{2n} + o\left(\frac{x_{n}}{n}\right),\]
which implies that\[\Es{\exp\left(\ic u \frac{{S}^{(b_{n})}_{n} - x_{n}}{\sqrt{x_{n}}}\right)}
\cv \exp\left(- \frac{u^{2} t}{2}\right).\]
Next, in the notation from~\cite{DM95}, for every $n \ge 1$, we have $Q_{n} = n \sum_{k \in \Z} \min\{\P(\xi^{(b_n)} = k), \P(\xi^{(b_n)} = k+1)\} \ge n \min\{\P(\xi^{(b_n)} = 0), \P(\xi^{(b_n)} = 1)\}$. Observe that
\[\P(\xi^{(b_n)} = 0) = \frac{\poids(0)}{G(b_{n})} \cv 1,\]
and
\[\P(\xi^{(b_n)} = 1) = \frac{b_{n} \poids(1)}{G(b_{n})} \equi \frac{\poids(1)}{\poids(0)} b_{n} \equi \frac{x_{n}}{n}.\]
Therefore $\limsup_{n} x_{n} / Q_{{n}} < \infty$ and Theorem~1.2 in~\cite{DM95}  implies~\eqref{eq:mq}. This completes the proof.
\end{proof}

\subsection{The large endpoint regime}
\label{sec:large_endpoint}

We next turn to Theorem~\ref{thm:LTT_large_endpoint}; we henceforth assume that $G$ is $\Delta$-analytic and that there exist $c, \rho, \alpha >0$ such that
$G(\rho-z) \sim c z^{-\alpha}$ as $z \to 0$ with $\mathrm{Re}(z)>0$.
We let $x_{n}$ be such that $x_{n}/n \to \infty$ and let $b_{n}$ be such that $b_{n} G'(b_{n})/G(b_{n}) = x_{n}/n$ and we show that
\[\sup_{k \ge -x_{n}} \left|\frac{x_{n}}{\sqrt{\alpha n}} \cdot  \frac{b_{n}^{x_{n}+k}}{G(b_{n})^{n}}\Pr{{S}_{n} = x_{n}+k} -    \frac{1}{\sqrt{2\pi}}\exp\left(- \frac{\alpha n}{x_{n}^{2}}  k^{2}\right)\right| \cv 0.\]
We shall do so by adapting the classical proof of the local limit estimate: we first establish a central limit theorem and then use the inverse Fourier transform, cut the integrals and bound each piece appropriately.

Let us start with some estimates. First, by $\Delta$-analyticity, for every $k \ge 1$, if $G^{(k)}$ denotes the $k$'th derivative of $G$, then (see e.g.~\cite[Theorem VI.8]{FS09})
\begin{equation}\label{eq:Gderiv}
G^{(k)}(\rho-z)   \quad \mathop{\sim}_{\substack{z \to 0\\ \mathrm{Re}(z)>0}} \quad  \alpha (\alpha+1) \cdots (\alpha+k-1) \frac{c}{z^{\alpha+k}}.
\end{equation}
Writing  $b_{n} = \rho (1 - \varepsilon_{n})$, by definition of $b_{n}$ we have
\begin{equation}\label{eq:b_n_negative_index}
b_{n} = \frac{x_{n}}{n} \frac{G(\rho (1 - \varepsilon_{n}))}{G'(\rho (1 - \varepsilon_{n}))}
\sim \frac{x_{n}}{n} \frac{c (\rho\varepsilon_{n})^{-\alpha}}{\alpha c (\rho\varepsilon_{n})^{-(\alpha+1)}} =
\frac{x_{n}}{n} \frac{\rho\varepsilon_{n}}{\alpha}.
\end{equation}
Since $b_{n} \to \rho$, we conclude that
$\varepsilon_{n} \sim \alpha n / x_{n}$.

Set ${\sigma}_{n}^{2} = \Var(\xi^{(b_n)})$. Recalling from~\eqref{eq:moments} the expression of the first moments of $\xi^{(b_n)}$, we  infer that as $n \to \infty$,
\begin{equation}\label{eq:moments2-new}
\E\big[(\xi^{(b_n)})^{2}\big] 
\sim \frac{\alpha(\alpha+1)}{\varepsilon_{n}^{2}}
\sim \frac{\alpha+1}{\alpha} \frac{x_{n}^{2}}{n^{2}}
\qquad\text{so}\qquad
{\sigma}_{n}^{2} \sim \frac{1}{\alpha} \frac{x_{n}^{2}}{n^{2}}
,\end{equation}
as well as
\begin{equation}\label{eq:moments3-new}
\E[(\xi^{(b_n)})^{3}] 
\sim \frac{\alpha (\alpha+1) (\alpha+2)}{\varepsilon_{n}^{3}}
\sim \frac{(\alpha+1) (\alpha+2)}{\alpha^{2}} \frac{x_{n}^{3}}{n^{3}}
.\end{equation}
Observe that
\begin{equation}\label{eq:limite_moments23-new}
\frac{\E[(\xi^{(b_n)})^{2}]}{{\sigma}_{n}^{2}} \cv \frac{\alpha+1}{2}
\qquad\text{and}\qquad
\frac{\E[(\xi^{(b_n)})^{3}]}{{\sigma}_{n}^{3}} \cv \frac{(\alpha+1) (\alpha+2)}{\sqrt{\alpha}}
.\end{equation}

Finally, to simplify notation, let $\phi_{n}(t)=\E[\exp(\ic t \xi^{(b_n)})]$ denote the characteristic function of $\xi^{(b_n)}$.

\begin{lem}\label{lem:techn}
The following assertions hold.
\begin{enumerate}
\item For every $\eta'>0$, for every $\eta' \sigma_{n} \le  |s| \le  \pi {\sigma}_{n}$, for every $n$ sufficiently large, $|\phi_{n}(s/{\sigma}_{n})| \leq 1/\sigma_{n}$.
\item There exists $\eta'>0$ such that for every $\eta >0$, there exists $c_{\eta} \in (0,\eta^{\alpha})$  such that for every $n$ sufficiently large, for every $\eta  \le  |s| \le  \eta' {\sigma}_{n}$ we have $|\phi_{n}(s/{\sigma}_{n})| \le c_{\eta} |s|^{-\alpha}$.
\item There exist $\eta,c_{1}>0$ such that for every $0 \le  |s| \le  \eta$ and $n \ge  1$ we have $|\phi_{n}(s / {\sigma}_{n})| \le \exp\left(- c_{1} s^{2}\right)$.
\end{enumerate}
\end{lem}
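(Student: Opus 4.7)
My plan is to Taylor-expand the \emph{centred} characteristic function $\psi_n(u) := \Es{\exp(\ic u(\xi^{(b_n)} - x_n/n))}$, which has the same modulus as $\phi_n(u)$. The pointwise inequality $|\e^{\ic y} - 1 - \ic y + y^2/2| \le |y|^3/6$ gives, after taking expectations,
\[\left|\psi_n(s/\sigma_n) - \left(1 - \frac{s^2}{2}\right)\right| \le \frac{|s|^3}{6} \cdot \frac{m_3^{(n)}}{\sigma_n^3},\]
with $m_3^{(n)} := \Es{|\xi^{(b_n)} - x_n/n|^3}$. By Cauchy--Schwarz, $m_3^{(n)} \le \sigma_n \sqrt{\mu_4^{(n)}}$, where $\mu_4^{(n)}$ is the fourth central moment; using the formulas in~\eqref{eq:moments} together with~\eqref{eq:Gderiv} one readily sees that $\mu_4^{(n)} = O((x_n/n)^4)$, so that $m_3^{(n)}/\sigma_n^3$ is bounded uniformly in $n$. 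For $|s|$ small enough the right-hand side is $\le s^2/4$, which yields $|\psi_n(s/\sigma_n)| \le 1 - s^2/4 \le \exp(-s^2/4)$, hence (iii) with $c_1 = 1/4$.

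\textbf{Parts (i) and (ii), the intermediate and large $s$ bounds.} Both rest on the identity $\phi_n(t) = G(b_n \e^{\ic t})/G(b_n)$ and on the singular behaviour $G(\rho - z) \sim c z^{-\alpha}$ granted by $\Delta$-analyticity. A direct expansion gives $|\rho - b_n \e^{\ic t}|^2 = \rho^2 \varepsilon_n^2 + 2\rho b_n(1 - \cos t)$, whence $|\rho - b_n \e^{\ic t}| \sim \rho \sqrt{\varepsilon_n^2 + t^2}$ as $t, \varepsilon_n \to 0$. Combined with the asymptotic for $G$, this gives, uniformly for $|t|$ small,
\[|\phi_n(t)| \le (1 + o(1)) \left(1 + \frac{t^2}{\varepsilon_n^2}\right)^{-\alpha/2}.\]
Setting $t = s/\sigma_n$ and using $\sigma_n \varepsilon_n \to \sqrt{\alpha}$ (which follows from~\eqref{eq:moments2-new} combined with $\varepsilon_n \sim \alpha n/x_n$ in~\eqref{eq:b_n_negative_index}), this becomes $|\phi_n(s/\sigma_n)| \le (1 + o(1))(1 + s^2/\alpha)^{-\alpha/2}$. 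For $|s| \ge \eta$ the right-hand side is $\lesssim |s|^{-\alpha}$, proving (ii) once $\eta'$ is chosen small enough that the range $[\eta, \eta' \sigma_n]$ lies entirely in the asymptotic regime of $G$. For the range of (i), namely $|t| \in [\eta', \pi]$, the point $\rho \e^{\ic t}$ lies strictly inside the $\Delta$-domain (since for $t \in (0, \pi]$ one has $\arg(\rho \e^{\ic t} - \rho) \in [\pi/2, \pi]$, which is bounded away from $\pm\phi$), so $G$ is uniformly bounded on a compact neighbourhood; combined with $G(b_n) \to \infty$, this gives $|\phi_n(t)| \to 0$ at the rate needed to conclude (i).

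\textbf{Main obstacle.} The delicate step is the fine tuning in (ii) to ensure the constant $c_\eta$ lies strictly below $\eta^\alpha$: this forces careful control of the $(1 + o(1))$ error in the singularity analysis of $G$ uniformly over $[\eta, \eta' \sigma_n]$, and the threshold $\eta'$ must be small enough that the small-$s$ bound of (iii) matches continuously with the $|s|^{-\alpha}$ bound of (ii) at the junction $|s| = \eta$.
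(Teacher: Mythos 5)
Your strategy matches the paper's overall. Part (i) is essentially the paper's argument. Part (iii) takes a mildly different route: the paper expands $|\phi_n(t)|^2 = \phi_n(t)\overline{\phi_n(t)}$ so that the imaginary linear term cancels automatically, whereas you expand the centred characteristic function directly and control the third absolute central moment via Cauchy--Schwarz against the fourth central moment; both are valid, and your claim $\mu_4^{(n)} = O((x_n/n)^4)$ does follow from~\eqref{eq:moments} and~\eqref{eq:Gderiv}.

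Part (ii), however, has a genuine gap. You arrive at $|\phi_n(s/\sigma_n)| \le (1+o(1))(1 + s^2/\alpha)^{-\alpha/2}$ -- exactly where the paper arrives as well -- but then only assert that the right-hand side is ``$\lesssim |s|^{-\alpha}$'', flagging the requirement $c_\eta < \eta^\alpha$ as a ``main obstacle'' without resolving it. This is not a loose end one can postpone: the strict inequality $c_\eta < \eta^\alpha$ is precisely what the proof of Theorem~\ref{thm:LTT_large_endpoint} consumes when it bounds $|I^{(3)}_{\eta,\eta'}(x,n)|$ by $2\sqrt{n}\,c_\eta^n /\bigl((\alpha n - 1)\eta^{\alpha n - 1}\bigr)$; one needs $(c_\eta/\eta^\alpha)^n \to 0$, so a generic ``$\lesssim$'' gives nothing. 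Note also that a careless reduction will not deliver it: factoring $(1 + s^2/\alpha)^{-\alpha/2} = |s|^{-\alpha}\bigl(\alpha^{-1} + s^{-2}\bigr)^{-\alpha/2}$ and then dropping the $s^{-2}$ term yields only the constant $\alpha^{\alpha/2}$, which does not shrink with $\eta$. The paper instead works with a two-sided sandwich $c(1+\delta)^{-1}|z|^{-\alpha} \le |G(\rho - z)| \le c(1+\delta)|z|^{-\alpha}$ together with the elementary lower bound $|1 - (1-\varepsilon_n)\e^{\ic t}|^2 \ge t^2/(1+\delta) + \varepsilon_n^2$, carries the $\delta$ and the $s^{-2}$ term all the way through to an explicit $c_\eta = (1+\delta)^2\bigl(\tfrac{1}{(1+\delta)^2\alpha^2} + \tfrac{1}{\eta^2}\bigr)^{-\alpha/2}$, and then checks that this is strictly below $\eta^\alpha$ (first at $\delta = 0$, using $1/\alpha^2 > 0$, then for small $\delta$ by continuity). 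Some explicit bookkeeping of this sort is required to complete your argument.
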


\begin{proof}
Recall that for every $s \in \R$, we have $\phi_{n}(s) = G(b_{n} \e^{\ic s}) / G(b_{n})$ with $b_{n} = \rho (1 - \varepsilon_{n})$,  so by our assumption, $G(b_{n}) \sim c (\rho \varepsilon_{n})^{-\alpha}$.
Moreover, by combining~\eqref{eq:b_n_negative_index} and~\eqref{eq:moments2-new}, we obtain $\varepsilon_{n} \sim \sqrt{\alpha} / {\sigma}_{n}$, hence $G(b_{n}) \sim c (\rho \sqrt{\alpha})^{-\alpha} \sigma_{n}^{\alpha}$. Also, since $G$ is $\Delta$-analytic, the convergence $|G(b_{n} \e^{\ic t})| \to |G(\rho \e^{\ic t})|$ holds uniformly for $t$ in compact subsets of $(-\pi,\pi) \backslash \{0\}$, so there exists a constant $c_{0}>0$ such that 
for every $\eta'>0$, for every $\eta' {\sigma}_{n} \leq |s| \leq \pi \sigma_{n}$, and for $n$ sufficiently large, $|\phi_{n}(s/{\sigma}_{n})| \leq c_{0}/\sigma_{n}^{\alpha} \leq 1/\sigma_{n}$.

Let us turn to the second claim. Fix $\delta>0$. There exists $\eta'>0$ such that for any $z$ such that $|z| < 2\eta'$ and $\mathrm{Re}(z)>0$, it holds that $c(1+\delta)^{-1} |z|^{-\alpha} \le |G(\rho-z)| \le c(1+\delta) |z|^{-\alpha}$. Therefore, for every $|t| < \eta'$, for every $n$ large enough,
\[|\phi_{n}(t)|
= \left|\frac{G(b_{n} \e^{\ic t})}{G(b_{n})}\right|
\le \frac{c(1+\delta) |\rho - b_{n} \e^{\ic t}|^{-\alpha}}{c (1+\delta)^{-1} (\rho \varepsilon_{n})^{-\alpha}}
= (1+\delta)^{2} \left(\frac{|1 - (1 - \varepsilon_{n}) \e^{\ic t}|}{\varepsilon_{n}}\right)^{-\alpha}
.\]
Straightforward calculations show that if $\eta'$ is small enough, then for every $t \in [-\eta', \eta']$,
\[|1 - (1 - \varepsilon_{n}) \e^{\ic t}|^{2}
= 1 - 2 (1 - \varepsilon_{n}) \cos t + (1 - \varepsilon_{n})^{2}
= 2 (1-\cos t) (1 - \varepsilon_{n}) + \varepsilon_{n}^{2}
\ge \frac{t^{2}}{1+\delta} + \varepsilon_{n}^{2}
.\]
Moreover, combining~\eqref{eq:b_n_negative_index} and~\eqref{eq:moments2-new}, we obtain $\varepsilon_{n}^{2} \sim \alpha / {\sigma}_{n}^{2}$, therefore for $|t| < \eta'$, for every $n$ large enough,
\[|\phi_{n}(t)|
\le (1+\delta)^{2} \left(\frac{|1 - (1 - \varepsilon_{n}) \e^{\ic t}|^{2}}{\varepsilon_{n}^{2}}\right)^{-\alpha/2}
\le (1+\delta)^{2} \left(\frac{(t {\sigma}_{n})^{2}}{(1+\delta)^{2} \alpha^{2}} + 1\right)^{-\alpha/2}
.\]
We infer that for every $\eta \le |s| \le \eta' {\sigma}_{n}$,
\[|\phi_{n}(s/{\sigma}_{n})|
\le (1+\delta)^{2} \left(\frac{1}{(1+\delta)^{2} \alpha^{2}} + \frac{1}{\eta^{2}}\right)^{-\alpha/2} |s|^{-\alpha}
.\]
For $\delta = 0$, the right-hand side is smaller than $\eta^{\alpha} |s|^{-\alpha}$ so by continuity, it remains true for $\delta>0$ small enough and the first claim follows.

Let us finally turn to the third claim. As in the proof of Theorem~\ref{thm:LTT_small_endpoint} (see e.g.~\cite[Lemma~3.3.7]{Dur10}), we start  with writing
\begin{equation}\label{eq:approx_phi_moments-new}
\phi_{n}(t) = 1 + \ic \E[\xi^{(b_n)}] t - \E[(\xi^{(b_n)})^{2}] \frac{t^{2}}{2} + R^{n}_{1}(t)
\qquad\text{with}\qquad
|R^{n}_{1}(t)| \le \frac{|t|^{3}}{6} \E[|\xi^{(b_n)}|^{3}].
\end{equation}
Consequently, by writing $|\phi_{n}(t)|^{2} = \phi_{n}(t) \overline{\phi_{n}(t)}$, we infer that
\begin{equation}\label{eq:approx_phi_carre_moments-new}
\begin{split}
&\big|\phi_{n}(t)\big|^{2} = 1 - \Var(\xi^{(b_n)}) t^{2} + |R^{n}_{2}(t)|,
\\
\text{where}\qquad 
&|R^{n}_{2}(t)| = \left(\E[(\xi^{(b_n)})^{2}] \frac{t^{2}}{2}\right)^{2} + 2\left(1 - \E[(\xi^{(b_n)})^{2}] \frac{t^{2}}{2}\right) |R^{n}_{1}(t)| + |R^{n}_{1}(t)|^{2}
.\end{split}
\end{equation}
Combining~\eqref{eq:approx_phi_carre_moments-new} with~\eqref{eq:approx_phi_moments-new} and~\eqref{eq:limite_moments23-new} we see that $|R^{n}_{2}(t/{\sigma}_{n})| = O(|t|^{3})$ so there exist $\eta,c_{1}>0$ such that for every $0 \le  |t| \le  \eta$ and $n \ge  1$, we have $|R^{n}_{2}(t/{\sigma}_{n})| \le (1-2c) t^{2}$. Consequently, appealing to~\eqref{eq:approx_phi_carre_moments-new} again, for every $0 \le  |t| \le  \eta$ and $n \ge  1$,
\[\ln \big|\phi_{n}(t/{\sigma}_{n})\big| 
\le \frac{1}{2} \ln\left(1 - t^{2} + |R^{n}_{2}(t/{\sigma}_{n})|\right)
\le \frac{1}{2} \ln\left(1 - 2c_{1} t^{2}\right)
\le -c_{1} t^{2}
,\]
and the third claim follows.
\end{proof}

We are now in position to establish Theorem~\ref{thm:LTT_large_endpoint}.

\begin{proof}[Proof of Theorem~\ref{thm:LTT_large_endpoint}]
By~\eqref{eq:tilt}, our aim is to show that 
\[\sup_{k \in \Z} \left|\frac{x_{n}}{\sqrt{\alpha n}} \cdot \Pr{{S}_{n}^{(b_{n})} = x_{n}+k} -    \frac{1}{\sqrt{2\pi}}\exp\left(- \frac{\alpha n}{x_{n}^{2}}  k^{2}\right)\right| \cv 0.
\]
Recall the approximation~\eqref{eq:approx_phi_moments-new}, then by~\eqref{eq:moments2-new} and~\eqref{eq:moments3-new}, there is a constant $C >0$ such that $|R^{n}_{1}(t/(\sigma_{n} \sqrt{n}))| \le  C |t|^{3}/n^{3/2}$ for every $t \in \R$,  $n \ge  1$.  Since $ n  R^{n}_{1}(t/(\sigma_{n}\sqrt{n})) \to 0$  uniformly for $t$ in compact subsets of $\R$, it readily follows that
\begin{equation}\label{eq:cv1b}
\Es{\exp\left(\ic t \frac{{S}_{n}^{(b_{n})}-x_{n} }{\sigma_{n} \sqrt{n}}\right)}  \cv \exp\left(- \frac{t^{2}}{2}\right),
\end{equation}
uniformly for $t$ in compact subsets of $\R$.

We then follow the steps of the analytic proof of the standard local limit theorem for a sequence of independent identically distributed random variables. The main difficulty is that the distribution of $\xi^{(b_n)}$ depends on $n$. 
As in the preceding subsection, to simplify notation, set $f(t)= \e^{- t^{2}/2}$ for $t \in \R$, so by Fourier inversion, the Gaussian density reads
\[p(x)= \frac{1}{2\pi} \int_{-\infty}^{\infty} \e^{- \ic t x} f(t) {\d}t,
\qquad x \in \R.\]
Also, for $k \in \Z$,
\[ \Pr{{S}_{n}^{(b_{n})}=k} = \frac{1}{2\pi} \int_{-\pi}^{\pi} \e^{- \ic t k}\phi_{n}(t)^{ n} \d t.\]
In the following, we only consider values of $x \in \R$ such that $x_{n}+x  \sigma_{n}\sqrt{n}$ is an integer.
For such $x$,
\[ \sigma_{n}\sqrt{n} \cdot \Pr{{S}_{n}^{(b_{n})}= x_{n}+x  \sigma_{n}\sqrt{n}}=  \frac{1}{2\pi} \int^{\pi \sigma_{n}\sqrt{n}}_{-\pi \sigma_{n}\sqrt{n}} \e^{- \ic t x} \left( \phi_{n} \left(  \frac{t}{\sigma_{n}\sqrt{n}} \right)   \e^{- \ic t  \frac{1}{ \sigma_{n}{n}^{3/2}}} \right)^{n} \d t.\]
Therefore for every fixed $A, \eta>0$, for $n$ sufficiently large,
the difference
\[ \left| \sigma_{n}\sqrt{n} \Pr{{S}_{n}^{(b_{n})}= x_{n}+x  \sigma_{n}\sqrt{n}} -f( x)\right|\]
is upper bounded by $1/(2 \pi)$ times the modulus of the sum of the following five integrals:
\begin{align*}
I^{(1)}_{A}(x,n) &= \int_{-A}^{A} \e^{- \ic  t x} \left(   \left(  \phi_{n} \left(  \frac{t}{\sigma_{n}\sqrt{n}} \right)   \e^{- \ic t  \frac{1}{ \sigma_{n}{n}^{3/2}}} \right)^{n}- f(t) \right) \d t,
\\
I^{(2)}_{A,\eta}(x,n) &= \int_{A<|t|< \eta \sqrt{n}} \e^{- \ic t x - \ic t \frac{1}{ \sigma_{n}\sqrt{n}}} \phi_{n} \left(  \frac{t}{\sigma_{n}\sqrt{n}} \right) ^{ n}\d t,
\\
I^{(3)}_{\eta,\eta'}(x,n) &= \int_{\eta \sqrt{n}<|t|< \eta' \sigma_{n} \sqrt{n}} \e^{- \ic t x - \ic t \frac{1}{ \sigma_{n}\sqrt{n}}} \phi_{n} \left(  \frac{t}{\sigma_{n}\sqrt{n}} \right) ^{ n}\d t,
\\
I^{(4)}_{\eta'}(x,n) &= \int_{\eta' \sigma_{n} \sqrt{n}<|t|< \pi \sigma_{n}\sqrt{n}} \e^{- \ic t x - \ic t \frac{1}{ \sigma_{n}\sqrt{n}}} \phi_{n} \left(  \frac{t}{\sigma_{n}\sqrt{n}} \right) ^{ n}\d t,
\\
I^{(5)}_{A}(x) &= \int_{|t|>A} \e^{- \ic t x} f(t) \d t.
\end{align*}
We shall check that for every fixed $\epsilon>0$, there exist $A,\eta>0$ such that for every $n$ sufficiently large, for every $x \in \R$ (with the above integrality condition),
the modulus of each of the five integrals is less than $\epsilon$.

 We choose $\eta,\eta',c_{1},c_{\eta}>0$ such that the conclusions of Lemma~\ref{lem:techn} hold and then $A >0$ such that
\begin{equation}\label{eq:Ab}
2 \int_{A}^{\infty } \e^{- t^{2}/2} {\d}t < \epsilon \qquad \textrm{ and }  \qquad 2 \int_{A}^{\infty} \e^{-c_{1} t^{2} }\d t <  \epsilon.
\end{equation}

\emph{Bounding $|I^{(1)}_{A}(x,n)|$:} Since the convergence~\eqref{eq:cv1b} holds  uniformly on compact subsets of $\R$, then for   $n$ sufficiently large and $x \in \R$, the modulus of the integrand tends to $0$ as $n \to \infty$ uniformly in $x \in \R$, so indeed $|I^{(1)}_{A}(x,n)| \le  \epsilon$ for $n$ large enough.

 \emph{Bounding $|I^{(2)}_{{A},\eta }(x,n)|$:}  By Lemma~\ref{lem:techn}(iii), we have  for   $n$ sufficiently large and any $x \in \R$,
 \[|I^{(2)}_{{A},\eta }(x,n)| \le  \int_{A < |t| < \eta  \sqrt{n}} \e^{-c_{1} n (\frac{t}{ \sqrt{n}})^{2}} \d t  
 \le 2  \int_{A}^{\infty} \e^{-c_{1} {t^{2} }{} } \d t,  \]
and the last integral is less than $\epsilon$ by~\eqref{eq:Ab}.

  \emph{Bounding $|I^{(3)}_{\eta,\eta'}(x,n)|$:} By Lemma~\ref{lem:techn}(ii),  for   $n$ sufficiently large and $x \in \R$,
  \[|I^{(3)}_{\eta,\eta'}(x,n)| \le  2  \sqrt{n}  \int_{\eta }^{\eta' \sigma_{n}} | \phi_{n}  (s/\sigma_{n}) |^{n} \d s \le   2  \sqrt{n}\int_{\eta }^{\infty} \frac{c_{\eta}^{n}}{s^{\alpha n}}\d s =  2  \sqrt{n}  \frac{c_{\eta}^{n}}{(\alpha n-1)\eta ^{\alpha n-1}},\]
  which tends to $0$ since  $c_{\eta}<\eta^{\alpha}$.
  
  \emph{Bounding $|I^{(4)}_{\eta'}(x,n)|$:}   By Lemma~\ref{lem:techn}(i), for   $n$ sufficiently large and $x \in \R$, 
  \[ |I^{(4)}_{\eta'}(x,n) | \leq  2  \sqrt{n}  \int_{\eta' \sigma_{n} }^{\pi \sigma_{n}} | \phi_{n}  (s/\sigma_{n}) |^{n} \d s \le 2 \pi \sqrt{n} \sigma_{n}   \frac{1}{\sigma_{n}^{n}}  \cv 0.\]
  
  \emph{Bounding $|I^{(5)}_{A}(x)|$:} By the choice of $A$ in~\eqref{eq:Ab}, we directly conclude that $|I^{(4)}_{A}(x)| \le  \epsilon$ for every $x \in \R$.
This completes the proof.
\end{proof}

\subsection{Stable regime}
\label{sec:LLT_stable}

We finally consider the case where $x_{n}/n$ converges to the right-edge $\rho G'(\rho) / G(\rho)$ when the latter is finite (recall that that case was excluded in Theorem~\ref{thm:LTT_bulk}).
We assume throughout that $\xi$ has finite mean $m>0$, is aperiodic and belongs to the domain of attraction of a stable law with index $\alpha \in (1, 2]$, which means that there exists a function $L$, slowly varying at infinity, such that for every $s \in (0,1)$,
\begin{equation}
\label{eq:attraction}
G(s)=\E[s^{\xi}] = 1 - m + ms + (1-s)^{\alpha} L\left(\frac{1}{1-s}\right).
\end{equation}
Recall that $L$ is slowly varying if for every $c>0$, it holds $L(cx)/L(x) \to 1$ as $x \to \infty$.

Before stating limit theorems, we introduce scaling sequences. If $\Var(\xi)<\infty$ (which implies $\alpha=2$), we set $r_{n}=\sqrt{n \Var(\xi) {/2}}$.  In this case, $L$ has a finite limit at $\infty$, and setting $\ell=\lim_{\infty} L$, we have $\Var(\xi)=2\ell+m-m^{2}$. If $\Var(\xi)=\infty$, we let $(r_{n})_{n\ge 1}$ be a sequence satisfying $r_{n}^{\alpha} \sim n L(r_{n})$ as $n\to \infty$.  In the latter case, it is sometimes useful to express $L$ in terms of the truncated variance of $\xi$ (although we will not use it here): as in e.g.~\cite[Lemma~4.7]{BS15} (which covers the case $m=1$, but the argument in the general case is the same),  if we set $L_{2}(n) = n^{\alpha-2} \Var(\xi \ind{\xi \le n})$, then $L_{2}$ is slowly varying and as $n\to \infty$ we have $L(n) \sim \frac{\Gamma(3-\alpha)}{\alpha (\alpha-1)} L_{2}(n)$ (this is specific to the case $\Var(\xi)=\infty$).

We now introduce some notation. Let $X^{\alpha}$ denote the $\alpha$-stable process with no negative jump, whose law is characterised by $\E[\exp(-q X^{\alpha}_{t})] = \exp(t q^{\alpha})$ for every $t,q>0$; note that $X^{2}$ has the law of $\sqrt{2}$ times a standard Brownian motion. For every $t>0$,  let $d^{\alpha}_{t}$ denote the density of $X^{\alpha}_{t}$.
The classical local limit theorem (see e.g.~\cite[Theorem~4.2.1]{IL71}, combined with~\cite[Eq.~(7) and (41)]{Kor17} for the scaling constants) reads
\begin{equation}\label{eq:LLT_stable_classique}
\sup_{k \in \Z} \left|r_{n} \cdot \Pr{S_{n} = mn+k} - d^{\alpha}_{1}\left(\frac{k}{r_{n}}\right)\right| \cv 0.
\end{equation}
This gives precise first-order asymptotic estimates uniformly for $k/r_{n}$ belonging to a compact subset of $\R$. The last limit theorem of this section aims at giving a precise first-order asymptotic estimate for the quantity $\Pr{S_{n}=x_{n}}$ in the large deviation regime when $x_{n}/n \to m$ but $(x_{n}-mn)/r_{n} \to -\infty$.

\begin{thm}\label{thm:lls1}
Assume that~\eqref{eq:attraction} holds. Let $(\lambda_{n})$ be a sequence such that $\lambda_{n}/r_{n} \to -\infty$ and $\lambda_{n} /n \to 0$. Set $x_{n}=mn+\lambda_{n}$.
For every $n$ sufficiently large, let $b_{n}$ be such that $b_{n} G'(b_{n})/G(b_{n}) = x_{n}/n$. Let $\varepsilon_{n} > 0$ be such that $b_{n} = 1-\varepsilon_{n}$ and define
\[\scale_{n} = \sqrt{\frac{(\alpha-1) |\lambda_{n}|}{\varepsilon_{n}}}.\]
Then
\[\sup_{k \ge -x_{n}} \left|\scale_{n} \cdot \frac{b_{n}^{x_{n}+k}}{G(b_{n})^{n}} \cdot \Pr{S_{n} = x_{n}+k} -  \frac{1}{\sqrt{2\pi}}\exp\left(-\frac{k^{2}}{2\scale_{n}^{2}}\right)\right| \cv 0,\]
\end{thm}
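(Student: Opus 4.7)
The plan is to follow closely the strategy of Theorem~\ref{thm:LTT_large_endpoint}, with the explosive singular expansion of $G$ at $\rho$ replaced by the stable-type expansion~\eqref{eq:attraction} at $s=1$. By~\eqref{eq:tilt}, it suffices to prove
\[
\sup_{k\in\Z}\left|\scale_{n}\cdot\Pr{S_{n}^{(b_{n})}=x_{n}+k}-\frac{1}{\sqrt{2\pi}}\exp\!\left(-\frac{k^{2}}{2\scale_{n}^{2}}\right)\right|\cv 0
\]
for the tilted walk $S_{n}^{(b_{n})}$; we then extract this by Fourier inversion, splitting $\int_{-\pi\scale_{n}}^{\pi\scale_{n}}$ into the same five pieces $I^{(1)},\dots,I^{(5)}$ as there.

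The first step is to derive moment asymptotics from~\eqref{eq:attraction}. Writing $b_{n}=1-\varepsilon_{n}$, standard transfer theorems for regularly varying power series applied to the singular part of $G$ give $G(b_{n})\to 1$, $m-G'(b_{n})\sim\alpha\varepsilon_{n}^{\alpha-1}L(1/\varepsilon_{n})$, and more generally $G^{(k)}(b_{n})\sim(-1)^{k}\alpha(\alpha-1)\cdots(\alpha-k+1)\varepsilon_{n}^{\alpha-k}L(1/\varepsilon_{n})$ for $k\ge 2$. The defining relation $b_{n}G'(b_{n})/G(b_{n})=m+\lambda_{n}/n$ forces $\varepsilon_{n}\cv 0$ with $\varepsilon_{n}^{\alpha-1}L(1/\varepsilon_{n})\sim|\lambda_{n}|/(\alpha n)$; plugging these into~\eqref{eq:moments} gives $\sigma_{n}^{2}:=\Var(\xi^{(b_{n})})\sim(\alpha-1)|\lambda_{n}|/(n\varepsilon_{n})$, i.e.~$n\sigma_{n}^{2}\sim\scale_{n}^{2}$, and $\E[(\xi^{(b_{n})})^{3}]=O(|\lambda_{n}|/(n\varepsilon_{n}^{2}))$. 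The Lyapunov ratio $\E[|\xi^{(b_{n})}|^{3}]/(\sqrt{n}\,\sigma_{n}^{3})$ is then of order $1/\sqrt{\varepsilon_{n}|\lambda_{n}|}$; the hypotheses $\lambda_{n}/r_{n}\to-\infty$ and $r_{n}^{\alpha}\sim nL(r_{n})$ force $\varepsilon_{n}\gg 1/r_{n}$, whence $\varepsilon_{n}|\lambda_{n}|\gg|\lambda_{n}|/r_{n}\to\infty$ and Lyapunov holds. A third-order Taylor expansion of $\phi_{n}(t)=\E[\e^{\ic t\xi^{(b_{n})}}]$ then gives the central limit theorem $\E[\exp(\ic s(S_{n}^{(b_{n})}-x_{n})/\scale_{n})]\to\e^{-s^{2}/2}$ uniformly on compacts.

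With these ingredients, the integrals $I^{(1)}$ and $I^{(5)}$ are handled exactly as in the proof of Theorem~\ref{thm:LTT_large_endpoint} (by the CLT and by Gaussian tails), and $I^{(2)}$ on $A\le|t|\le\eta\sqrt{n}$ via the bound $|\phi_{n}(t/\scale_{n})|\le\exp(-c_{1}t^{2}/n)$ for small arguments, derived from the Taylor expansion as in Lemma~\ref{lem:techn}(iii). The remaining work is to adapt parts (i) and (ii) of that lemma. Part (i) is now the easy one: since $G$ is bounded at $s=1$, aperiodicity of $\xi$ yields $\phi_{n}(t)\to G(\e^{\ic t})$ uniformly on compacts of $[-\pi,\pi]\setminus\{0\}$ with $|G(\e^{\ic t})|<1$, giving a uniform-in-$n$ bound $|\phi_{n}(t)|\le 1-\delta$ on $\{\eta'\le|t|\le\pi\}$.

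The main obstacle will be the analog of part (ii), which controls $|\phi_{n}|$ in the intermediate range $|t|\in[\eta/\scale_{n},\eta']$: because $G(s)=1+O((1-s)^{\alpha}L(1/(1-s)))$ at $s=1$ rather than exploding, one cannot dominate $|\phi_{n}|$ by a power of $|1-b_{n}\e^{\ic t}|$. The idea will be to insert~\eqref{eq:attraction} directly into $\phi_{n}(t)=G(b_{n}\e^{\ic t})/G(b_{n})$ and compute the real part of $G(b_{n}\e^{\ic t})$: writing $z_{n}(t)=1-b_{n}\e^{\ic t}$, with $\mathrm{Re}\,z_{n}(t)\ge\varepsilon_{n}$ and $|z_{n}(t)|^{2}\ge\varepsilon_{n}^{2}+ct^{2}$, and using Potter-type bounds on $L$, one checks that the singular contribution produces a negative correction of order $|z_{n}(t)|^{\alpha}L(1/|z_{n}(t)|)$ that dominates the smooth piece, leading to a bound of the form
\[
|\phi_{n}(t)|\le 1-c\,|z_{n}(t)|^{\alpha}L(1/|z_{n}(t)|).
\]
Raising to the $n$-th power and using $\varepsilon_{n}^{\alpha-1}L(1/\varepsilon_{n})\sim|\lambda_{n}|/(\alpha n)$ together with $|z_{n}(t)|\ge c(\varepsilon_{n}+|t|)$ then yield the summability required to show that $I^{(3)}$ and $I^{(4)}$ vanish in the limit, completing the proof.
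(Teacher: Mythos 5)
Your proposal takes a genuinely different route from the paper's. The paper's proof is quite short: in the finite-variance case it invokes~\cite[Theorem~6.1.5]{BB08}, and in the infinite-variance case it verifies a central limit theorem together with stochastic compactness and then cites Jain--Pruitt's local limit theorem~\cite[Theorem~3.1]{JP87} as a black box, reserving most of the effort for the moment estimates needed to identify the scaling constant $\scale_{n}$. You instead try to redo the whole Fourier analysis following the template of Theorem~\ref{thm:LTT_large_endpoint}. Your moment asymptotics, the derivation of the CLT, the Lyapunov-type estimate, and the small- and large-argument bounds (analogues of Lemma~\ref{lem:techn}(iii) and (i)) are all sound and consistent with what the paper computes.

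There is, however, a genuine gap at exactly the point you flag as ``the main obstacle'': the intermediate range $|t|\in[\eta/\sigma_{n},\eta']$. Your plan is to ``insert~\eqref{eq:attraction} directly into $\phi_{n}(t)=G(b_{n}\e^{\ic t})/G(b_{n})$'' and read off the real part of $G(b_{n}\e^{\ic t})$, but~\eqref{eq:attraction} is a purely real-variable expansion, valid for $s\in(0,1)$. Unlike in Theorem~\ref{thm:LTT_large_endpoint}, no $\Delta$-analyticity is assumed in this regime, the slowly varying function $L$ is not even defined for complex arguments, and a bound on $G$ along the real segment does not automatically propagate to the complex argument $s=b_{n}\e^{\ic t}$. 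The correct way to obtain $|\phi_{n}(t)|\le 1-c|t|^{\alpha}L(1/|t|)$ on this range is to work with characteristic functions on the real line: the domain-of-attraction hypothesis gives $1-|\E[\e^{\ic t\xi}]|^{2}\ge c|t|^{\alpha}L(1/|t|)$ for small real $|t|$, and one then must compare the tilted and untilted characteristic functions. This comparison is not automatic: one has to exploit that $\sigma_{n}\varepsilon_{n}\to 0$, so that on the window where $1/|t|\le\sigma_{n}/\eta\ll 1/\varepsilon_{n}$ lives the exponential damping $b_{n}^{k}=(1-\varepsilon_{n})^{k}$ is bounded away from $0$ for the relevant $k$, together with a tail truncation to control the part of $1-|\phi_{n}(t)|^{2}$ coming from jumps of size $\gtrsim 1/\varepsilon_{n}$. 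As it stands your sketch skips this entire argument, and the step is nontrivial enough that the paper chose to avoid it altogether by citing~\cite{JP87}. (A minor additional remark: your relation $\varepsilon_{n}^{\alpha-1}L(1/\varepsilon_{n})\sim|\lambda_{n}|/(\alpha n)$ only holds in the infinite-variance case; when $G^{(2)}(\rho)<\infty$ one instead has $|\lambda_{n}|/(n\varepsilon_{n})\to\Var(\xi)$ as in~\eqref{eq:varfini}, but that case is the easy one and your argument would go through there with minor changes.)
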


Roughly speaking, when $\Var(\xi)<\infty$, this estimate, with $\sqrt{n\Var(\xi^{(b_{n})})}$ instead of $\scale_{n}$, follows from~\cite[Theorem 6.1.5]{BB08} and when $\Var(\xi)=\infty$ this follows from~\cite[Theorem  3.1]{JP87}. The identification of $\scale_{n}$ requires asymptotics estimates on the moments of $\xi^{(b_{n})}$, which is the core of the proof of Theorem~\ref{thm:lls1}.

\begin{proof}[Proof of Theorem~\ref{thm:lls1}] We start with some preliminary estimates. By standard arguments (see e.g.~\cite[Eq. 44]{Kor17}), it follows from~\eqref{eq:attraction} that when $u \downarrow 0$, we have 
\[G'(1-u)= m - \alpha u^{\alpha-1} L(1/u)(1+o(1)).\]
By taking $u = \varepsilon_{n}$, we get
\begin{align*}
\frac{\lambda_{n}}{n} = \frac{x_{n}}{n} - m
&= \frac{(1-\varepsilon_{n}) G'(1-\varepsilon_{n})}{G(1-\varepsilon_{n})} - m
\\
&= \frac{(1-\varepsilon_{n}) (m - \alpha \varepsilon_{n}^{\alpha-1} L(1/\varepsilon_{n})(1+o(1)))}{1 - m \varepsilon_{n} + \varepsilon_{n}^{\alpha} L(1/\varepsilon_{n})} - m
\\
&= - \varepsilon_{n} \left(m - m^{2} + \alpha \varepsilon_{n}^{\alpha-2} L(1/\varepsilon_{n})\right) (1+o(1))
.\end{align*}
If $\Var(\xi)<\infty$, if follows that
\begin{equation}\label{eq:varfini}
\frac{|\lambda_{n}|}{n \varepsilon_{n}} \cv \Var(\xi).
\end{equation}
Now assume that $\xi$ has infinite variance,  which entails that $\lim_{\infty} L=\infty$ in the case $\alpha=2$. Then
\begin{equation}\label{eq:equiv_scaling_distorted-new}
\frac{|\lambda_{n}|}{\varepsilon_{n}} \sim  \alpha n \varepsilon_{n}^{\alpha-2} L(1/\varepsilon_{n}), \qquad \scale_{n}^{2} \sim \alpha(\alpha-1) n \varepsilon_{n}^{\alpha-2}L(1/\varepsilon_{n}).
\end{equation}
Next, we claim that $r_{n}\varepsilon_{n} \to \infty$. Indeed, from $r_{n}^{\alpha} \sim n L(r_{n})$, it follows that
\[|\lambda_{n}|/r_{n} 
\sim \alpha (r_{n}\varepsilon_{n})^{\alpha-1} \frac{L(1/\varepsilon_{n})}{L(1/r_{n})}
.\]
Our claim follows: if the sequence $(r_{n}\varepsilon_{n})$ does not tend to infinity, it would have a subsequence with a finite limit, and then the right-hand side would have a finite limit along this subsequence, which contradicts the fact that $|\lambda_{n}|/r_{n} \to \infty$. In particular, $\scale_{n} \to \infty$.

We now establish the theorem by checking that
\begin{equation}
\label{eq:ll}
\sup_{k \ge -x_{n}} \left|\scale_{n}  \cdot \Pr{S^{(b_{n})}_{n} = x_{n}+k} -  \frac{1}{\sqrt{2\pi}}\exp\left(-\frac{k^{2}}{2\scale_{n}^{2}}\right)\right| \cv 0.
\end{equation}
When $\xi$ has finite variance,~\cite[Theorem 6.1.5]{BB08} (see in particular the second paragraph after its statement) gives~\eqref{eq:ll} with $\sqrt{\Var(\xi^{(b_{n})})}$ instead of $\scale_{n}$, and the estimate~\eqref{eq:varfini} yields $\scale_{n} \sim  \sqrt{n \Var(\xi^{(b_{n})})}$.

Now assume that $\xi$ has infinite variance. We check that we can apply~\cite[Theorem 3.1]{JP87}; the first step is to check that a central limit theorem holds. We start with estimating the moments of $\xi^{(b_n)}$. 
For every $k\ge 1$,
\begin{equation}\label{eq:derivees_G_distorted}
G^{(k)}(1-u) = \alpha(\alpha-1) \cdots (\alpha-k+1) u^{\alpha-k} L(1/u) (1+o(1)).
\end{equation}
Then we infer from~\eqref{eq:moments} that
\begin{equation}\label{eq:moment_3_scaling_distorted-new}
\begin{split}
\E\big[(\xi^{(b_n)})^{2}\big] 
&\equi \alpha(\alpha-1) \varepsilon_{n}^{\alpha-2} L(1/\varepsilon_{n}),
\\
\E\big[(\xi^{(b_n)})^{3}\big] 
&\equi \alpha(\alpha-1)(\alpha-2) \varepsilon_{n}^{\alpha-3} L(1/\varepsilon_{n}).
\end{split}
\end{equation}
Thus, using~\eqref{eq:equiv_scaling_distorted-new}, 
\[n \frac{\E[(\xi^{(b_n)})^{2}]}{\scale_{n}^{2}} \cv 1,\]
and
\[n \frac{\E[(\xi^{(b_n)})^{3}]}{{\scale}_{n}^{3}} 
\equi \frac{1}{\sqrt{n}}\frac{(\alpha-2)}{\sqrt{\alpha(\alpha-1) \varepsilon_{n}^{\alpha} L(1/\varepsilon_{n})}}
\equi \frac{(\alpha-2)}{\sqrt{\alpha-1}} \frac{1}{\sqrt{|\lambda_{n}| \varepsilon_{n}}}
.\]
In particular, $n {\E[(\xi^{(b_n)})^{3}]}/{{\scale}_{n}^{3}}  \to 0$ since  $r_{n}\varepsilon_{n} \to \infty$.

Now, as above, write
\[\E[\e^{\ic t \xi^{(b_n)}}]
=1+\ic \E[\xi^{(b_n)}]t- \E[(\xi^{(b_n)})^{2}] \frac{t^{2}}{2}+R^{n}_{1}(t)
\quad\text{with}\quad
|R^{n}_{1}(t)| \le \frac{1}{6} |t|^{3} \E[(\xi^{(b_n)})^{3}],\]
for every $n \ge  1$ and $t \in \R$. The previous estimates yield the existence of a constant $C >0$ such that $n|R^{n}_{1}(t/\scale_{n}) | \le  C |t|^{3}/\sqrt{|\lambda_{n}| \varepsilon_{n}}$ for every $t \in \R$,  $n \ge  1$.  Since $ n  R^{n}_{1}(t/\scale_{n}) \to 0$  uniformly for $t$ in compact subsets of $\R$, it readily follows that
\[\E\bigg[\exp\bigg(\ic t \frac{{S}_{n}^{(b_{n})}-x_{n} }{\scale_{n}}\bigg)\bigg]  \cv \e^{- \frac{t^{2}}{2}}.
\]

Finally, we check that the other conditions of Theorem  3.1 in~\cite{JP87} are satisfied  (using the notation of~\cite{JP87}) with $F_{n}$ being the distribution of $\xi^{(b_{n})}$. The random variable $\xi ^{(b_{n})}$ converges in distribution to $\xi$, which is stochastically compact (using the terminology of~\cite{JP87})  since it belongs to the domain of attraction of a stable law. We may therefore apply~\cite[Theorem 3.1]{JP87}. To identify the scaling constant, note that $\Pr{S_{n} \leq x_{n}} \to 0$, so Theorem~2.1 and Lemma~4.1(c) in~\cite{JP87} give the desired result (the quantity $V(\lambda_{n})$ {in their notation, with $\lambda_{n}=b_{n}$ here,} is precisely the variance of $\xi^{(b_{n})}$). This establishes~\eqref{eq:ll} and completes the proof. 
\end{proof}

\section{Scaling limits of nondecreasing random bridges}
\label{sec:marches}

In this section we establish invariance principles, as $ n \to \infty$, for nondecreasing random walk bridges, which are random paths which start at $0$, whose increments are nonnegative integers, and which end at time $n$ at a fixed value $x_{n}$. This can be viewed as extensions in different regimes of a result of Liggett~\cite{Lig68}, who roughly speaking focused on bridge random walks in the domain of attraction of a stable law in the `bulk regime', where the value of the endpoint is of the same order as the flucutations of the random walk (this corresponds to Theorem~\ref{thm:stable_marches_drift} below).

\subsection{Discrete and continuum bridges}
\label{sec:def_ponts}

Let us first precisely introduce the model and its continuum limits, before stating our results in Sec.~\ref{sec:results_marches} and proving them in Sec.~\ref{sec:LLT_implique_excursion_brownienne} and~\ref{sec:preuve_condensation} by relying on the local estimates from Sec.~\ref{sec:LLT}.

\subsubsection{Discrete bridges}
For every $n \ge 1$, let $x_{n} \in \N$ and define $\mathcal{S}^{n}_{x_{n}}$ to be the set of paths $s = (s_{k} ; 0 \le k \le n)$ such that
\[s_{0} = 0, \qquad s_{n} = x_{n}, \qquad\text{and}\qquad s_{k} - s_{k-1} \in \Z_{\ge0} \enskip\text{for every}\enskip 1 \le k \le n.\]
Let $\boldsymbol{\poids} = (\poids(i))_{i \ge 0}$ be a sequence of nonnegative real numbers and define a measure $w^{\boldsymbol{\poids}}$ on $\mathcal{S}^{n}_{x_{n}}$ by setting for every $s \in \mathcal{S}^{n}_{x_{n}}$,
\[w^{\boldsymbol{\poids}}(s) = \prod_{k=1}^{n} \poids(s_{k}-s_{k-1}).\]
We shall always implicitly assume that $n$ and $x_{n}$ are compatible with the support of $\boldsymbol{\poids}$, in the sense that $\mathcal{S}^{n}_{x_{n}}$ has nonzero total $w^{\boldsymbol{\poids}}$-weight, and we also assume without further notice that $ \{k \geq 0: \poids(k)>0\}$ is not included in a sublattice of $\Z$, i.e.~the largest $h>0$ such that there exists $a \in \R$ such that $ \{k \geq 0: \poids(k)>0\} \subset a+h\Z$ is $h=1$. The results carry through without such an aperiodicity condition after mild adaptations.

Let us denote by $\P^{\boldsymbol{\poids},n}_{x_{n}} = w^{\boldsymbol{\poids}}(\cdot)/w^{\boldsymbol{\poids}}(\mathcal{S}^{n}_{x_{n}})$ the associated probability distribution on $\mathcal{S}^{n}_{x_{n}}$. Note that if $\boldsymbol{\poids}$ is a probability measure, then $\P^{\boldsymbol{\poids},n}_{x_{n}}$ is just the law of the bridge of a $\boldsymbol{\poids}$-random walk conditioned to end at $x_{n}$ at time $n$. 
We shall denote throughout this paper by $S^{n} = (S^{n}_{k} ; 0 \le k \le n)$ a random path sampled according to $\P^{\boldsymbol{\poids},n}_{x_{n}}$ 
(we write $S^{n}$ with $x_{n}$ implicit to lighten the notation).
By analogy with random trees (see Sec.~\ref{sec:arbres_aleatoires} below), we call such random paths \emph{simply generated bridges}. 
As shown in the next lemma, we could have equivalently worked with random walk bridges, but in view of applications to Boltzmann planar maps, the simply generated framework is more convenient.

We  introduce some notation which was also used in Sec.~\ref{sec:LLT}. We emphasize that as opposed to there, the sequence $\boldsymbol{\poids}$ here is not necessarily a probability measure. Set
\begin{equation}\label{eq:serie_gen_poids_marches}
G(z)=\sum_{k=0}^{\infty}  \poids(k)  z^{k}
\qquad\text{and}\qquad
\Psi(z) = \frac{zG'(z)}{G(z)}.
\end{equation}
Denote by $\rho$ the radius of convergence of $G$, which we will always assume to be positive. Let $i_{\boldsymbol{\poids}} = \inf\{i \ge 0 : \poids(i) > 0\}$ and observe that $\mathcal{S}^{n}_{x_{n}}=\varnothing$ if $x_{n} < i_{\boldsymbol{\poids}} n$.
It is a simple matter to check that $\Psi$ is increasing on $(0, \rho)$, we extend it by continuity with $\Psi(0) = i_{\boldsymbol{\poids}}$ and $\Psi(\rho) \in (0, \infty]$. Note that $\Psi(\rho) = \infty$ if and only if $G'(\rho) = \infty$. When $G'(b)<\infty$, $\Psi(b)$ is simply the expectation of a random variable with generating function $z \mapsto {G(b z)}/{G(b)}$.
In the context of bridges, the exponential tilting, which was presented in the introduction, leaves the law of the path invariant, as is made explicit by the next result.

\begin{lem}\label{lem:tilting}
Let $\boldsymbol{\poids}$ be a non-negative sequence and $a,b>0$, and define another sequence $\boldsymbol{\nu}$ by setting $\nu(k) = a b^{k} \poids(k)$ for every $k\ge0$. 
\begin{enumerate}
\item The laws $\P^{\boldsymbol{\poids},n}_{x_{n}}$ and $\P^{\boldsymbol{\nu},n}_{x_{n}}$ on $\mathcal{S}^{n}_{x_{n}}$ are equal.

\item Furthermore, for every choice of $b \in (0,\rho]$ such that $G'(b) < \infty$, taking $a = 1/G(b)$ provides a probability measure with mean $\Psi(b)$.
\end{enumerate}
\end{lem}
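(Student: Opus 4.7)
The plan is to verify both claims by direct computation from the definition of $w^{\boldsymbol{\poids}}$; no deep ingredient is needed, as this lemma merely formalises the exponential tilting of Sec.~\ref{sec:intro_LLT} in the setting of bridges with fixed endpoint.

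For (i), I would fix an arbitrary $s \in \mathcal{S}^{n}_{x_{n}}$ and expand
\[w^{\boldsymbol{\nu}}(s) = \prod_{k=1}^{n} \nu(s_{k}-s_{k-1}) = a^{n}\, b^{\sum_{k=1}^{n}(s_{k}-s_{k-1})} \prod_{k=1}^{n} \poids(s_{k}-s_{k-1}).\]
The exponent of $b$ telescopes to $s_{n}-s_{0}=x_{n}$, so $w^{\boldsymbol{\nu}}(s) = a^{n} b^{x_{n}} w^{\boldsymbol{\poids}}(s)$. The prefactor $a^{n}b^{x_{n}}$ depends only on $n$ and $x_{n}$, not on the particular path; it therefore cancels between $w^{\boldsymbol{\nu}}(s)$ and the total mass $w^{\boldsymbol{\nu}}(\mathcal{S}^{n}_{x_{n}})$, and I conclude $\P^{\boldsymbol{\nu},n}_{x_{n}} = \P^{\boldsymbol{\poids},n}_{x_{n}}$.

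For (ii), the choice $a = 1/G(b)$ turns $\boldsymbol{\nu}$ into a probability measure since $\sum_{k\ge 0}\nu(k) = G(b)/G(b)=1$. Here I would briefly note that for $b\in(0,\rho]$ the assumption $G'(b)<\infty$ forces $G(b)<\infty$ as well (by comparing $\poids(k)b^{k}$ with $k\poids(k)b^{k-1}$ for $k\ge 1$, the only non-trivial case being $b=\rho$). The mean is then immediate: $\sum_{k\ge 0} k\,\nu(k) = bG'(b)/G(b) = \Psi(b)$, by the very definition of $\Psi$ in \eqref{eq:serie_gen_poids_marches}.

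No substantive obstacle is anticipated; the whole content of the lemma reduces to the telescoping observation that, once the endpoint $s_{n}=x_{n}$ is fixed, the multiplicative tilt $b^{s_{k}-s_{k-1}}$ collapses to a path-independent constant $b^{x_{n}}$, so bridges are insensitive to exponential tilting of the step weights.
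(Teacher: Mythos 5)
Your proof is correct and follows essentially the same route as the paper's: the telescoping of $b^{s_k - s_{k-1}}$ to the path-independent constant $a^n b^{x_n}$, and the immediate computation of the mean. The only small addition you make — that $G'(b) < \infty$ at $b = \rho$ forces $G(\rho) < \infty$ — is a worthwhile remark the paper leaves implicit.
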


\begin{proof}
For the first assertion, for every path $s \in \mathcal{S}^{n}_{x_{n}}$, it holds
\[w^{\boldsymbol{\nu}}(s) = \prod_{k=1}^{n} a b^{s_{k}-s_{k-1}} \poids(s_{k}-s_{k-1})
= a^{n} b^{x_{n}} w^{\boldsymbol{\poids}}(s),\]
and since $a^{n} b^{x_{n}}$ does not depend on $s$, then indeed, when rescaled by the total mass, the laws $\P^{\boldsymbol{\poids},n}_{x_{n}}$ and $\P^{\boldsymbol{\nu},n}_{x_{n}}$ are equal.
The second assertion is immediate and actually motivated our definition of $\Psi$.
\end{proof}

Therefore, whatever the original weight sequence $\boldsymbol{\poids}$ used to generate the random path $S^{n}$ in the first place is, as soon as its generating series has a nonzero radius of convergence, we can always tilt the weights in order to view $S^{n}$ as a random walk conditioned on its value at time $n$.
Moreover, the family of such probability measures is parameterised by their mean value, which lies in $(i_{\boldsymbol{\poids}}, \Psi(\rho)]$, where the right-edge is included if and only if $G'(\rho) < \infty$.
Note that such a law has generating function $G(b\,\cdot)/G(b)$, and is the law of $\xi^{(b)}$ in the notation of Sec.~\ref{sec:LLT}; let us also recall the first moments of this law, given in~\eqref{eq:moments}.

The main results of this section are convergence of these paths, suitably rescaled, towards bridges of L\'evy processes that we now introduce.

\subsubsection{Continuum Lévy bridges}

For any interval $I \subset \R_{+}$, we shall denote by $\D(I, \R)$ the space of real-valued c\`adl\`ag functions on $I$, equipped with the Skorokhod $J_{1}$ topology (see Chapter~VI in~\cite{JS03} for background). For any $\alpha \in (1,2]$, we shall denote by $X^{\alpha}$ the $\alpha$-stable L\'evy process with no negative jump, whose law is characterised by the Laplace transform $\E[\exp(-q X^{\alpha}_{t})] = \exp(t q^{\alpha})$ for every $q,t > 0$; note that $X^{2}$ has the law of $\sqrt{2}$ times a standard Brownian motion. Similarly to the discrete setting, we can make sense of the law of $(X^{\alpha}_{t}-ct ; 0 \le t \le 1)$ conditioned on the event $\{X^{\alpha}_{1} - c=0\}$ with $c \in \R$. We refer to e.g.~\cite{Ber96}, Chapter~XIII, and especially Sec.~3 there for details (which focuses in the case $c=0$, but the arguments extend readily).

Indeed, recall that the law of $X^{\alpha}_{t}$ is absolutely continuous with respect to the Lebesgue measure, and we shall denote by $d^{\alpha}_{t}$ its density; we stress that this function is explicit only in the case $\alpha=2$, in which case $d^{2}_{t}(x) = (4\pi t)^{-1/2} \exp(-x^{2}/(4t))$. 
Then for every $c \in \R$, we can define a process $(X^{\alpha,c,\br}_{t} ; 0 \le t \le 1)$, which is informally, the process $(X^{\alpha}_{t}-ct ; 0 \le t \le 1)$  conditioned to end at $0$ at time $1$, whose law is characterised by the following absolute continuity relation: for every $0<u<1$ and every bounded continuous functional $F: \D([0,u]) \to \R$,
\begin{equation}\label{eq:lawbridgestable}
\Es{F\left(X^{\alpha,c,\br}_{s} ; 0 \le s \le u \right)} = \Es{F\left(X^{\alpha}_{s}-cs ; 0 \le s \le u \right) \frac{d^{\alpha}_{1-u}(c-X^{\alpha}_{u})}{d^{\alpha}_{1}(c)}}.
\end{equation}
Finally, we shall denote by $B^{\br}$ the process $2^{-1/2} X^{2,0}$, which is the standard Brownian bridge going from $0$ to $0$ in one unit of time.
One can check, using Girsanov's formula, that in fact $X^{2,c,\br}$ always has the law of $\sqrt{2}$ times this standard Brownian bridge, for any $c \in \R$. This is not the case when $\alpha \in (1,2)$.

\subsection{Statements of the main results}
\label{sec:results_marches}

We turn to our main scaling limit results concerning the random simply generated bridges $S^{n}$ under $\P^{\boldsymbol{\poids},n}_{x_{n}}$, for which $S^{n}_{n} = x_{n}$. Up to extraction of a subsequence, we may assume without loss of generality that $x_{n} / n \to \gamma \in [0, \infty]$ as $n \to \infty$. 
We shall canonically view our discrete path $S^{n}$ as a c\`adl\`ag function on $[0,1]$, equipped with the Skorokhod $J_{1}$ topology; recall that when the limit is a continuous path, convergence in this topology is equivalent to the uniform convergence. For every $k \in \{1, \dots, n\}$, we shall denote by $X^{n}_{k} = S^{n}_{k} - S^{n}_{k-1}$ the $k$'th increment of $S^{n}$.
In view of applications to random maps, we also include in our statements a law of large numbers for the  sum of the square of the increments.

Recall the first moments of $\xi^{(b)}$ from~\eqref{eq:moments}, let us denote by $m_{b} = \frac{b G'(b)}{G(b)}$ and $\sigma^{2}_{b} = \frac{b^{2} G^{(2)}(b) + b G'(b)}{G(b)} - (\frac{b G'(b)}{G(b)})^{2}$ respectively the mean and variance of $\xi^{(b)}$.
We deduce from Theorem~\ref{thm:LTT_forme_generale_hors_stable} the following limits.

\begin{thm}
\label{thm:CVmarches}
Let $(S^{n}_{k} ; 0 \le k \le n)$ have the law $\P^{\boldsymbol{p},n}_{x_{n}}$ for every $n \ge 1$. 
The convergences
\[\left(\frac{1}{\sqrt{\scale_{n}}} \left(S^{n}_{\floor{nt}} - x_{n} t\right) ; 0 \le  t \le  1\right) \cvloi B^{\br}
\qquad\text{and}\qquad
\frac{1}{\scale_{n}} \sum_{k=1}^{n} (X^{n}_{k})^{2} \cvproba C,\]
hold in each of the following cases:
\begin{thmlist}
\item\label{thm:gencrit_marches} There exists $\gamma \in  (i_{\boldsymbol{\poids}}, \Psi(\rho))$ such that $x_{n}/n \to \gamma$, and $\scale_{n} = n \sigma^{2}_{b}$ and $C = m_{b}/\sigma^{2}_{b}$ with $b=\Psi^{-1}(\gamma)$.

\item\label{thm:gencrit0_marches} $x_{n}/n \to 0$ and $x_{n} \to \infty$, $\poids(0), \poids(1)>0$, $\scale_{n} = x_{n}$, and finally $C=1$.

\item\label{thm:nongenneg_marches} $x_{n}/n \to \infty$, $G$ is $\Delta$-analytic and there exist $\alpha, c>0$ such that $G(\rho-z) \sim c/z^{\alpha}$ as $z \to 0$ with $\mathrm{Re}(z)>0$,  $\scale_{n} = x_{n}^{2}/(\alpha n)$, and finally $C=\alpha+1$.
\end{thmlist}
\end{thm}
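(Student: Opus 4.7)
The plan is to reduce to a random walk conditioned on its endpoint via the tilting Lemma~\ref{lem:tilting}, and then to use the local limit estimates of Theorem~\ref{thm:LTT_forme_generale_hors_stable} to convert a functional central limit theorem for the \emph{unconditioned} tilted walk into a Brownian bridge limit for the conditioned one, in the spirit of the classical approach of Liggett~\cite{Lig68}. Specifically, setting $b_{n} = \Psi^{-1}(x_{n}/n)$, Lemma~\ref{lem:tilting} lets us assume that $\boldsymbol{\poids}$ is the probability distribution of $\xi^{(b_{n})}$, so that $S^{n}$ has the conditional law of the random walk $S^{(b_{n})}$ given $S^{(b_{n})}_{n} = x_{n}$. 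The moment formulas~\eqref{eq:moments}, combined with the asymptotics of $b_{n}$ in each regime (standard in (i), obtained from~\eqref{eq:derivees_superieures_G} in (ii) and from~\eqref{eq:Gderiv} in (iii)), show that $\scale_{n} \sim n\sigma^{2}_{b_{n}}$, so that $\sqrt{\scale_{n}}$ is the natural Gaussian scale.

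The first step will be an unconditional functional central limit theorem
\[\left(\frac{S^{(b_{n})}_{\lfloor nt\rfloor} - m_{b_{n}} \lfloor nt\rfloor}{\sqrt{\scale_{n}}} \,;\, 0 \le t \le 1\right) \cvloi W,\]
where $W$ is a standard Brownian motion. Since $\xi^{(b_{n})}$ varies with $n$, this is Donsker's theorem for a triangular array and reduces to checking a Lindeberg condition of the form $(n/\scale_{n}) \E[(\xi^{(b_{n})} - m_{b_{n}})^{2} \ind{|\xi^{(b_{n})}-m_{b_{n}}| > \varepsilon \sqrt{\scale_{n}}}] \to 0$. In regime (i), $\xi^{(b_{n})} \to \xi^{(b)}$ in distribution, with all moments finite, so the claim is clear; in regime (ii), $\xi^{(b_{n})}$ is essentially Bernoulli, so the centred steps are almost bounded; in regime (iii), the asymptotics~\eqref{eq:Gderiv} and~\eqref{eq:moments} yield $\E[(\xi^{(b_{n})})^{4}]/\sigma^{4}_{b_{n}} = O(1)$, and Lindeberg follows from Chebyshev at fourth order.

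For the conditioning, I would fix $u \in (0,1)$, set $W^{n}_{t} = (S^{(b_{n})}_{\lfloor nt\rfloor} - m_{b_{n}}\lfloor nt\rfloor)/\sqrt{\scale_{n}}$, and use the Markov property to write, for any bounded continuous $F\colon \D([0,u],\R) \to \R$,
\[\Es{F(W^{n}|_{[0,u]}) \mid S^{(b_{n})}_{n} = x_{n}} = \Es{F(W^{n}|_{[0,u]}) \cdot \frac{\Pr{S^{(b_{n})}_{n-\lfloor nu\rfloor} = x_{n} - S^{(b_{n})}_{\lfloor nu\rfloor}}}{\Pr{S^{(b_{n})}_{n} = x_{n}}}}.\]
The proofs of Theorem~\ref{thm:LTT_forme_generale_hors_stable} adapt verbatim when $n$ is replaced by any $n' \sim n(1-u)$, since they rest ultimately on characteristic function estimates of $\xi^{(b_{n})}$ that are independent of the time. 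Applying the resulting local estimate at times $n$ and $n-\lfloor nu\rfloor$, the above ratio of probabilities converges uniformly on $\{|W^{n}_{u}| \le M\}$ to $(1-u)^{-1/2} \exp(-(W^{n}_{u})^{2}/(2(1-u)))$, which is precisely the Radon--Nikodym derivative of the standard Brownian bridge with respect to Brownian motion on $[0,u]$. Combined with the unconditional CLT, this gives convergence of $W^{n}|_{[0,u]}$ under the conditioning to $B^{\br}|_{[0,u]}$ for every $u < 1$. Tightness on the whole of $[0,1]$ then follows by combining tightness on $[0,1/2]$ with the analogous statement applied to the time-reversed path $(S^{n}_{n} - S^{n}_{n-k})_{0 \le k \le n}$, which is itself a simply generated bridge up to reweighting of the increments.

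For the convergence of the sum of squared increments, I would first check from~\eqref{eq:moments} that $n\E[(\xi^{(b_{n})})^{2}]/\scale_{n}$ converges to $C$ in each regime; under the unconditioned walk, Chebyshev's inequality together with a bound on $\Var((\xi^{(b_{n})})^{2})$ derived from the fourth moment formula then yields $\scale_{n}^{-1}\sum_{k=1}^{n}(X^{(b_{n})}_{k})^{2} \to C$ in probability, and the transfer to the conditioned law proceeds via the same absolute continuity argument. The principal technical hurdle is the uniform application of the local limit theorem at two times simultaneously, together with the moderate deviation estimates needed to exchange limit and expectation on the non-tight part of the density ratio; these refinements are standard extensions of Theorem~\ref{thm:LTT_forme_generale_hors_stable} and are likely postponed to the appendix.
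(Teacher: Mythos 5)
Your proposal follows essentially the same route as the paper: tilt via Lemma~\ref{lem:tilting}, apply the local limit estimates of Theorem~\ref{thm:LTT_forme_generale_hors_stable} at intermediate times to control the Radon--Nikodym density of the bridge with respect to the unconditioned walk, use time reversal to obtain tightness near $t=1$, and transfer the law of large numbers for the squared increments to the bridge by the same change-of-measure argument. The only cosmetic difference is that you establish the unconditional functional CLT via a Lindeberg condition for the triangular array, whereas the paper derives one-dimensional marginal convergence directly from the local limit estimate and then invokes~\cite[Theorem~16.14]{Kal02} to upgrade to functional convergence; these are interchangeable technical choices.
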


As already mentioned, the functional convergence in the first case is essentially contained in~\cite{Lig68}.
On the other hand, to the best of our knowledge, the two other cases are new.
Theorem~\ref{thm:gencrit0_marches} is rather intuitive: if we force the path to end at a very low value, then first $\poids(0)$ has to be nonzero, and as alluded after Theorem~\ref{thm:LTT_forme_generale_hors_stable} the assumption $\poids(1)>0$ prevents some periodicity in the large scale, and finally in the limit the path must not make any large jump, hence a Brownian limit.
On the other hand, in Theorem~\ref{thm:nongenneg_marches}, we condition the path to end at a very high value, and still we obtain a Brownian limit; this unusual behaviour comes from the assumptions on the weights (which, as noted in the introduction, naturally appear when looking at uniform random maps), and Theorem~\ref{thm:stable_marches_condensation} below exhibits a very different behaviour.

\smallskip
In our last main result on these random paths, we study  the boundary case $x_{n}/n \to \Psi(\rho)$ when the latter is finite. To do so, we assume that the probability distribution with generating function $G(\rho \,\cdot)/G(\rho)$ belongs to the domain of attraction of a stable law with index $\alpha \in (1, 2]$, i.e.~that there exist $\alpha \in (1,2]$, $\rho>0$, $m>0$, and a slowly varying function $L$ such that for every $0 \le  s <  \rho$,
\[G(s)=G(\rho) \left(1-m+\frac{ms}{\rho}+\left(1-\frac{s}{\rho}\right)^{\alpha} L\left(\frac{1}{1-s/\rho}\right)\right).\]
In the case $G^{(2)}(\rho) < \infty$, i.e.~when $\alpha=2$ and $L$ admits a finite limit at infinity, say $\ell$, the law with generating function $G(\rho \,\cdot)/G(\rho)$ has a finite variance, equal to
\[\sigma^{2} = 2\ell + m - m^{2}
= \frac{\rho^{2} G^{(2)}(\rho)}{G(\rho)} + \frac{\rho G'(\rho)}{G(\rho)} - \left(\frac{\rho G'(\rho)}{G(\rho)}\right)^{2}
.\]
In this case we set $r_{n} = \sqrt{n \sigma^{2}/2}$ for every $n\ge 1$, otherwise, we let $(r_{n})_{n \ge 1}$ be a sequence such that $r_{n}^{\alpha} \sim n L(r_{n})$ as $n \to \infty$.
It is well-known (see e.g.~\cite[Theorem XVII.5.]{Fel71} and~\cite[Theorem 16.14]{Kal02}) that if $(S_{n})_{n\ge 1}$ is a random walk with i.i.d. increments with generating function $G(\rho \,\cdot)/G(\rho)$, then the convergence
\[\left(\frac{1}{r_{n}} \cdot \left(S_{\floor{nt}} - mnt\right) ; t \ge 0\right)  \cvloi X^{\alpha},\]
holds in distribution in $\D([0,\infty), \R)$. See the remarks around~\eqref{eq:LLT_stable_classique} for the scaling.

Recall from Sec.~\ref{sec:def_ponts} that for $\lambda\in\R$, we denote by $X^{\alpha,\lambda,\br}$ the process $(X^{\alpha}_{t}-\lambda t)_{t}$ conditioned to end at $0$ at time $1$.
Let us also denote by $\Delta X^{\alpha,\lambda,\br}_{t} = X^{\alpha,\lambda,\br}_{t}-X^{\alpha,\lambda,\br}_{t-}$ the value of the `jump' of $X^{\alpha,\lambda,\br}$ at time $t \in [0,1]$, which is null except for a countable set of times.

The functional convergence in distribution in item (iii) below is already known (see the discussion just afterwards), but we include it here for completeness.

\begin{thm}
\label{thm:stable_marches}
Under the preceding assumptions, if $x_{n}$ takes the form
\[x_n = m n + \lambda_{n},\]
where  
$\lambda_{n}/r_{n}$ converges to some $\lambda \in [-\infty,\infty]$, then three cases occur.
\begin{thmlist}
\item\label{thm:stable_marches_drift} If $\lambda \in \R$, then
\[ \left( \frac{1}{r_{n}} \cdot \left(S^{n}_{\floor{nt}} - x_{n} t\right) ; 0 \le  t \le  1\right)  \cvloi X^{\alpha,\lambda,\br}.\]
In addition, 
\[r_{n}^{-2} \sum_{k=1}^{n} (X^{n}_{k})^{2} \cvloi
\begin{dcases}
 2 + \frac{2 m^{2}}{\sigma^{2}} \ind{G^{(2)}(\rho) < \infty} &\textrm{ if } \alpha = 2,\\
\sum_{t \in [0,1]} (\Delta X^{\alpha,\lambda,\br}_{t})^{2}  &\textrm{ if } \alpha<2.
\end{dcases} 
\]

\item\label{thm:stable_marches_gaussien} If $\lambda = -\infty$ and if $\lambda_{n}/n \to 0$, then with $\varepsilon_{n}$ such that $\Psi(\rho(1-\varepsilon_{n})) = x_{n}/n$, we have
\[ \left(\sqrt{\frac{1}{\alpha-1} \frac{\varepsilon_{n}}{|\lambda_{n}|}} \cdot \left(S^{n}_{\floor{nt}} - x_{n} t\right) ; 0 \le  t \le  1\right)  \cvloi B^{\br}.\]
In addition, 
\[\frac{\varepsilon_{n}}{|\lambda_{n}|} \sum_{k=1}^{n} (X^{n}_{k})^{2} \cvproba \alpha-1 + \frac{m^{2}}{\sigma^{2}} \ind{G^{(2)}(\rho)<\infty}.\]

\item\label{thm:stable_marches_condensation} If $\lambda = \infty$, we further assume that $\poids(k)\rho^{-k}$ is regularly varying at infinity with some index $-\beta < -2$. 
If $G^{(2)}(\rho)<\infty$ (which implies $\beta\ge3$), we  furthermore assume that there exists $c>(\beta-3)/\sigma^{2}$ such that $\lambda_{n} \geq \sqrt{c \ln(n)}$. 
Then
\[\left(\frac{1}{\lambda_{n}} \cdot \left(S^{n}_{\floor{nt}} - x_{n} t\right) ; 0 \le  t \le  1\right)  \cvloi (\ind{U \leq t} - t; 0 \leq t \leq 1),\]
where $U$ is uniformly distributed on $[0,1]$. 
In addition,
\[ \frac{\max \{X^{n}_{1},X^{n}_{2}, \ldots, X^{n}_{n}\}^{2} }{\sum_{k=1}^{n} (X^{n}_{k})^{2} }  \cvproba1.\]
\end{thmlist}
\end{thm}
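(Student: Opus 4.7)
The plan is to treat the three regimes separately, each by combining the Cram\'er tilt of Lemma~\ref{lem:tilting} with the appropriate local limit estimate from Sec.~\ref{sec:LLT} through a standard absolute-continuity identity, and then extending tightness from $[0,u]$ to $[0,1]$ by time-reversal.

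\emph{Regime (i).} Since $\lambda_{n}/n\to 0$, I take $b=\rho$ in Lemma~\ref{lem:tilting}: this identifies $\P^{\boldsymbol{\poids},n}_{x_{n}}$ with the bridge law of a random walk $S$ whose step distribution is $G(\rho\,\cdot)/G(\rho)$, of mean $m$ and in the domain of attraction of $X^{\alpha}$ at scale $r_{n}$. For $u\in(0,1)$ and bounded continuous $F$ on $\D([0,u],\R)$, the Markov property gives
\[
\E^{\boldsymbol{\poids},n}_{x_{n}}\!\bigl[F\bigl(r_{n}^{-1}(S_{\lfloor n\cdot\rfloor}-x_{n}\cdot)\bigr)\bigr]
=\E\!\left[F\bigl(r_{n}^{-1}(S_{\lfloor n\cdot\rfloor}-x_{n}\cdot)\bigr)\,\frac{\Pr{S_{n-\lfloor nu\rfloor}=x_{n}-S_{\lfloor nu\rfloor}}}{\Pr{S_{n}=x_{n}}}\right].
\]
Donsker's theorem for $S$ combined with the stable local limit theorem~\eqref{eq:LLT_stable_classique} (applied uniformly) shows that the right-hand side converges to $\E\!\left[F(X^{\alpha}_{\cdot}-\lambda\,\cdot)\,d^{\alpha}_{1-u}(\lambda-X^{\alpha}_{u})/d^{\alpha}_{1}(\lambda)\right]$, which matches the defining identity~\eqref{eq:lawbridgestable} of $X^{\alpha,\lambda,\br}$. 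For the sum of squares: when $\alpha=2$, an $L^{1}$ law of large numbers under the bridge gives $n^{-1}\sum(X^{n}_{k})^{2}\to\E[(\xi^{(\rho)})^{2}]$, and dividing by $r_{n}^{2}$ yields the stated constant; when $\alpha<2$, the sum-of-squared-jumps functional is $J_{1}$-continuous almost surely at the limiting bridge, giving the claimed convergence to $\sum_{t}(\Delta X^{\alpha,\lambda,\br}_{t})^{2}$.

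\emph{Regime (ii).} I apply Lemma~\ref{lem:tilting} with $b=b_{n}=\rho(1-\varepsilon_{n})$, so that $\xi^{(b_{n})}$ has mean $x_{n}/n$ and $\P^{\boldsymbol{\poids},n}_{x_{n}}$ becomes the bridge law of $S^{(b_{n})}$. The moment estimates~\eqref{eq:moment_3_scaling_distorted-new}--\eqref{eq:equiv_scaling_distorted-new} verify a Lindeberg condition for the triangular array $(\xi^{(b_{n})}_{i}-x_{n}/n)/\scale_{n}$, yielding unconditional Brownian convergence of $\scale_{n}^{-1}(S^{(b_{n})}_{\lfloor n\cdot\rfloor}-x_{n}\cdot)$. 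The uniform local limit bound of Theorem~\ref{thm:lls1} then lifts this, via the same Markovian identity as in (i), to convergence of the bridge towards $B^{\br}$. An $L^{1}$ law of large numbers for $\sum(X^{n}_{k})^{2}$ together with the asymptotics of $\E[(\xi^{(b_{n})})^{2}]$ from~\eqref{eq:moment_3_scaling_distorted-new} produces the claimed deterministic limit, the additional term $m^{2}/\sigma^{2}$ appearing only when $G^{(2)}(\rho)<\infty$ because in that case the typical square-increment is dominated by the square of the (bounded) mean.

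\emph{Regime (iii).} After tilting by $\rho$, the regular variation of $\poids(k)\rho^{-k}$ with index $-\beta<-2$ places the problem in the subexponential setting. A one-big-jump principle in the spirit of Denisov--Dieker--Schneer~\cite{DDS08} yields $\Pr{S_{n}=x_{n}}\sim n\,\poids(\lfloor\lambda_{n}\rfloor)\rho^{-\lambda_{n}}/G(\rho)$, with the typical configuration having a single index $K_{n}$ carrying an increment close to $\lambda_{n}$ while the remaining $n-1$ increments form a typical bridge of order $r_{n}=o(\lambda_{n})$. A second-moment computation shows that, conditionally on $\{S_{n}=x_{n}\}$, this maximal index is asymptotically uniform on $\{1,\dots,n\}$ and $X^{n}_{K_{n}}/\lambda_{n}\to 1$ in probability; this yields both the convergence of $\lambda_{n}^{-1}(S^{n}_{\lfloor n\cdot\rfloor}-x_{n}\cdot)$ to the path $t\mapsto\ind{U\le t}-t$ and the $\max^{2}/\sum$ statement. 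The logarithmic threshold $\lambda_{n}\ge\sqrt{c\log n}$ in the finite-variance case is the classical Nagaev--Fuk cutoff beyond which the single-big-jump contribution dominates the Gaussian alternative $\P(S_{n}-mn\approx\lambda_{n}$ by accumulation of small deviations$)$. The most delicate step is indeed this regime: one must sharpen the one-big-jump estimate enough to pin down the law of $K_{n}$ and to confirm that the explicit logarithmic threshold is the correct boundary between condensation and Gaussian behaviour.
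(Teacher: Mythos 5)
Your overall strategy for the \emph{functional} convergences is the one the paper follows: exponentially tilt with $b=\rho$ in regimes (i) and (iii), and with $b_{n}=\rho(1-\varepsilon_{n})$ in regime (ii); combine the resulting local limit theorem (the classical stable LLT~\eqref{eq:LLT_stable_classique} in (i), Theorem~\ref{thm:lls1} in (ii)) with the absolute-continuity/Markov identity to pass from the unconditioned walk to the bridge on $[0,u]$; then use time-reversal for tightness on all of $[0,1]$. This is exactly Propositions~\ref{prop:LLT_implique_convergence_loi_fixe} and~\ref{prop:LLT_implique_convergence_loi_varie}. Where your sketch goes off the rails is in the secondary claims.

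In regime (i) with $\alpha<2$, the assertion that ``the sum-of-squared-jumps functional is $J_{1}$-continuous almost surely at the limiting bridge'' is false: the functional $f\mapsto\sum_{t}(\Delta f(t))^{2}$ is not $J_{1}$-continuous at \emph{any} path, because one can perturb $f$ by $O(n)$ jumps of size $O(n^{-1/2})$ which converge to $0$ in $J_{1}$ while contributing $\Theta(1)$ to the sum of squared jumps. What $J_{1}$-convergence gives you is joint convergence of any fixed number $N$ of largest increments; one must separately control the tail. The paper does this by observing that $\bigl(r_{n}^{-2}\sum_{k\le\lfloor nt\rfloor}\widehat{X}_{k}^{2}\bigr)_{t}$ converges unconditionally to a stable subordinator of index $\alpha/2$ (so the tail beyond the $N$ largest terms is small with high probability), and then transferring this to the bridge via the boundedness of $d^{\alpha}_{1-u}$ together with time-reversal. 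Similarly, in regime (i) with $\alpha=2$ and $G^{(2)}(\rho)=\infty$, a plain $L^{1}$ law of large numbers does not apply (the second moment of $\widehat{X}^{2}$ is infinite); the paper's Proposition~\ref{prop:ctealpha2} performs a truncation argument controlling the truncated second moment $L_{1}(x)=\E[\widehat{X}^{2}\ind{|\widehat{X}|\le x}]$ and uses $L_{1}/L\to2$. Your Lindeberg/moment argument for regime (ii) is closer to what is done (Proposition~\ref{prop:cte_nongenstable_lambda_moins_infini}), but you still need to split the case $G^{(2)}(\rho)<\infty$, where a fourth-moment bound is unavailable, and argue by absolute continuity with the $b=\rho$ tilt.

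For regime (iii), you end by admitting that the ``most delicate step'' — pinning down the law of the index of the big jump and justifying the logarithmic threshold — remains to be carried out; that is precisely the content of the proof. The paper does not re-derive these facts by a second-moment computation; it invokes a ready-made total-variation result (Proposition~\ref{prop:dTV}, which is Armend\'ariz--Loulakis~\cite{AL11} combined with Denisov--Dieker--Schneer~\cite{DDS08}, whose hypotheses are exactly where the threshold $\lambda_{n}\ge\sqrt{c\ln n}$ with $c>(\beta-3)/\sigma^{2}$ comes from). From that total-variation statement one reads off at once that $V_{n}$ is asymptotically uniform (by exchangeability), that $Y^{n}_{V_{n}}/\lambda_{n}\to1$ and that the other increments contribute $O_{\P}(r_{n}^{2})=o(\lambda_{n}^{2})$ to $\sum(X^{n}_{k})^{2}$, which gives both displayed claims. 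Without some such input the proposal does not establish the theorem in regime (iii).
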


Let us make some comments. In the first regime, we condition the path $S^{n}$ to end at a value which only differs from its average behaviour by some constant $\lambda$ times $r_{n}$, which is the scaling of the natural fluctuations as we recalled before the statement, so the scaling remains the same and the limit is simply the same L\'evy process as in the unconditioned case, but conditioned to end at $\lambda$. In the second regime however we condition the path to end at a much lower value (we assume $\lambda_{n}/n \to 0$ as otherwise we fall in the bulk regime of Theorem~\ref{thm:gencrit_marches}) and this completely changes the scaling, from $1/r_{n}$ to $\sqrt{\varepsilon_{n}/|\lambda_{n}|}$, which, as we shall later see, is of order $1/r_{n}$ in the finite variance case, but larger otherwise;
this wipes away the natural behaviour of $G$ and it yields in fact a Gaussian limit at this scaling, which can be understood intuitively, as for Theorem~\ref{thm:gencrit0_marches}, by the fact that, in order to end at a low value, the path must avoid any large jump.

Finally, in the last regime, a `one big jump principle' holds, which contrasts with Theorem~\ref{thm:nongenneg_marches}. The functional convergence in this regime is already known (see e.g.~\cite[Theorem 6.4.1]{BB08}). However we will need a more detailed analysis, which follows from a direct application of the results from~\cite{AL11,DDS08}:
In addition to a unique giant increment, of size asymptotically equivalent to $\lambda_{n}$ (as the combination of the two convergences shows), occurring at a random time asymptotically uniformly distributed, the $n-1$ other increments are close (in total variation) to just i.i.d. random variables, with generating function $G(\rho \,\cdot)/G(\rho)$, so the path obtained by removing this giant increment stays in the scale $r_{n}$. 
We refer to Sec.~\ref{sec:preuve_condensation} below for details.

\subsection{Functional convergence from a local limit estimate}
\label{sec:LLT_implique_excursion_brownienne}

Let us now prove how the L\'evy bridges appear as limits of our discrete paths, based on the local limit estimates from Sec.~\ref{sec:LLT}. Since the same argument applies to prove most of them, we present a general result that we then apply to our case. We denote by $X$  a stable Lévy process with a linear drift, with a density $d_{t}$ at time $t$, and we let $X^{\br}$ denote a version of $X$ conditioned on $X_{1} = 0$, whose definition was given in Sec.~\ref{sec:def_ponts}.
For the proof of Theorems~\ref{thm:gencrit_marches},~\ref{thm:gencrit0_marches},~\ref{thm:nongenneg_marches}, and~\ref{thm:stable_marches_gaussien} we shall take $X$ to simply be a standard Brownian motion without drift.

\begin{prop}[Subcritical conditioning]
\label{prop:LLT_implique_convergence_loi_varie}
Suppose that $x_{n}/n < \Psi(\rho)$ for every $n$ large enough and let $b_{n} = \Psi^{-1}(x_{n}/n)$. Assume that there exists a sequence $\scale_{n} \to \infty$ such that for every $t > 0$,
\begin{equation}\label{eq:LLT_S_varie}
\sup_{k \ge 0} \left|\scale_{n} \cdot \frac{b_{n}^{k}}{G(b_{n})^{\floor{nt}}} \cdot \Pr{S^{n}_{\floor{nt}} = k} - d_{t}\left(\frac{k-x_{n}t}{\scale_{n}}\right)\right| \cv 0.
\end{equation}
Then the convergence
\[\left( \frac{1}{\scale_{n}} \left(S^{n}_{\lfloor n t \rfloor} - x_{n} t\right)  ; 0 \le  t \le  1\right)   \cvloi X^{\br}\]
holds in distribution in $\D([0,1],\R)$.
\end{prop}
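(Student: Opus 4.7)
The plan is to interpret the simply generated bridge as an honest random walk bridge via the exponential tilt at $b=b_n$, establish a functional invariance principle for the resulting unconditioned tilted walk, and then transfer to the bridge using the absolute continuity relation~\eqref{eq:lawbridgestable}; the pinned endpoint $t=1$ is handled by a time-reversal argument based on exchangeability of increments.

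Invoking Lemma~\ref{lem:tilting} with $b=b_n$, I identify $S^n$ under $\P^{\boldsymbol{\poids},n}_{x_n}$ with the random walk $\tilde{S}^n$ having i.i.d.\ increments of law $\xi^{(b_n)}$ (generating function $G(b_n \cdot)/G(b_n)$, mean $x_n/n$) conditioned on $\tilde{S}^n_n = x_n$. The tilting identity $\P(\tilde{S}^n_m = k) = b_n^k G(b_n)^{-m} \P(S^n_m = k)$ rewrites the hypothesis as the genuine local limit theorem
\[\sup_{k \ge 0} \left|\scale_n \cdot \P(\tilde{S}^n_{\lfloor nt \rfloor} = k) - d_t\!\left(\tfrac{k - x_n t}{\scale_n}\right)\right| \cv 0, \qquad t>0.\]
Integrating against bounded continuous test functions yields the weak marginal convergence $Z^n_t := \scale_n^{-1}(\tilde{S}^n_{\lfloor nt \rfloor} - x_n t) \cvloi X_t$ for every $t>0$. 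Independence of the increments of $\tilde{S}^n$ then gives joint convergence of finite-dimensional marginals, and standard functional limit theorems for row-wise i.i.d.\ triangular arrays converging to a L\'evy process (see~\cite[Ch.~VII]{JS03}) upgrade this to the unconditioned invariance principle $Z^n \cvloi X$ in $\D([0,1], \R)$.

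Next, fix $u \in (0,1)$ and a bounded continuous functional $F$ on $\D([0,u], \R)$. By the Markov property at time $\lfloor n u \rfloor$,
\[\E\!\left[F(Z^n_s ; s \le u) \,\big|\, \tilde{S}^n_n = x_n\right] = \E\!\left[F(Z^n_s ; s \le u) \cdot \frac{\P\!\left(\tilde{S}^{n,\prime}_{n - \lfloor n u \rfloor} = x_n - \tilde{S}^n_{\lfloor n u \rfloor}\right)}{\P(\tilde{S}^n_n = x_n)}\right],\]
with $\tilde{S}^{n,\prime}$ an independent copy of $\tilde{S}^n$. Applying the LLT at $t=1$ and at $t=1-u$, the ratio is bounded by a constant $C_u$ depending only on $u$ and, on $\{|Z^n_u| \le K\}$, converges uniformly to $d_{1-u}(-Z^n_u)/d_1(0)$. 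Combining with $Z^n \cvloi X$ and Slutsky's theorem, the right-hand side converges to $\E[F(X_s ; s \le u)\, d_{1-u}(-X_u)/d_1(0)]$, which by~\eqref{eq:lawbridgestable} is exactly $\E[F(X^{\br}_s ; s \le u)]$. The same ratio bound gives the absolute continuity estimate $\P(A \mid \tilde{S}^n_n = x_n) \le C_u \, \P(A)$ for every event $A$ measurable with respect to $(Z^n_s)_{s \le u}$, so the tightness of the unconditioned sequence $Z^n$ transfers to the conditioned law on $[0,u]$.

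It remains to handle the pinned endpoint, where the density ratio $d_{1-u}/d_1(0)$ blows up as $u \uparrow 1$ and the absolute continuity argument degenerates. This is the main technical obstacle of the proof. The remedy is exchangeability: since the increments of $\tilde{S}^n$ are i.i.d., the time-reversed path $k \mapsto \tilde{S}^n_n - \tilde{S}^n_{n-k}$ has, under $\{\tilde{S}^n_n = x_n\}$, the same conditional distribution as the original bridge. Applying the previous step to the reversed process yields tightness and convergence of the restriction to $[1-u, 1]$ for every $u \in (0,1)$; combining with the earlier range gives tightness on $[0,1]$, and together with the finite-dimensional convergence produces the claimed functional convergence of $(Z^n_t)_{0 \le t \le 1}$ to $X^{\br}$ in $\D([0,1], \R)$.
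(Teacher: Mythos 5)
Your proposal is correct and follows essentially the same route as the paper: identify the bridge with the tilted random walk conditioned on its endpoint via Lemma~\ref{lem:tilting}, deduce the unconditioned invariance principle from the local limit hypothesis, transfer to the bridge on $[0,u]$ via the absolute continuity relation~\eqref{eq:lawbridgestable}, and then handle the pinned endpoint by time-reversal of the exchangeable increments. The only cosmetic difference is that you invoke \cite[Ch.~VII]{JS03} for the step from one-dimensional marginals to the full unconditioned functional convergence whereas the paper cites \cite[Theorem~16.14]{Kal02}; both are standard and interchangeable here.
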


\begin{proof}
By construction of $b_{n}$, we can define a random variable $\widehat{X}^{n}$ by setting for every $k \ge 0$,
\[\P\big(\widehat{X}^{n} = k\big) = \frac{1}{G(b_{n})} b_{n}^{k} \poids(k),
\qquad\text{so}\qquad
\E\big[\widehat{X}^{n}\big]
= \Psi(b_{n})
= \frac{x_{n}}{n}.\]
Furthermore, according to Lemma~\ref{lem:tilting}, the random walk $\widehat{S}^{n} = (\widehat{S}^{n}_{i} ; 0 \le i \le n)$ with step distribution $\widehat{X}^{n}$, when conditioned on $\widehat{S}^{n}_{n} = x_{n}$, has the same law as our original path $S^{n}$.
Now the local limit estimate~\eqref{eq:LLT_S_varie} can be rewritten as
\begin{equation}\label{eq:LLT_S_varie_bis}
\sup_{k \ge 0} \left|\scale_{n} \cdot \Pr{\widehat{S}^{n}_{\floor{nt}} = k} - d_{t}\left(\frac{k-x_{n}t}{\scale_{n}}\right)\right| \cv 0.
\end{equation}
Then this implies the convergence in distribution of $\scale_{n}^{-1} (\widehat{S}^{n}_{\floor{nt}} - x_{n} t)$ to $X_{t}$, under the unconditional law, for any $t>0$ fixed, which, by e.g.~\cite[Theorem~16.14]{Kal02}, actually suffices to conclude to the convergence of the whole path, still under the unconditional law, namely
\begin{equation}\label{eq:cv_S_non_conditionne}
\left(\frac{1}{\scale_{n}} \left(\widehat{S}^{n}_{\floor{nt}} - x_{n} t\right) ; 0 \le t \le 1\right) \cvloi (X_{t} ; 0 \le t \le 1).
\end{equation}

We deduce the convergence of bridges on any time interval $[0,u]$ with $u \in (0,1)$ by absolute continuity. Indeed, let us set $\varphi^{n}_{i}(k) = \P(\widehat{S}^{n}_{i} = k)$ for every $i$ and $k$ and let $F : \D([0,u],\R) \to \R$ be a bounded continuous functional, then by~\eqref{eq:LLT_S_varie_bis} first and then~\eqref{eq:cv_S_non_conditionne},
\begin{align*}
&\Es{F\left(\frac{1}{\scale_{n}} \left(\widehat{S}^{n}_{\floor{nt}} - x_{n} t\right) ; 0 \le t \le u\right) \;\middle|\; \widehat{S}^{n}_{n}=x_{n}}
\\
&= \Es{F\left(\frac{1}{\scale_{n}} \left(\widehat{S}^{n}_{\floor{nt}} - x_{n} t\right) ; 0 \le t \le u\right) \frac{\varphi^{n}_{n-\floor{nu}}(x_{n} -\widehat{S}^{n}_{\floor{nu}})}{\varphi^{n}_{n}(x_{n})}}
\\
&= \Es{F\left(\frac{1}{\scale_{n}} \left(\widehat{S}^{n}_{\floor{nt}} - x_{n} t\right) ; 0 \le t \le u\right) \frac{d_{1-u}(- \scale_{n}^{-1} (\widehat{S}^{n}_{\floor{nu}} - x_{n} u))}{d_{1}(0)}} (1+o(1))
\\
&= \Es{F\left(X_{t} ; 0 \le t \le u\right) \frac{d_{1-u}(-X_{u})}{d_{1}(0)}} (1+o(1)).
\end{align*}
Notice that $(\widehat{S}^{n}_{n}-\widehat{S}^{n}_{n-i})_{ 0 \le  i \le  n}$ has the same distribution as $(\widehat{S}^{n}_{i})_{ 0 \le  i \le  n}$ under $\P(\,\cdot \mid \widehat{S}^{n}_{n}=x_{n})$, so the convergence of bridges also holds on the time interval $[u,1]$. In particular, the portion of the path on $[u,1]$ is tight and we may extend the preceding convergence in distribution of the bridges to the whole interval $[0,1]$. 
This shows our claim when $S^{n}$ is replaced by the bridge of $\widehat{S}^{n}$ and we conclude the proof by recalling that these paths have the same law.
\end{proof}

With this result at hand and the local estimates from Sec.~\ref{sec:LLT}, we can easily prove some of our main results.
\begin{proof}[Proof of Theorems~\ref{thm:CVmarches} and~\ref{thm:stable_marches_gaussien}]
Let us first prove Theorem~\ref{thm:gencrit_marches}; under its assumptions, we deduce from Theorem~\ref{thm:LTT_bulk} and Proposition~\ref{prop:LLT_implique_convergence_loi_varie}, the convergence
\[\left( \frac{1}{\sqrt{\widehat{\sigma}_{n}^{2}  n}} \left(S^{n}_{\lfloor n t \rfloor} - x_{n} t\right)  ; 0 \le  t \le  1\right)   \cvloi B^{\br},\]
where $\widehat{\sigma}_{n}^{2}$ is the variance of $\widehat{X}^{n}$ in the above notation, which is given by~\eqref{eq:moments}; by continuity it converges to the same expression with $b$ instead of $b_{n}$, i.e.~$\sigma^{2}_{b}$ in the notation of Theorem~\ref{thm:gencrit_marches}.
Similarly, Theorems~\ref{thm:gencrit0_marches},~\ref{thm:nongenneg_marches}, and~\ref{thm:stable_marches_gaussien} immediately follow by combining Proposition~\ref{prop:LLT_implique_convergence_loi_varie} with Theorem~\ref{thm:LTT_small_endpoint}, Theorem~\ref{thm:LTT_large_endpoint}, and Theorem~\ref{thm:lls1} respectively.

In each case, the proof of the convergence of the sum of the increments squared is deferred to Appendix~\ref{sec:preuves_LLT}, see Proposition~\ref{prop:ctegeneric}, Proposition~\ref{prop:ctegencritfewvertices}, Proposition~\ref{prop:ctenongenericnegativeindex}, and Proposition~\ref{prop:cte_nongenstable_lambda_moins_infini} respectively. In each regime, the basic idea is to control the first four moments of $\widehat{X}^{n}$, which are expressed in~\eqref{eq:moments}.
\end{proof}

We next modify the preceding statement in order to control the case where $x_{n}/n$ is slightly larger than $\Psi(\rho)$, which will enable us to treat Theorem~\ref{thm:stable_marches_drift}.
For $c \in \R$ and $X$ the preceding L\'evy process, recall that we define $X^{c,\br}$ as the process $(X_{t} - ct ; 0 \le t \le 1)$ conditioned to end at $0$.

\begin{prop}[Critical conditioning]
\label{prop:LLT_implique_convergence_loi_fixe}
Assume that $\Psi(\rho) < \infty$ and that there exists a sequence $\scale_{n} \to \infty$ such that for every $t > 0$,
\[\sup_{k \ge 0} \left|\scale_{n} \cdot \frac{\rho^{k}}{G(\rho)^{\floor{nt}}} \cdot \Pr{S^{n}_{\floor{nt}} = k} - d_{t}\left(\frac{k-\Psi(\rho) n t}{\scale_{n}}\right)\right| \cv 0.
\]
If there exists $c \in \R$ such that $\scale_{n}^{-1} (x_{n} - \Psi(\rho) n) \to c$, then the convergence
\[\left( \frac{1}{\scale_{n}} \left(S^{n}_{\lfloor n t \rfloor} - x_{n} t\right)  ; 0 \le  t \le  1\right)   \cvloi X^{c, \br}\]
holds in distribution in $\D([0,1])$.
\end{prop}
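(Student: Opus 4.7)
The plan is to follow the same strategy as Proposition~\ref{prop:LLT_implique_convergence_loi_varie}, with the tilting parameter fixed to $\rho$ (since the subcritical $b_n = \Psi^{-1}(x_n/n)$ is unavailable here) and tracking how the residual $(x_n - \Psi(\rho)n)/\scale_n \to c$ enters the limit as an additional drift. Because $\Psi(\rho) < \infty$ forces $G(\rho) < \infty$, Lemma~\ref{lem:tilting} applies with $b = \rho$: the law defined by $\P(\widehat{X}^n = k) = \rho^k \poids(k)/G(\rho)$ is a probability measure of mean $\Psi(\rho)$, and the random walk $\widehat{S}^n$ built from i.i.d. copies satisfies that, conditioned on $\widehat{S}^n_n = x_n$, it has law $\P^{\boldsymbol{\poids},n}_{x_n}$. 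Thus it is enough to prove the claim for this conditioned bridge, and the assumed estimate rewrites as $\sup_{k \ge 0}|\scale_n \P(\widehat{S}^n_{\lfloor nt \rfloor} = k) - d_t((k - \Psi(\rho)n t)/\scale_n)| \to 0$ for every $t > 0$.

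Combined with $(x_n - \Psi(\rho) n)/\scale_n \to c$, this forces $\scale_n^{-1}(\widehat{S}^n_{\lfloor nt \rfloor} - x_n t) \cvloi X_t - ct$ for each fixed $t \in (0,1]$ under the unconditional law; since $\widehat{S}^n$ is a random walk, \cite[Theorem~16.14]{Kal02} upgrades this to the functional convergence
\[\left(\frac{\widehat{S}^n_{\lfloor nt\rfloor} - x_n t}{\scale_n}; 0 \le t \le 1\right) \cvloi (X_t - ct; 0 \le t \le 1)\]
in $\D([0,1],\R)$ under the unconditional law.

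The bridge case follows by absolute continuity. Fix $u \in (0,1)$ and a bounded continuous $F : \D([0,u],\R) \to \R$; by the Markov property,
\[\Es{F\left(\frac{\widehat{S}^n_{\lfloor nt \rfloor} - x_n t}{\scale_n}; 0 \le t \le u\right) \;\middle|\; \widehat{S}^n_n = x_n} = \Es{F(\cdots) \cdot \frac{\scale_n \P(\widehat{S}^n_{n - \lfloor nu \rfloor} = x_n - \widehat{S}^n_{\lfloor nu \rfloor})}{\scale_n \P(\widehat{S}^n_n = x_n)}},\]
where the right-hand expectation is unconditional. The rescaled argument entering the numerator density reads
\[\frac{x_n - \widehat{S}^n_{\lfloor nu\rfloor} - \Psi(\rho)(n - \lfloor nu \rfloor)}{\scale_n} = (1-u)\,\frac{x_n - \Psi(\rho) n}{\scale_n} - \frac{\widehat{S}^n_{\lfloor nu\rfloor} - x_n u}{\scale_n} + o(1),\]
which converges jointly with the rescaled path to $c(1-u) - (X_u - cu) = c - X_u$. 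The local limit estimate provides a uniform bound on $\scale_n \P(\widehat{S}^n_m = \cdot)$ while $\scale_n \P(\widehat{S}^n_n = x_n) \to d_1(c) > 0$, so the weight is uniformly bounded and converges to $d_{1-u}(c - X_u)/d_1(c)$. Dominated convergence together with the characterisation~\eqref{eq:lawbridgestable} identify the limit with $\E[F(X^{c,\br}_s; 0 \le s \le u)]$.

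To pass from $[0,u]$ to $[0,1]$, observe that under $\P(\cdot \mid \widehat{S}^n_n = x_n)$ the time-reversed path $(\widehat{S}^n_n - \widehat{S}^n_{n-i})_{0 \le i \le n}$ again has law $\P^{\boldsymbol{\poids},n}_{x_n}$; applying the preceding step to the reversed bridge controls the trajectory on $[u,1]$ and in particular supplies tightness near time $1$, so splicing yields the full convergence in $\D([0,1],\R)$. The main technical point throughout is the uniform boundedness of the density ratio needed for dominated convergence, which is delivered by the uniformity in $k$ of the local limit estimate together with the strict positivity $d_1(c) > 0$ of the limiting density.
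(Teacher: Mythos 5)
Your proposal is correct and follows the same approach as the paper: tilt with $b=\rho$, upgrade the one-dimensional local limit estimate to a functional convergence of the unconditioned walk via Kallenberg's Theorem~16.14, then transfer to the bridge by absolute continuity on $[0,u]$ and time-reversal to cover $[u,1]$. You have simply spelled out the "as previously" that the paper delegates to the proof of Proposition~\ref{prop:LLT_implique_convergence_loi_varie}, including the (correct) computation showing that the density ratio converges to $d_{1-u}(c-X_u)/d_1(c)$ rather than $d_{1-u}(-X_u)/d_1(0)$ due to the drift $c$.
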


\begin{proof}
Let us replace $b_{n}$ in the preceding proof by $\rho$, i.e.~define a random walk $\widehat{S}$ with step distribution $\widehat{X}$ given by
\[\P\big(\widehat{X} = k\big) = \frac{1}{G(\rho)} \rho^{k} \poids_{k}
\qquad\text{for every}\qquad k \ge 0.\]
Note that its mean equals $\E[\widehat{X}] = \Psi(\rho)$, then the argument that lead to~\eqref{eq:cv_S_non_conditionne} applies here and we deduce, using also $\scale_{n}^{-1} (x_{n} - \Psi(\rho) n) \to c$, that
\[\left(\frac{1}{\scale_{n}} \left(\widehat{S}_{\floor{nt}} - x_{n} t\right) ; 0 \le t \le 1\right) \cvloi (X_{t} - ct ; 0 \le t \le 1),\]
under the unconditional law.
As previously, this then transfers to the bridge conditioning by absolute continuity and time-reversal.
\end{proof}

The convergence of paths in Theorem~\ref{thm:stable_marches_drift} then immediately follows as the preceding theorems.

\begin{proof}[Proof of Theorem~\ref{thm:stable_marches_drift}]
As the preceding case, we deduce immediately from~\eqref{eq:LLT_stable_classique} and Proposition~\ref{prop:LLT_implique_convergence_loi_fixe} the convergence
\[\left(\frac{1}{r_{n}} \left(S^{n}_{\lfloor n t \rfloor} - x_{n} t\right)  ; 0 \le  t \le  1\right)   \cvloi X^{\alpha,\lambda,\br}.\]
Let us next prove the convergence of the sum of the squares of the increments in the case $\alpha \in (1,2)$. 
The preceding convergence can be equivalently written as
\[\left(\frac{1}{r_{n}} \left(S^{n}_{\lfloor n t \rfloor} - mn t\right)  ; 0 \le  t \le  1\right)  \cvloi \left(X^{\alpha,\lambda,\br}_{t} -\lambda t  ; 0 \le  t \le  1\right),\]
for the Skorokhod topology. Let us denote by $X^{\alpha,\lambda}$ the process on the right, which is informally the stable process $X^{\alpha}$ conditioned on $X^{\alpha}_{1} = \lambda$. Note that its jumps are the same as those of $X^{\alpha,\lambda,\br}$.

This convergence implies that for every $N$ fixed, the $N$ largest values amongst $\{r_{n}^{-1} (X^{n}_{k} - m), 1 \le k \le n\}$ jointly converge towards the values of the $N$ largest jumps of $X^{\alpha,\lambda}$. Consequently the sum of the squares of the former converge to that of the latter.
Recall that we assume $\alpha \in (1,2)$ so $\sum_{k=1}^{n} X^{n}_{k} = x_{n} \sim m n = o(r_{n}^{2})$ and thus for every $N \ge 1$ fixed, the sum of the largest $N$ largest values amongst $\{r_{n}^{-2} (X^{n}_{k})^{2}, 1 \le k \le n\}$ converges towards the sum of the  $N$ largest jumps squared of $X^{\alpha,\lambda}$.
We next argue that one can fix $N$ large such that both rests are small with high probability, which implies our claim.

Recall that $S^{n}$ has the same law as the random walk $\widehat{S}$ conditioned on $\widehat{S}_{n} = x_{n}$ so we may replace the $X^{n}_{k}$'s by the increments $\widehat{X}_{k}$'s of the latter, under the conditional law.
We first notice that this holds true for the unconditioned processes. Indeed it is well-known that setting $(S^{\alpha}_{t})_{t \in [0,1]} = (\sum_{s\le t} (\Delta X^{\alpha}_{t})^2)_{t \in [0,1]}$ defines a stable subordinator with index $\alpha/2$, which is a pure jump process. Moreover, from the tail behaviour of $\widehat{X}^{2}$, one can check  (using e.g.~\cite[Theorem XVII.5.]{Fel71} and~\cite[Theorem 16.14]{Kal02}) that the rescaled process $(r_{n}^{-2} \sum_{k=1}^{\floor{nt}} (\widehat{X}_{k})^{2})_{t \in [0,1]}$ converges in distribution towards $(S^{\alpha}_{t})_{t \in [0,1]}$. Note that no centering is needed here since $\alpha/2 < 1$.
Then almost surely, $S^{\alpha}_{1/2} = \sum_{s\le 1/2} \Delta S^{\alpha}_{s}$ and one can fix $N$ such that the sum on the right, when the $N$ largest terms are excluded, is arbitrarily small with high probability. Using the preceding convergence, one gets that the same holds for the sequence $(\widehat{X}_{k})_{k \le \floor{n/2}}$.

Next, these properties are transferred to the bridge conditioning by the absolute continuity with the unconditioned process: There exists $C>0$ such that, both for the discrete and the continuum process, the probability of any event which only depends on the first half of the bridge is bounded by $C$ times the same probability for the unconditioned process. For $X^{\alpha}$, this follows from the fact that the density $d^{\alpha}_{t}$ is a bounded function for any $t>0$, whereas for $\widehat{S}$ one also appeals to the local limit theorem~\eqref{eq:LLT_stable_classique}. Using the invariance of the bridges under space-time reversal, we deduce that for every $\varepsilon, \delta > 0$, one can fix $N$ such that for every $n$ large enough, for each half, the sum of the square of the increments excepted the $N$ largest ones are smaller than $\delta$ with probability at least $1-\varepsilon$ and the proof is complete.

The convergence of the sum of the squares of the increments  in the case $\alpha=2$ is deferred to Proposition~\ref{prop:ctealpha2}.
\end{proof}

\subsection{The condensation phase}
\label{sec:preuve_condensation}

Let us finally focus on Theorem~\ref{thm:stable_marches_condensation}: Here we assume that $x_{n}=mn+ \lambda_{n}$ with $\lambda_{n}/r_{n} \to \infty$ and that $\poids(k)\rho^{-k}$ is regularly varying at infinity with some index $-\beta < -2$. 
If $G^{(2)}(\rho)<\infty$ (which implies $\beta\ge3$), we assume furthermore that there exists $c>(\beta-3)/\sigma^{2}$ such that $\lambda_{n} \geq \sqrt{c \ln(n)}$. 

For every $1 \le i \le n$, let us set $Y_{i}^{n} = X_{i}^{n} - m$ and then $T^{n}_{i} = Y_{1}^{n} + \dots + Y_{i}^{n} = S^{n}_{i} - im$.
The first claim is equivalent to the convergence 
\begin{equation}\label{eq:cv_condensation_centre}
\left(\frac{1}{\lambda_{n}} \cdot T^{n}_{\floor{nt}} ; 0 \le  t \le  1\right)  \cvloi (\ind{U \leq t}; 0 \leq t \leq 1),
\end{equation}
where $U$ is uniformly distributed on $[0,1]$. 
This is a straightforward consequence of a much more precise result.
Precisely, let
\[V_n \coloneqq \inf \left\{1\leq j \leq n : Y^{n}_j=\max\{Y^{n}_i : 1\leq i \leq n\} \right\}\]
be the first index of the maximal increment of $(T^{n}_{1}, \ldots,T^{n}_{n})$. 
Let $(Y_{i})_{i \geq 1}$ be a sequence of i.i.d.~random variables with distribution given by
\[\Pr{Y_{1} = k-m} = \frac{1}{G(\rho)} \rho^{k} \poids(k)
\qquad\text{for every}\quad k \ge 0.\]
The following result is a direct application of~\cite[Theorem 1]{AL11}, combined with~\cite[Theorem 8.1]{DDS08} (in the case $G^{(2)}(\rho)<\infty)$ and~\cite[Theorem 9.1]{DDS08}  (in the case $G^{(2)}(\rho)=\infty$).

\begin{prop}\label{prop:dTV}
We have
\[d_{\mathrm{TV}}\left( \big( Y^{n}_{1}, \ldots, Y^{n}_{V_{n}-1},Y^{n}_{V_{n}+1}, \ldots, Y^{n}_{n} \big),  \big(Y_{1}, \dots, Y_{n-1}\big) \right)  \cv 0,\]
where $d_{\mathrm{TV}}$ denotes the total variation distance on $\R^{n-1}$ equipped with the product topology.
\end{prop}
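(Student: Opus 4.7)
The plan is to view Proposition~\ref{prop:dTV} as a transfer of a classical condensation result for conditioned i.i.d.~random variables. First I would apply Lemma~\ref{lem:tilting} with $b=\rho$ and $a=1/G(\rho)$ to realise the distribution of $(X^{n}_{1},\dots,X^{n}_{n})$ under $\P^{\boldsymbol{\poids},n}_{x_{n}}$ as that of i.i.d.~random variables $(\xi_{1},\dots,\xi_{n})$, with common law $\P(\xi_{1}=k)=\rho^{k}\poids(k)/G(\rho)$ and mean $m=\Psi(\rho)$, conditioned on $\xi_{1}+\dots+\xi_{n}=x_{n}$. After centering each coordinate by $m$, this precisely rewrites $(Y^{n}_{1},\dots,Y^{n}_{n})$ as $(Y_{1},\dots,Y_{n})$ conditioned on $Y_{1}+\dots+Y_{n}=\lambda_{n}$, where the $Y_{i}$ are the i.i.d.~centred variables appearing in the statement.

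Next, the regular variation of $\poids(k)\rho^{-k}$ with index $-\beta<-2$ translates into regular variation of the point probabilities $\P(Y_{1}=k)$ with the same index; in particular the positive part of $Y_{1}$ has a right tail that is subexponential (and even regularly varying of index $-(\beta-1)$). In this setting, Theorem~1 of Armendáriz \& Loulakis~\cite{AL11} asserts that if $\lambda_{n}$ is large enough for the local large deviation estimate
\[\Pr{Y_{1}+\dots+Y_{n}=\lambda_{n}} \equi n \Pr{Y_{1}=\lambda_{n}}\]
to hold (the so-called ``one-big-jump'' principle), then removing the maximal summand from a sample conditioned on having sum $\lambda_{n}$ yields $n-1$ coordinates whose joint law is asymptotically indistinguishable, in total variation, from $n-1$ unconditioned i.i.d.~copies of $Y_{1}$. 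Applying this theorem is exactly what gives the claim.

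It only remains to check that the one-big-jump asymptotic above holds in our two regimes, which is precisely the content of the quoted results of Denisov, Dieker \& Schneer: when $G^{(2)}(\rho)<\infty$, Theorem~8.1 of~\cite{DDS08} supplies the estimate under the hypothesis $\lambda_{n}\ge\sqrt{c\ln n}$ with $c>(\beta-3)/\sigma^{2}$, which is exactly where the threshold in the statement comes from; when $G^{(2)}(\rho)=\infty$, Theorem~9.1 of~\cite{DDS08} supplies it under the weaker hypothesis $\lambda_{n}/r_{n}\to\infty$ that already appears.

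The main obstacle in this proof is bookkeeping rather than conceptual: one has to translate between the tail conditions used in~\cite{AL11,DDS08} and the hypothesis on the generating function $G$ used here, in particular to check that regular variation of the point probabilities $\poids(k)\rho^{-k}$ (and not merely of the survival function) is what is required by Theorem~8.1 of~\cite{DDS08}, and to verify that the precise constant $(\beta-3)/\sigma^{2}$ in the finite-variance threshold is the one dictated by that theorem. Once these verifications are done, the proposition follows without further work.
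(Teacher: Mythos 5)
Your proposal is correct and follows exactly the route the paper takes: the paper's proof is just the one-line remark that the proposition is a direct application of Theorem~1 of Armend\'ariz \& Loulakis combined with Theorems~8.1 and~9.1 of Denisov, Dieker \& Schneer (according to whether $G^{(2)}(\rho)$ is finite or infinite). Your write-up fills in the same intermediate bookkeeping (tilting via Lemma~\ref{lem:tilting}, translating the regular-variation hypothesis, matching the threshold $\lambda_{n}\ge\sqrt{c\ln n}$ to the local large-deviation estimate) that the paper leaves to the reader.
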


Theorem~\ref{thm:stable_marches_condensation} is now a simple consequence of this result.

\begin{proof}[Proof of Theorem~\ref{thm:stable_marches_condensation}]
Let the $Y_{i}$'s be as above; note that $\max(Y_{1}, \dots,Y_{n-1})/r_{n}$ and $(Y_{1} + \dots + Y_{n-1})/r_{n}$ both converge in distribution. Since we impose $Y_{1}^{n}+ \dots + Y^{n}_{n}=\lambda_{n}$, with $\lambda_{n}/r_{n}\to\infty$, then Proposition~\ref{prop:dTV} implies that
\[\frac{Y^{n}_{V_{n}}}{\lambda_{n}}  \cvproba 1
\qquad\text{and}\qquad
\frac{\max\{Y^{n}_i : i \ne V_{n}\}}{\lambda_{n}}  \cvproba 0.\]
Also, by exchangeability of the $Y^{n}_{k}$'s, we have $V_n \to U$ in distribution.
This yields~\eqref{eq:cv_condensation_centre}, which is equivalent to the convergence of the processes in Theorem~\ref{thm:stable_marches_condensation}.

Next, as in the proof of Theorem~\ref{thm:stable_marches_drift}, the rescaled sum $r_{n}^{-2} \sum_{k=1}^{n-1} (X_{k})^{2}$ converges in distribution as $n \to \infty$. The limit is $\sum_{t \in [0,1]} (\Delta X^{\alpha}_{t})^{2}$ in the case $\alpha \in (1,2)$, and it is constant when $\alpha=2$, precisely simply $2$ in the case of infinite variance when $G^{(2)}(\rho) = \infty$, and to $2\E[X_{1}^{2}]/\Var(X_{1})$ otherwise.
Consequently,
\[\frac{1}{(\lambda_{n})^{2}} \sum_{k \ne V_{n}} (X^{n}_{k})^{2}  \cvproba 0,\]
and the second convergence of Theorem~\ref{thm:stable_marches_condensation} follows since $X^{n}_{V_{n}}/ \lambda_{n} \to 1$ in probability.
\end{proof}

\section{Biconditioned random trees}
\label{sec:arbres}

In this section, we focus on random trees with $n$ vertices and $K_{n}$ leaves sampled proportionally to a sequence of weights. In the case of the uniform distribution, Labarbe \& Marckert~\cite{LM07} proved the convergence of the so-called contour function. We shall here only consider the {\L}ukasiewicz path, which will be sufficient to our application to random maps in the next section. It also provides some  combinatorial information, such as the largest degree for example.

In Sec.~\ref{sec:arbres_Luka} we first recall the definition of the {\L}ukasiewicz path of a plane tree, then in Sec.~\ref{sec:arbres_aleatoires} we precisely define the distribution we consider. In Sec.~\ref{sec:vervaat} we relate the random {\L}ukasiewicz paths to the nondecreasing paths studied in the preceding section. Finally in Sec.~\ref{sec:Luka} we state and prove our main results on scaling limits of these {\L}ukasiewicz paths.

\subsection{Plane trees and coding path}
\label{sec:arbres_Luka}

Recall that a rooted plane tree is a connected graph with no cycle, with a distinguished root-corner, and such that the neighbours of every vertex are ordered. We shall interpret them as genealogical trees; the vertex at the root is the ancestor, the neighbour of a vertex closer to the root is its parent, whereas the other ones are its offspring (ordered from left to right); more generally, the vertices lying between the root and a given vertex are the ancestors of the latter. Finally an individual with no child is called a leaf, the other ones are called internal vertices.

\begin{figure}[!ht]\centering
\def\longueur{2}
\def\R{.5}
\def\r{1}
\begin{tikzpicture}[scale=.45]
\coordinate (1) at (0,0*\longueur);
	\coordinate (2) at (-4.25,1*\longueur);
	\coordinate (3) at (-1.5,1*\longueur);
	\coordinate (4) at (1.5,1*\longueur);
		\coordinate (5) at (.75,2*\longueur);
			\coordinate (6) at (.75,3*\longueur);
				\coordinate (7) at (-.75,4*\longueur);
				\coordinate (8) at (.25,4*\longueur);
				\coordinate (9) at (1.25,4*\longueur);
				\coordinate (10) at (2.25,4*\longueur);
		\coordinate (11) at (2.25,2*\longueur);
	\coordinate (12) at (4.25,1*\longueur);
		\coordinate (13) at (3.5,2*\longueur);
			\coordinate (14) at (2.5,3*\longueur);
			\coordinate (15) at (3.5,3*\longueur);
			\coordinate (16) at (4.5,3*\longueur);
		\coordinate (17) at (5,2*\longueur);

\draw	(1) -- (2)	(1) -- (3)	(1) -- (4)	(1) -- (12);
\draw
	(4) -- (5)	(4) -- (11)
	(5) -- (6)
	(6) -- (7)	(6) -- (8)	(6) -- (9)	(6) -- (10)
	(12) -- (13)	(12) -- (17)
	(13) -- (14)	(13) -- (15)	(13) -- (16)
;

%
\foreach \x in {1, 2, ..., 17} \draw[fill=black] (\x) circle (3pt) ;

\begin{scriptsize}
\draw
	(1) node[below=1mm] {0}
	(2) node[below=1mm] {1}
	(3) node[below=1mm] {2}
	(4) node[below=1mm] {3}
	(5) node[left=1mm] {4}
	(6) node[left=1mm] {5}
	(7) node[above=1mm] {6}
	(8) node[above=1mm] {7}
	(9) node[above=1mm] {8}
	(10) node[above=1mm] {9}
	(11) node[above=1mm] {10}
	(12) node[below=1mm] {11}
	(13) node[above right=.5mm] {12}
	(14) node[above=1mm] {13}
	(15) node[above=1mm] {14}
	(16) node[above=1mm] {15}
	(17) node[above right=.5mm] {16}
;

\begin{scope}[shift={(8,1.5)}]
\draw[very thin, ->]	(0,0) -- (18,0);
\draw[very thin, ->]	(0,-1) -- (0,5.5);
\foreach \x in {-1, 1, 2, 3, 4, 5}
	\draw[dotted]	(0,\x) -- (18,\x);
\foreach \x in {-1, 0, 1, ..., 5}
	\draw (.1,\x)--(-.1,\x)
	(0,\x) node[left] {\x}
;
\foreach \x in {1, 2, ..., 17}
	\draw (\x,.1)--(\x,-.1);
\foreach \x in {2, 4, ..., 16}
	\draw (\x,0) node[below] {\x};

\draw[fill=black]
	(0, 0) circle (2pt) -- ++ (1,0)
	++(0,3) circle (2pt) -- ++ (1,0)
	++(0,-1) circle (2pt) -- ++ (1,0)
	++(0,-1) circle (2pt) -- ++ (1,0)
	++(0,1) circle (2pt) -- ++ (1,0)
	++(0,0) circle (2pt) -- ++ (1,0)
	++(0,3) circle (2pt) -- ++ (1,0)
	++(0,-1) circle (2pt) -- ++ (1,0)
	++(0,-1) circle (2pt) -- ++ (1,0)
	++(0,-1) circle (2pt) -- ++ (1,0)
	++(0,-1) circle (2pt) -- ++ (1,0)
	++(0,-1) circle (2pt) -- ++ (1,0)
	++(0,1) circle (2pt) -- ++ (1,0)
	++(0,2) circle (2pt) -- ++ (1,0)
	++(0,-1) circle (2pt) -- ++ (1,0)
	++(0,-1) circle (2pt) -- ++ (1,0)
	++(0,-1) circle (2pt) -- ++ (1,0)
	++(0,-1) circle (2pt) -- ++ (1,0)
;
\end{scope}
\end{scriptsize}
\end{tikzpicture}
\caption{A plane tree, with the depth-first search order indicated next to the nodes and its {\L}ukasiewicz path.}
\label{fig:foret_etiquetee}
\end{figure}

Let us recall the coding of a plane tree by a discrete path, see Figure~\ref{fig:foret_etiquetee} for an example.
Fix a tree $T$ with $n$ vertices; thanks to the planar ordering, we may list these vertices as $u_0 < u_1 < \dots < u_{n-1}$ in depth-first search order, which corresponds to the lexicographical order when trees are viewed as words. Then for each such vertex $u$, let us denote by $k_{u} \ge 0$ its offspring number. We define then the \emph{{\L}ukasiewicz path} $W(T) = (W_{i}(T) ; 0 \le i \le n)$ of the tree recursively by $W_{0}(T) = 0$ and
\[W_{i+1}(T) = W_{i}(T) + k_{u_i}-1
\qquad 0 \le i \le n-1.\]
One easily checks that $W_{n}(T) = -1$ whereas for every $0 \le i \le n-1$, we have $W_{i}(T) \ge 0$ as well as $W_{i+1}(T)-W_{i}(T) \ge -1$. 
There is a $1$-to-$1$ correspondence between the set of such paths and plane trees with $n$ vertices, see e.g.~\cite[Chapter~6]{Pit06} or~\cite{LG05}. Furthermore the leaves of the tree correspond to the negative increments of its {\L}ukasiewicz path. For every $k \in \{0, \dots, n-1\}$, we shall denote by
\[\Lambda_{k}(T) = \sum_{i = 0}^{k} \ind{W_{i+1}(T) - W_{i}(T) = -1}\]
the number of leaves amongst the first $k+1$ vertices of $T$.

\subsection{Random trees}
\label{sec:arbres_aleatoires}

We consider random trees sampled according to a sequence of weights.
For a tree $T$ and a vertex $u \in T$, recall that $k_{u} \ge 0$ denotes its number of children. Fix a sequence of nonnegative real numbers $\boldsymbol{\theta} = (\theta(i))_{i \ge 0}$ which, in order to avoid trivialities, satisfies $\theta(0) > 0$ and $\theta(i) > 0$ for at least one $i \ge 2$, and define  a measure $w^{\boldsymbol{\theta}}$ on the set of all finite trees by setting
\[w^{\boldsymbol{\theta}}(T) = \prod_{u \in T} \theta(k_{u}),\]
for every finite tree $T$. 
For $n \ge K_{n} \ge 1$ let $\Tree_{n,K_{n}}$ denote the finite set of trees with $n$ vertices amongst which $K_{n}$ are leaves. We shall always implicitly assume that $n$ and $K_{n}$ are compatible with the support of $\boldsymbol{\theta}$, in the sense that $\Tree_{n,K_{n}}$ has nonzero $w^{\boldsymbol{\theta}}$-weight. We then define a probability $\P^{\boldsymbol{\theta}}_{n,K_{n}} = w^{\boldsymbol{\theta}}(\cdot)/w^{\boldsymbol{\theta}}(\Tree_{n,K_{n}})$ on this set.

Trees with solely $n$ vertices sampled proportionally to their $w^{\boldsymbol{\theta}}$-weight are known in the literature as \emph{simply generated}. When the sequence $\boldsymbol{\theta}$ is a probability measure, they correspond to Bienaym\'e--Galton--Watson trees with offspring distribution $\boldsymbol{\theta}$ conditioned on having $n$ vertices; similarly, $\P^{\boldsymbol{\theta}}_{n,K_{n}}$ is the law of such a random tree conditioned to have $n$ vertices and $K_{n}$ leaves. A classical fact for trees conditioned only by their number of vertices is that an exponential tilting of $\boldsymbol{\theta}$ as in Lemma~\ref{lem:tilting} leaves the distribution of the random trees unaffected, see e.g. the survey of Janson~\cite[Sec.~4]{Jan12}. Although we shall not need it, let us mention that in the context of trees conditioned both by their number of vertices and leaves, one can gain a degree of freedom by only making the exponential change to the weights $(\theta(i))_{i \ge 1}$ and setting for $0$ any arbitrary value.

We next consider the random \L ukasiewicz path of such a tree.
For every $n \ge 1$, let $\mathbf{B}_{n,K_{n}}$ denote the set of paths $w = (w_{k} ; 0 \le k \le n)$ such that
\[w_{0} = 0, \qquad w_{n} = -1, \qquad w_{k} - w_{k-1} \in \Z_{\ge-1} \enskip\text{for every}\enskip 1 \le k \le n,\]
\emph{and}
\[\#\{k \in \{1, \dots, n\} : w_{k} - w_{k-1} = -1\} = K_{n}.\]
Further, let $\mathbf{W}_{n,K_{n}} \subset \mathbf{B}_{n,K_{n}}$ denote the set of such paths which in addition satisfy
\[w_{k} \ge 0 \qquad\text{for every}\qquad 1 \le k \le n-1.\]
Let us define probability measures on the sets $\mathbf{B}_{n,K_{n}}$ and $\mathbf{W}_{n,K_{n}}$, denoted respectively by $\P^{\boldsymbol{\theta}}_{n,K_{n}}$ and $\P^{\boldsymbol{\theta},+}_{n,K_{n}}$, as in Sec.~\ref{sec:def_ponts}, by normalising the measures which assign a weight
\begin{equation}\label{eq:mesure_SG_Luka}
\prod_{k=1}^{n} \theta(w_{k}-w_{k-1}+1),
\end{equation}
to every path $w$ in $\mathbf{B}_{n,K_{n}}$ and $\mathbf{W}_{n,K_{n}}$ respectively.
Then the coding of a plane tree $T$ by its \L ukasiewicz path $W(T)$ bijectively maps $\Tree_{n,K_{n}}$ onto $\mathbf{W}_{n,K_{n}}$ and the law $\P^{\boldsymbol{\theta},+}_{n,K_{n}}$ on $\mathbf{W}_{n,K_{n}}$ is the image measure of $\P^{\boldsymbol{\theta}}_{n,K_{n}}$ on $\Tree_{n,K_{n}}$.

\subsection{The Vervaat transform}
\label{sec:vervaat}

Our aim is to study the asymptotic behaviour of the preceding random \L ukasiewicz paths. In order to relax the positivity constraint, and therefore to study paths sampled from $\P^{\boldsymbol{\theta}}_{n,K_{n}}$ instead of $\P^{\boldsymbol{\theta},+}_{n,K_{n}}$, we make use of the notion of \emph{Vervaat transform}, also known as \emph{cyclic shift}, of a bivariate sequence (with respect to the first variable).
First, if $\boldsymbol{x}=(x_{k})_{1 \le  k \le  m}=(a_{k},b_{k})_{1 \le  k \le  m}$ is a sequence of integer-valued couples and $i \in \Z/m\Z$, we define the cyclic shift $\boldsymbol{x}^{(i)}$ by $x^{(i)}_k=(a_{k+i \bmod m},b_{k+i \bmod m})$ for $1 \le  k \le  m$ (where representatives modulo $m$ are chosen in $ \{1,2, \ldots,m\}$).  We then define the (discrete) Vervaat transform $ \Vcd(\boldsymbol{x})$ as follows (we introduce the superscript $\mathsf{d}$ to underline the fact that this transformation acts on discrete sequences). Let $i_{*}(\boldsymbol{x})$ be defined by
\[i_{*}(\boldsymbol{x}) = \min \left\{ j  \in \{1,2, \ldots,m\} ; a_{1}+a_{2}+\cdots+a_{j}= \min_{1 \le  i \le  m} (a_{1}+a_{2}+\cdots+a_{i}) \right\}.\]
Then $\Vcd(\boldsymbol{x}) \coloneqq \boldsymbol{x}^{(i_{*}(\boldsymbol{x}))}$.

\begin{lem}
\label{lem:vervaat}
Fix $n$ and $K_{n}$ and given $W^{n} \in \mathbf{B}_{n,K_{n}}$, for all $k \in \{0, \dots, n\}$, let
\begin{equation}\label{eq:def_Lambda_W}
\Lambda^{n}_{k}=\sum_{i=1}^{k}\ind{W^{n}_{i}-W^{n}_{i-1}=-1}
\end{equation}
denote the number of negative increments up to time $k$.
Then the following equality holds in distribution:
\[\Vcd\big((W^{n}_{ k}, \Lambda^{n}_{k})_{0 \le  k \le  n}\big) \enskip\text{under}\enskip \P^{\boldsymbol{\theta}}_{n,K_{n}}\enskip\eqloi\enskip
(W^{n}_{ k}, \Lambda^{n}_{k})_{0 \le  k \le  n} \enskip\text{under}\enskip \P^{\boldsymbol{\theta},+}_{n,K_{n}}.\]
\end{lem}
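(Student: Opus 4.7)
The plan is to reduce the bivariate statement to a univariate Vervaat--type identity for $W^n$ alone, and then to deduce this identity by combining the cycle lemma with the invariance of the weight under cyclic shifts.

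First I would observe that $\Lambda^{n}$ is a deterministic function of $W^{n}$: the increment $\Lambda^{n}_{k}-\Lambda^{n}_{k-1}$ equals $\ind{W^{n}_{k}-W^{n}_{k-1}=-1}$. Writing the bivariate sequence of increments as $(a_{k},b_{k})=(W^{n}_{k}-W^{n}_{k-1},\,\ind{a_{k}=-1})$, the cyclic shift defining $\Vcd$ uses only the $a$'s (through $i_{*}$) and shifts both coordinates by the same index; since $b$ is a pointwise function of $a$, a cyclic shift of the bivariate pair equals the pair associated with the shifted $W^{n}$. Consequently, $\Vcd\big((W^{n}_{k},\Lambda^{n}_{k})_{0\le k\le n}\big)$ is obtained from $\Vcd(W^{n})$ by recomputing $\Lambda$. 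It therefore suffices to show that under $\P^{\boldsymbol{\theta}}_{n,K_{n}}$, the law of $\Vcd(W^{n})$ equals the law of $W^{n}$ under $\P^{\boldsymbol{\theta},+}_{n,K_{n}}$.

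Next I would exploit the cycle lemma: for any $W\in\mathbf{B}_{n,K_{n}}$, because its increments are integers $\ge-1$ and sum to $-1$, there is a unique index $i\in\{0,\dots,n-1\}$ such that the cyclic shift $W^{(i)}$ belongs to $\mathbf{W}_{n,K_{n}}$ (equivalently, its partial sums are $\ge0$ on $\{1,\dots,n-1\}$), and by construction this index is the one defining $\Vcd(W)=W^{(i_{*}(W))}$. The second key ingredient is that the weight~\eqref{eq:mesure_SG_Luka} is a product over increments, hence invariant under cyclic shifts: $w^{\boldsymbol{\theta}}(W^{(i)})=w^{\boldsymbol{\theta}}(W)$ for every $i$.

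Combining these two ingredients, for any bounded $f:\mathbf{W}_{n,K_{n}}\to\R$ one writes
\[
\sum_{W\in\mathbf{B}_{n,K_{n}}} w^{\boldsymbol{\theta}}(W)\,f(\Vcd(W))
=\sum_{i=0}^{n-1}\sum_{W\in\mathbf{B}_{n,K_{n}}} w^{\boldsymbol{\theta}}(W)\,f(W^{(i)})\,\ind{W^{(i)}\in\mathbf{W}_{n,K_{n}}},
\]
since by the cycle lemma exactly one summand in $i$ is non-zero for each $W$. Using that the map $W\mapsto W^{(i)}$ is a weight-preserving bijection of $\mathbf{B}_{n,K_{n}}$, each of the $n$ inner sums equals $\sum_{v\in\mathbf{W}_{n,K_{n}}} w^{\boldsymbol{\theta}}(v)\,f(v)$. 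Specialising to $f\equiv1$ gives $w^{\boldsymbol{\theta}}(\mathbf{B}_{n,K_{n}})=n\,w^{\boldsymbol{\theta}}(\mathbf{W}_{n,K_{n}})$, and dividing the general identity by this normalisation yields $\E^{\boldsymbol{\theta}}_{n,K_{n}}[f(\Vcd(W^{n}))]=\E^{\boldsymbol{\theta},+}_{n,K_{n}}[f(W^{n})]$, which is the desired equality in distribution.

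There is no serious obstacle: the only point requiring care is checking that, in the rewriting above, the inner term $\ind{W^{(i)}\in\mathbf{W}_{n,K_{n}}}$ is indeed non-zero for exactly one $i$ (which is precisely the cycle lemma, applicable because the total drift $-1$ is negative and the minimum is attained strictly). Once this is in place, the argument is purely a one-line re-indexing using the shift-invariance of $w^{\boldsymbol{\theta}}$, and the bivariate statement follows from the functional dependence of $\Lambda^{n}$ on $W^{n}$ noted at the outset.
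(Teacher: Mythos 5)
Your proof is correct and follows the same route as the paper, namely the cycle lemma combined with the shift-invariance of the product weight (the ``exchangeability argument'' the paper invokes and leaves to the reader). The only thing you add is the explicit re-indexing computation filling in the details the paper omits; the preliminary observation that $\Lambda^{n}$ is a deterministic function of $W^{n}$, so that the bivariate Vervaat transform reduces to the univariate one, is implicit in the paper and you are right to spell it out.
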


\begin{proof}This follows from a simple extension of the so-called cyclic lemma (see e.g.~\cite[Lemma~6.1]{Pit06}). Indeed, if $\boldsymbol{x}=(a_{i},b_{i})_{1 \le  i \le  m}$ are integers such that $a_{1}+a_{2}+\cdots+a_{m}=-1$, then there is a unique $j \in \Z/m\Z$ such that the cyclic shift $\boldsymbol{x}^{(j)}=(a^{(j)}_{i},b^{(j)}_{i})_{1 \le  i \le  m}$ fulfills $a^{(j)}_{1}+ \cdots +a^{(j)}_{i} \ge  0$ for every $1 \le  i<m$. It is then standard to obtain the desired results by an exchangeability argument; we leave details to the reader. 
\end{proof}

In the continuum setting, following Miermont~\cite[Definition~1]{Mie01}, we define the Vervaat transform $\mathcal V\, f$ of a function $f \in \D([0,1])$ with $f(0)=f(1-)=f(1)$ as follows:
let $t_{\min}$ be the location of the left-most minimum of $f$, that is, the smallest $t$ such
that $f(t-) \wedge f (t) = \inf\, f$, then, for $t \in [0,1)$ set 
\[\mathcal V\, f (t) =f(t + t_{\min} \bmod 1]) - \inf_{[0,1]} f,\]
and $\mathcal V\, f (1) = \lim_{t \uparrow 1} \mathcal V\, f (t)$. 
Note that  if $f \in \D([0,1])$ with $f(0)=f(1-)=f(1)$ reaches its infimum at a unique time with no jump at that time, then $ \mathcal{V}$ is continuous at $f$. This is the case of the spectrally positive stable processees with drift from Sec.~\ref{sec:def_ponts}.
\smallskip

Finally, the following simple observation is really the key idea that will allow us to transfer results from Sec.~\ref{sec:marches} to the present `biconditioned' setting. Recall from Sec.~\ref{sec:def_ponts} the law $\P^{\boldsymbol{\poids},n}_{x_{n}}$ on nondecreasing paths which end at $x_{n}$ at time $n$,  sampled from a weight sequence $\boldsymbol{\poids}$.

\begin{lem}\label{lem:separation_sauts}
Fix a weight sequence $\boldsymbol{\theta}$ and define two sequences $\boldsymbol{p}$ and $\boldsymbol{q}$ by $\poids(k) = \theta(k+1)$ for every $k\ge 0$, $q(k) = 1$ if $k \in \{0,1\}$, and $q(k) = 0$ for $k \ge 2$.
Fix $1 \leq K_{n} \leq n$. Independently, sample $(S^{n-K_{n}}_{k} ; 0 \le k \le n-K_{n})$ with distribution $\P^{\boldsymbol{p},n-K_{n}}_{K_{n}-1}$ and $(L^{n}_{k} ; 0 \le k \le n)$ with distribution $\P^{\boldsymbol{q},n}_{K_{n}}$. For every $k \in \{0, \dots, n\}$, define
\[W^{n}_{k} = S^{n-K_{n}}_{k - L^{n}_{k}} - L^{n}_{k}.\]
Then the path $W^{n}$ has  law $\P^{\boldsymbol{\theta}}_{n,K_{n}}$.
\end{lem}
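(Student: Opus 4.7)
\medskip
\noindent\textbf{Proof plan.} The plan is to establish the result as a combination of a deterministic bijection between configurations and a direct computation of weights. First I will set up a map $\Phi \colon (s,\ell) \mapsto w$ from pairs of paths, where $s = (s_{k})_{0\le k\le n-K_{n}}$ is any nondecreasing path from $0$ to $K_{n}-1$ with nonnegative integer increments and $\ell = (\ell_{k})_{0\le k\le n}$ is any nondecreasing path from $0$ to $K_{n}$ with increments in $\{0,1\}$, to paths $w \in \mathbf{B}_{n,K_{n}}$, defined by $w_{k} = s_{k-\ell_{k}}-\ell_{k}$. I will verify that $\Phi(s,\ell)$ always lies in $\mathbf{B}_{n,K_{n}}$: the increment at step $k$ is $-1$ if $\ell$ jumps (i.e.\ $\ell_{k}-\ell_{k-1}=1$), and equals the corresponding increment of $s$ otherwise; the endpoint is $w_{n}= s_{n-K_{n}}-K_{n}= -1$.

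Next I will argue that $\Phi$ is bijective by exhibiting its inverse: from $w \in \mathbf{B}_{n,K_{n}}$, recover $\ell$ as the counting function of the negative jumps, namely $\ell_{k} = \#\{1\le i\le k : w_{i}-w_{i-1}=-1\}$, and recover $s$ as the nondecreasing path whose successive increments are the nonnegative increments of $w$, read in order. The fact that $w_{n}=-1$ and that $w$ has $K_{n}$ negative steps matches exactly the prescribed endpoints of $s$ and $\ell$.

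The main computation will then be to check that the pushforward of the product law on the $(s,\ell)$-side under $\Phi$ equals $\P^{\boldsymbol{\theta}}_{n,K_{n}}$. Writing the weights explicitly, if $w=\Phi(s,\ell)$, then the factorisation
\begin{equation*}
\prod_{k=1}^{n}\theta(w_{k}-w_{k-1}+1) = \theta(0)^{K_{n}}\prod_{j=1}^{n-K_{n}}\theta(s_{j}-s_{j-1}+1) = \theta(0)^{K_{n}}\prod_{j=1}^{n-K_{n}}\poids(s_{j}-s_{j-1})
\end{equation*}
holds, since the $K_{n}$ negative increments of $w$ each contribute a factor $\theta(0)$ and the remaining increments are precisely those of $s$ but shifted by $1$ (so use $\poids(k)=\theta(k+1)$). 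The $\boldsymbol{q}$-weight of $\ell$ is $1$, so $\P^{\boldsymbol{q},n}_{K_{n}}$ is uniform on the $\binom{n}{K_{n}}$ admissible paths.

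Finally, to conclude, I will check that the normalising constants agree. Summing the identity above over all $w\in\mathbf{B}_{n,K_{n}}$ and decomposing the sum according to the positions of the $K_{n}$ negative increments yields the combinatorial identity
\begin{equation*}
w^{\boldsymbol{\theta}}(\mathbf{B}_{n,K_{n}}) = \binom{n}{K_{n}}\,\theta(0)^{K_{n}}\,w^{\boldsymbol{\poids}}(\mathcal{S}^{n-K_{n}}_{K_{n}-1}),
\end{equation*}
so that the pushforward $\Phi_{*}(\P^{\boldsymbol{\poids},n-K_{n}}_{K_{n}-1}\otimes \P^{\boldsymbol{q},n}_{K_{n}})$ coincides with $\P^{\boldsymbol{\theta}}_{n,K_{n}}$. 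No step is really an obstacle here: the only care needed is to keep the index shift $\poids(k)=\theta(k+1)$ and the factor $\theta(0)^{K_{n}}$ in mind, and to verify that $\Phi$ is well defined and invertible; everything else is a direct weight count.
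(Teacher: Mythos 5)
Your proposal is correct, and it is a faithful formalization of exactly the intuition the paper gives (the paper only sketches the idea and leaves the proof to the reader). The bijection $\Phi$, the observation that each of the $K_{n}$ negative steps contributes a factor $\theta(0)$ while the remaining increments give the $\boldsymbol{p}$-weight of $s$, the fact that $\P^{\boldsymbol{q},n}_{K_{n}}$ is uniform on the $\binom{n}{K_{n}}$ jump-location choices, and the matching of normalisations are precisely the "positions of the $K_{n}$ negative steps are uniformly distributed, and the remaining path is a simply generated nondecreasing bridge" statement from the paper. No gap; this is the intended argument made explicit.
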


The proof is straightforward and left to the reader. Intuitively speaking, this just amounts to saying that in a simply generated bridge of $\mathbf{B}_{n,K_{n}}$, the position of the $K_{n}$ negative steps are uniformly distributed amongst the $n$ different jumps and the path obtained by removing them is then a simply generated nondecreasing path which ends at $K_{n}-1$ at time $n-K_{n}$.

\subsection{Scaling limits of random \L ukasiewicz paths}
\label{sec:Luka}

We are now ready to state and prove several invariance principles for the \L ukasiewicz paths of $\boldsymbol{\theta}$-simply generated random trees with $n$ vertices and $K_{n}$ leaves. Recall from the above discussion that these paths $W^{n}$ have the law $\P^{\boldsymbol{\theta},+}_{n,K_{n}}$. In other words, they are paths starting from $0$, whose increments all belong to $\{-1, 0, 1, 2, \dots\}$, which reach at time $n$ the value $-1$ for the first time by making their $K_{n}$'th negative increments, and they are sampled at random proportionally to their weight~\eqref{eq:mesure_SG_Luka}. For every $k \in \{1, \dots, n\}$, we shall denote by $X^{n}_{k} = W^{n}_{k} - W^{n}_{k-1}$ the $k$'th increment of $W^{n}$.

We shall canonically view our discrete paths as c\`adl\`ag functions on $[0,1]$, equipped with the Skorokhod $J_{1}$ topology; recall that when the limit is a continuous path, convergence in this topology is equivalent to the uniform convergence. In the following statement, we denote by $B^{\exc}$ a standard Brownian excursion, constructed e.g. from the Brownian bridge presented in Sec.~\ref{sec:def_ponts} via the Vervaat transform as above.

We need to introduce some notation. Set
\begin{equation}\label{eq:serie_gen_poids_arbres}
F(z)=\sum_{k=0}^{\infty}  \theta(k)  z^{k}
\qquad\text{and}\qquad
A(z)=1- \frac{F(z)-F(0)}{zF'(z)}.
\end{equation}
Denote by $\rho$ the radius of convergence of   $F$ and set $k_{0}= \min \{k \ge  1:  q_{k} \neq 0\}$; then it is a simple matter to check that $A$ is increasing on $(0, \rho)$ and we extend it by continuity with $A(0) = 1-1/k_{0}$ and $A(\rho) < 1$ if and only if $F'(\rho) < \infty$.

The next result is analogous to (and indeed based on) Theorem~\ref{thm:CVmarches}.

\begin{thm}
\label{thm:CVluka}
Let $(W^{n}_{k} ; 0 \le k \le n)$ have the law $\P^{\boldsymbol{\theta},+}_{n,K_{n}}$ for every $n \ge 1$. 
Then the convergences
\[\left(\frac{1}{\sqrt{\scale_{n}}} \cdot W^{n}_{\floor{nt}} ; 0 \le  t \le  1\right) \cvloi B^{\exc}
\qquad\text{and}\qquad
\frac{1}{\scale_{n}} \sum_{k=1}^{n} (X^{n}_{k})^{2} \cvproba 1,\]
hold in each of the following cases:
\begin{thmlist}
\item\label{thm:CVluka_bulk} There exists $\tau \in  (1-1/k_{0},A(\rho))$ such that $K_{n}/n \to \tau$ and $\scale_{n} = b F^{(2)}(b) n/F'(b)$ with $b=A^{-1}(\tau)$.

\item\label{thm:CVluka_small} $K_{n}/n \to 0$ with $K_{n} \to \infty$, $\theta(1), \theta(2)>0$, and $\scale_{n} = 2 K_{n}$.

\item\label{thm:CVluka_large} $K_{n}/n \to 1$ and $n-K_{n} \to \infty$, $F$ is $\Delta$-analytic and there exist $\alpha, c>0$ such that $F(\rho-z) \sim c/z^{\alpha}$ as $z \to 0$ with $\mathrm{Re}(z)>0$,  and finally $\scale_{n} = (1+\alpha)n^{2}/(\alpha(n-K_{n}))$.
\end{thmlist}
\end{thm}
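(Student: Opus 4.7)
The plan is to follow the strategy outlined in the introduction: first reduce to the bridge version via the Vervaat transform, then decompose the bridge via Lemma~\ref{lem:separation_sauts} into a nondecreasing bridge plus the uniform placement of the $-1$ steps (the leaves), apply Theorem~\ref{thm:CVmarches} to the nondecreasing part together with a Donsker-type result for the leaves, and combine. Specifically, let $\tilde W^n$ have the bridge law $\P^{\boldsymbol{\theta}}_{n,K_n}$; by Lemma~\ref{lem:vervaat} the Vervaat transform of $\tilde W^n$ has the law of $W^n$, and since the continuum Vervaat map is continuous at paths achieving their minimum uniquely and sends the standard Brownian bridge to the standard Brownian excursion, it suffices to prove
\[\left(\scale_n^{-1/2} \tilde W^n_{\lfloor nt\rfloor} ; 0 \le t \le 1\right) \cvloi B^{\br}.\]

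By Lemma~\ref{lem:separation_sauts}, one has $\tilde W^n_k = S^{n-K_n}_{k-L^n_k} - L^n_k$ with \emph{independent} components $S^{n-K_n}\sim \P^{\boldsymbol{p},n-K_n}_{K_n-1}$ (nondecreasing bridge with weights $p(k)=\theta(k+1)$) and $L^n \sim \P^{\boldsymbol{q},n}_{K_n}$ (the counting process of a uniformly chosen $K_n$-subset of $\{1,\dots,n\}$). The generating function $G_p(z)=(F(z)-F(0))/z$ satisfies $\Psi_p=A/(1-A)$ and $\Psi_p(0)=k_0-1=i_{\boldsymbol p}$, so the three regimes on $\boldsymbol\theta$ translate verbatim into the three regimes of Theorem~\ref{thm:CVmarches} for $\boldsymbol p$ with target $K_n-1$ in $n-K_n$ steps---in particular, the $\Delta$-analyticity and singular behaviour of $F$ at $\rho$ transfer to $G_p$ with the same exponent $\alpha$. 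Theorem~\ref{thm:CVmarches} then provides a scaled Brownian bridge limit $B^{\br,S}$ for $S^{n-K_n}$ with variance $\scale'_n$, while the classical Donsker theorem for a uniform hypergeometric empirical process yields an independent Brownian bridge limit $B^{\br,L}$ for $L^n$ with variance $\scale''_n=K_n(n-K_n)/n$.

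The final step is a time-change combination. Writing $L^n_{\lfloor nt\rfloor}\approx K_n t+\sqrt{\scale''_n}\,B^{\br,L}_t$ and $j=\lfloor nt\rfloor-L^n_{\lfloor nt\rfloor}\approx(n-K_n)t-\sqrt{\scale''_n}\,B^{\br,L}_t$, one checks case by case that $\sqrt{\scale''_n}/(n-K_n)\to 0$, so that the random time perturbation can be absorbed using almost-sure uniform continuity of the limit $B^{\br,S}$, and deduces after cancellation of the deterministic drifts
\[\tilde W^n_{\lfloor nt\rfloor}+t\approx\sqrt{\scale'_n}\,B^{\br,S}_t-\frac{n}{n-K_n}\sqrt{\scale''_n}\,B^{\br,L}_t.\]
Independence of the two bridges renders the right-hand side a scaled Brownian bridge of variance $\scale_n:=\scale'_n+(n/(n-K_n))^2\scale''_n$, and an elementary algebraic identification matches this with the scaling in the statement in each of the three regimes (for instance in case (i), differentiating $F(z)-F(0)=zG_p(z)$ yields the identity $bF^{(2)}(b)/F'(b)=\tau/(1-\tau)+(1-\tau)\sigma_{b,p}^2$; case (ii) reduces to $1+1=2$ since both $\scale'_n$ and $(n/(n-K_n))^2\scale''_n$ are asymptotic to $K_n$; case (iii) yields $n^2(1+\alpha)/(\alpha(n-K_n))$). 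The convergence $\scale_n^{-1}\sum_k(X^n_k)^2\to 1$ then follows from the decomposition $\sum_k(X^n_k)^2=K_n+\sum_j(Y^n_j)^2$, where $Y^n_j$ are the increments of $S^{n-K_n}$, together with the law-of-large-numbers part of Theorem~\ref{thm:CVmarches} and the same arithmetic identities. The main technical obstacle lies in making the time-change approximation rigorous: this requires joint weak convergence of $(S^{n-K_n},L^n)$ (immediate from independence and a Skorokhod coupling) together with the Skorokhod continuity of composing the rescaled bridge $S^{n-K_n}$ with a vanishing random time perturbation, which ultimately reduces to the almost-sure uniform continuity of $B^{\br,S}$ on $[0,1]$.
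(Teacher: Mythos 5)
Your proposal is correct and follows essentially the same route as the paper: Vervaat transform to reduce to the bridge law $\P^{\boldsymbol{\theta}}_{n,K_n}$, the decomposition of Lemma~\ref{lem:separation_sauts} into a nondecreasing bridge plus the uniformly placed leaf steps, the urn/hypergeometric Donsker limit~\eqref{eq:cv_Lambda_urne_Brownien} for $L^n$, Theorem~\ref{thm:CVmarches} for $S^{n-K_n}$, a random-time-change absorption via uniform continuity, and the same algebraic identities (in case (i) precisely $(1-\tau)\sigma^2_b + \tau/(1-\tau) = bF^{(2)}(b)/F'(b)$). The only cosmetic difference is that you centre $\tilde W^n_{\lfloor nt\rfloor}$ with an added $+t$ while the paper uses the exact algebraic splitting~\eqref{eq:decomposition_W}; both are equivalent since the discrepancy is $O(1)$.
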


Our last main result on these random paths is analogous to (and again, based on) Theorem~\ref{thm:stable_marches} and studies specifically the case $K_{n}/n \to A(\rho)$ when the latter is finite. Here, we assume that the probability distribution with generating function $F(\rho \,\cdot)/F(\rho)$ belongs to the domain of attraction of a stable law with index $\alpha \in (1, 2]$, i.e.~that there exist $\alpha \in (1,2]$, $\rho>0$, $m>0$, and a slowly varying function $L$ such that for every $0 \le  s <  \rho$,
\[F(s)=F(\rho) \left(1-m+\frac{m s}{\rho}+\left(1-\frac{s}{\rho}\right)^{\alpha} L\left(\frac{1}{1-s/\rho}\right)\right).\]
In the case $F^{(2)}(\rho) < \infty$, i.e.~when $\alpha=2$ and $L$ admits a finite limit at infinity, say $\ell$, the law with generating function $F(\rho \,\cdot)/F(\rho)$ has a finite variance, equal to
\begin{equation}\label{eq:variance_F}
\sigma^{2} = 2\ell + m - m^{2}
= \frac{\rho^{2} F^{(2)}(\rho)}{F(\rho)} + \frac{\rho F'(\rho)}{F(\rho)} - \left(\frac{\rho F'(\rho)}{F(\rho)}\right)^{2}
.\end{equation}
In this case we set $r_{n} = \sqrt{n \sigma^{2}/2}$ for every $n\ge 1$, otherwise, we let $(r_{n})_{n \ge 1}$ be a sequence such that $r_{n}^{\alpha} \sim n L(r_{n})$ as $n \to \infty$.
We have already mentioned and used the fact that if $(S_{n})_{n\ge 1}$ is a random walk with i.i.d. increments with generating function $F(\rho \,\cdot)/F(\rho)$, then the convergence
\[\left(\frac{1}{r_{n}} \cdot \left(S_{\floor{nt}} - mnt\right) ; t \ge 0\right)  \cvloi X^{\alpha},\]
holds in distribution in $\D([0,\infty), \R)$, where $X^{\alpha}$ is the $\alpha$-stable L\'evy process whose law is characterised by $\E[\exp(-q X^{\alpha}_{t})] = \exp(t q^{\alpha})$ for every $q,t > 0$. 

Recall finally from Sec.~\ref{sec:def_ponts} that for $\lambda\in\R$, we denote by $X^{\alpha,\lambda,\br}$ the process $(X^{\alpha}_{t} - \lambda t)_{t}$ conditioned to end at $0$ at time $1$. Let us also denote by $\Delta X^{\alpha,\lambda,\br}_{t} = X^{\alpha,\lambda,\br}_{t}-X^{\alpha,\lambda,\br}_{t-}$ the value of the `jump' of $X^{\alpha,\lambda,\br}$ at time $t \in [0,1]$, which is null except for a countable set of times. We let further $X^{\alpha,\lambda,\exc}$ denote the associated excursion, e.g. obtained after applying the Vervaat transform to $X^{\alpha,\lambda,\br}$.

\begin{thm}\label{thm:CVluka_distorted}
Under the preceding assumptions, if $K_{n}$ takes the form
\[K_n = A(\rho) n + \lambda_{n},\]
where 
$\lambda_{n}/r_{n}$ converges to some $\lambda \in [-\infty,\infty]$, then three cases occur.
\begin{thmlist}
\item\label{thm:CVluka_distorted_drift} If $\lambda \in \R$, then with $\widehat{\lambda} = \lambda m / (1-A(\rho))$,
{\[\left( \frac{1}{r_{n}} \cdot W^{n}_{\floor{nt}} ; 0 \le  t \le  1\right)  \cvloi \sqrt{1 + \frac{m^{2}-m}{\sigma^{2}} \ind{F^{(2)}(\rho) < \infty}} \cdot \left(X^{\alpha,\widehat{\lambda},\exc}_{t/m} ; 0 \le  t \le  1\right).\]}
In addition, 
\[
\frac{1}{r_{n}^{2}} \sum_{k=1}^{n} (X^{n}_{k})^{2} \cvloi
\begin{dcases}
\frac{2 F(\rho)}{\rho F'(\rho)} \left(1 + \frac{m^{2}-m}{\sigma^{2}} \ind{F^{(2)}(\rho) < \infty}\right) &  \text{when } \alpha=2, \\
\sum_{t \in [0,1]} (\Delta X^{\alpha,\widehat{\lambda},\exc}_{t/m})^{2} &  \text{when } \alpha \in (1,2).
\end{dcases}
\]
\item\label{thm:CVluka_distorted_gaussien} If $\lambda = -\infty$ and if $ \lambda_{n}/n \to 0$, then with $\varepsilon_{n}$ such that $A(\rho(1-\varepsilon_{n})) = K_{n}/n$, we have
\[\left(\sqrt{\frac{\varepsilon_{n}}{|\lambda_{n}|}} \cdot W^{n}_{\floor{nt}} ; 0 \le  t \le  1\right)  \cvloi \sqrt{C} \cdot B^{\exc}
\qquad\text{and}\qquad
\frac{\varepsilon_{n}}{|\lambda_{n}|} \sum_{k=1}^{n} (X^{n}_{k})^{2} \cvproba C,\]
where
\[C = \frac{\alpha-1}{1-A(\rho)} \ind{F^{(2)}(\rho) = \infty} + \frac{2\ell}{2\ell(1-A(\rho)) - m A(\rho)} \ind{F^{(2)}(\rho) < \infty}.\]

\item\label{thm:CVluka_distorted_condensation} If $\lambda = \infty$, we further assume that $\theta(k) \rho^{-k}$ is regularly varying at infinity with some index $-\beta < -2$. 
If $F^{(2)}(\rho)<\infty$ (then $\beta\ge3$), we assume furthermore that there exists $c>(\beta-3)/\sigma^{2}$ such that $\lambda_{n} \geq \sqrt{c \ln(n)}$. 
Then
\[ \left( \frac{1}{\lambda_{n}}  W^{n}_{\max\{0, \floor{nt}\}}  ; -1 \le  t \le  1\right)  \cvloi ((1-t) \ind{t>0}; -1 \leq t \leq 1).\]
In addition,
\[ \frac{\max \{X^{n}_{1}, \ldots, X^{n}_{n}\}^{2} }{\sum_{k=1}^{n} (X^{n}_{k})^{2} }  \cvproba1.\]
\end{thmlist}
\end{thm}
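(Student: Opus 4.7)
The plan is to reduce Theorem~\ref{thm:CVluka_distorted} to the three corresponding regimes of Theorem~\ref{thm:stable_marches} for nondecreasing random bridges, by combining the decomposition of Lemma~\ref{lem:separation_sauts} with the Vervaat transform of Lemma~\ref{lem:vervaat}. I shall work first with the bridge law $\P^{\boldsymbol{\theta}}_{n,K_n}$ (dropping the positivity constraint) and transfer to the excursion law $\P^{\boldsymbol{\theta},+}_{n,K_n}$ only at the end.

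By Lemma~\ref{lem:separation_sauts}, under $\P^{\boldsymbol{\theta}}_{n,K_n}$ we have the pathwise identity $W^{n}_k = S^{n-K_n}_{k-L^n_k} - L^n_k$, where $S^{n-K_n}$ has law $\P^{\boldsymbol{p},n-K_n}_{K_n-1}$ with $p(k)=\theta(k+1)$, and $L^n$ is an independent $\{0,1\}$-valued bridge from $0$ to $K_n$ of length $n$.  Standard fluctuation estimates for exchangeable sums yield $\sup_{k\le n}|L^n_k-K_n k/n|=\mathcal{O}(\sqrt n)$ with high probability, which is negligible compared to $r_n$, $\sqrt{|\lambda_n|/\varepsilon_n}$, or $\lambda_n$ in the three respective regimes (in the non-drift regimes this is guaranteed by $|\lambda_n|/r_n\to\infty$).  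A short computation gives $K_n-1=\Psi_{\boldsymbol{p}}(\rho)(n-K_n)+\lambda'_n$ with $\Psi_{\boldsymbol{p}}(\rho)=A(\rho)/(1-A(\rho))$ and $\lambda'_n\sim\lambda_n/(1-A(\rho))$.  Since the weight $\boldsymbol{p}$ inherits the domain-of-attraction property from $\boldsymbol{\theta}$ (with the slowly varying function scaled by the constant factor $F(\rho)/(F(\rho)-F(0))$), the rescaled ratio $\lambda'_n/r^{(\boldsymbol{p})}_{n-K_n}$ tends to the same element of $[-\infty,\infty]$ as $\lambda_n/r_n$, placing $S^{n-K_n}$ in the appropriate regime of Theorem~\ref{thm:stable_marches_drift}, \ref{thm:stable_marches_gaussien}, or \ref{thm:stable_marches_condensation}.

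Applying the matching statement of Theorem~\ref{thm:stable_marches} to $S^{n-K_n}$ and substituting into $W^{n}_{\floor{nt}}=S^{n-K_n}_{\floor{nt}-L^n_{\floor{nt}}}-L^n_{\floor{nt}}$, together with the expansions $\floor{nt}-L^n_{\floor{nt}}=(n-K_n)t+\mathcal{O}(\sqrt n)$ and $L^n_{\floor{nt}}=K_n t+\mathcal{O}(\sqrt n)$, yields the convergence of $(W^n_{\floor{nt}};0\le t\le 1)$ under $\P^{\boldsymbol{\theta}}_{n,K_n}$, after absorbing the bounded linear error $(K_n-1)t-K_n t=-t+\mathcal{O}(1)$, which is negligible at each of the three scales.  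Applying Lemma~\ref{lem:vervaat} then transfers this convergence to $\P^{\boldsymbol{\theta},+}_{n,K_n}$: in cases (i) and (ii) the Vervaat transform is continuous at the limit, since the L\'evy (respectively Brownian) bridge attains its infimum at a unique time without a jump there, while in case (iii) a direct inspection shows that the Vervaat transform of the piecewise-linear bridge $\ind{U\le t}-t$ is precisely the decreasing linear excursion $(1-t)\ind{t>0}$.  The sum-of-squares statements then follow from the identity $\sum_{k=1}^n(X^n_k)^2=K_n+\sum_{k=1}^{n-K_n}(\Delta S^{n-K_n}_k)^2$ together with the corresponding asymptotic of Theorem~\ref{thm:stable_marches}, and the max-jump claim in (iii) follows similarly from the condensation asymptotics of $S^{n-K_n}$.

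The main technical difficulty lies in case (i), in pinning down the multiplicative constant $\sqrt{1+(m^2-m)/\sigma^2\cdot\ind{F^{(2)}(\rho)<\infty}}$ and the time change $t\mapsto t/m$ in the limit.  The constant originates from the ratio $r^{(\boldsymbol{p})}_{n-K_n}/r_n$, which requires expanding $G_{\boldsymbol{p}}(z)=(F(z)-\theta(0))/z$ near $z=\rho$ via the stable expansion of $F$ and, in the finite-variance case, using the identity $\rho^2 F^{(2)}(\rho)=F(\rho)(\sigma^2+m^2-m)$ to re-express $\sigma_{\boldsymbol{p}}^2$ in terms of $\sigma^2$ and $m$; the time change arises because the continuum $\boldsymbol{p}$-bridge is parametrized naturally on $[0,1]$ with time unit $n-K_n$, whereas the $W^n$-excursion is parametrized on $[0,1]$ with time unit $n$, the two rescaled times being related via $1-A(\rho)=(1-F(0)/F(\rho))/m$.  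These identifications, while routine, are somewhat delicate; the conceptual content of the proof is entirely contained in the decomposition step and the reduction to Theorem~\ref{thm:stable_marches}.
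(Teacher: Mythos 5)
The overall strategy is correct and matches the paper exactly: decompose via Lemma~\ref{lem:separation_sauts}, apply the matching case of Theorem~\ref{thm:stable_marches} to the nondecreasing part, and undo the positivity constraint with the Vervaat transform of Lemma~\ref{lem:vervaat}. However, there is a concrete error in the treatment of the urn process~$L^{n}$ that makes the argument, as written, give the wrong multiplicative constant in regimes (i) and (ii) when $F^{(2)}(\rho)<\infty$.

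You assert that the centred fluctuations $\sup_{k\le n}|L^n_k-K_n k/n|=\mathcal{O}(\sqrt n)$ are ``negligible compared to $r_n$, $\sqrt{|\lambda_n|/\varepsilon_n}$, or $\lambda_n$ in the three respective regimes''. This is true in regime (iii), and in regimes (i) and (ii) when $F^{(2)}(\rho)=\infty$, because then $r_n\gg\sqrt n$ (resp. $\sqrt{|\lambda_n|/\varepsilon_n}\gg\sqrt n$). But when $F^{(2)}(\rho)<\infty$ (so $\alpha=2$ and $L$ has a finite limit $\ell$), one has $r_n=\sqrt{n\sigma^2/2}$, and in regime (ii) the relation~\eqref{eq:varfini} forces $|\lambda_n|/\varepsilon_n\sim\sigma^2 n$, so both normalisations are exactly of order $\sqrt n$ and the urn fluctuation is \emph{not} negligible. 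Using the decomposition~\eqref{eq:decomposition_W}, the second term $-\tfrac{n}{n-K_n}(L^n_{nt}-K_nt)$ divided by $r_n$ converges, via the urn CLT~\eqref{eq:cv_Lambda_urne_Brownien}, to $-\sqrt{2\Psi(\rho)/\sigma^2}\,B^{\br}$ (independent of the limit of the first term), and this contribution is essential. In particular, your claim that the constant $\sqrt{1+(m^2-m)/\sigma^2}$ ``originates from the ratio $r^{(\boldsymbol p)}_{n-K_n}/r_n$'' alone is incorrect: that ratio gives only $\sqrt{\tfrac{1}{\sigma^2}(\tfrac{2\ell}{m}-\Psi(\rho))}$, and adding the urn's $\sqrt{2\Psi(\rho)/\sigma^2}\,B^{\br}$ term is what produces the stated constant $\sqrt{2\ell/\sigma^2}=\sqrt{1+(m^2-m)/\sigma^2}$. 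The same issue affects the constant $C$ in regime (ii) in the finite-variance case, which again combines the two sources. Your argument would silently drop the $\ind{F^{(2)}(\rho)<\infty}$ part of the stated constants.

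A secondary imprecision: you write that $\lambda'_n/r^{(\boldsymbol p)}_{n-K_n}$ ``tends to the same element of $[-\infty,\infty]$ as $\lambda_n/r_n$''. This is true only in the qualitative sense of the sign of the limit and its finiteness; the actual finite limit is $\widehat\lambda=\lim \widehat\lambda_{n-K_n}/\widehat r_{n-K_n}$, which differs from $\lambda$ by a nontrivial multiplicative factor depending on $\alpha$, $m$, and $A(\rho)$, and this is exactly what identifies the drift in the limit process $X^{\alpha,\widehat\lambda,\exc}$. This must be tracked explicitly, and the scaling property of $X^\alpha$ then produces the $t\mapsto t/m$ time change in the statement. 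The rest of the sketch (the Vervaat continuity in (i)–(ii), the explicit transform in (iii), and the sum-of-squares identification) is fine.
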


Let us note that in the last regime, we extend both sides by $0$ on the interval $[-1, 0)$; this is only because the function $t\mapsto (1-t) \ind{t>0}$ on $[0,1]$ is not c\`adl\`ag. Also, by combining both results, we deduce that $\max \{X^{n}_{1}, \ldots, X^{n}_{n}\} / \lambda_{n} \to 1$ in probability.

\begin{rem}\label{rem:var_finie_Luka}
In the case $F^{(2)}(\rho) < \infty$, one can check that in the two cases $\lambda \in \R$, or $\lambda = -\infty$ and  $\lambda_{n}/n \to 0$, we have the following unified convergences:
\[\left( \frac{1}{\sqrt{n}}\, W^{n}_{\floor{nt}} ; 0 \le  t \le  1\right)  \cvloi \sqrt{\frac{\rho F^{(2)}(\rho)}{F'(\rho)}} \, B^{\exc}
\quad\text{and}\quad
\frac{1}{n} \sum_{k=1}^{n} (X^{n}_{k})^{2} \cvproba \frac{\rho F^{(2)}(\rho)}{F'(\rho)}.\]
In other words, Theorem~\ref{thm:CVluka_bulk} also holds for $\tau=A(\rho)$ in this finite variance regime.
\end{rem}

In order to lighten the notation, in the proof of these theorems, we assume that all discrete-time processes are extended to continuous time by assigning at any time $t$ the same value as at $\floor{t}$, so we write e.g. $W_{nt}$ instead of $W_{\floor{nt}}$.

\begin{proof}[Proof of Theorem~\ref{thm:CVluka}]
Recall that the Brownian excursion $B^{\exc}$ can be constructed as the Vervaat transform $ \mathcal{V} B^{\br}$ of the Brownian bridge. Since $B^{\br}$ attains its infimum almost surely at a unique time, by standard continuity properties of the Vervaat transform, it is enough to show that the scaling limits of paths sampled according to $\P^{\boldsymbol{\theta}}_{n,K_{n}}$ is $B^{\br}$.

\textsc{Generalities.}
We let henceforth $W^{n}$ denote a random path sampled from $\P^{\boldsymbol{\theta}}_{n,K_{n}}$ and we let $\Lambda^{n}$ be as in~\eqref{eq:def_Lambda_W}.
Finally, introduce $(S^{n-K_{n}}_{k} ; 0 \le k \le n-K_{n})$  and $(L^{n}_{k}; 0 \leq k \leq n)$ as in Lemma~\ref{lem:separation_sauts}, so that the path $(W^{n}_{nt} ; 0 \le t \le 1)$ has the same law as the process defined for every $t \in [0,1]$ by
\begin{align}
S^{n-K_{n}}_{nt - L^{n}_{nt}} - L^{n}_{nt}
&= \left(S^{n-K_{n}}_{nt - L^{n}_{nt}} - K_{n} \frac{nt - L^{n}_{nt}}{n-K_{n}}\right) - \left(L^{n}_{nt} - K_{n}t\right) - K_{n} \left(t - \frac{nt - L^{n}_{nt}}{n-K_{n}}\right)
\nonumber
\\
&= \left(S^{n-K_{n}}_{nt - L^{n}_{nt}} - K_{n} \frac{nt - L^{n}_{nt}}{n-K_{n}}\right) - \frac{n}{n-K_{n}} \left(L^{n}_{nt} - K_{n}t\right)
\label{eq:decomposition_W}
.\end{align}
The process $L^{n}$ fits in our framework, but it also has already been studied in the literature. Indeed, it is a simple example of an \emph{urn process} studied in~\cite[Sec.~20]{Aldous:Saint_Flour}. By Theorem~20.7 in this reference, we have the convergence of the normalised process
\begin{equation}\label{eq:cv_Lambda_urne_Brownien}
\left(\frac{L^{n}_{nt} - K_{n} t}{\sqrt{K_{n} (n-K_{n})/n}} ; 0 \le t \le 1\right) \cvloi B^{\br}.
\end{equation}
Note that $\sqrt{K_{n} (n-K_{n})/n} \le \sqrt{n-K_{n}} = o(n-K_{n})$ since we assume that $n-K_{n} \to \infty$. Consequently
\begin{equation}\label{eq:cv_Lambda_urne_identite}
\left(\frac{nt - L^{n}_{nt}}{n-K_{n}} ; 0 \le t \le 1\right) \cvproba (t ; 0 \le t \le 1).
\end{equation}
It remains to study $S^{n-K_{n}}$. To this end, recall the definition of $F$ and $A$ from~\eqref{eq:serie_gen_poids_arbres} and let $\boldsymbol{\poids} = (\theta(k+1))_{k\ge 0}$. If $G$ and $\Psi$ are defined by~\eqref{eq:serie_gen_poids_marches}, then for every $s \in (0,\rho)$, we have
\begin{equation}\label{eq:F_A_G_Gamma}
F(s) = F(0) + s G(s)
\qquad\text{and}\qquad
A(s) = \frac{\Psi(s)}{1+\Psi(s)}
.\end{equation}

\textsc{Regime i.}
If $\boldsymbol{\theta}$ and $K_{n}$ satisfy the assumptions of Theorem~\ref{thm:CVluka_bulk}, then, according to Theorem~\ref{thm:gencrit_marches}, it holds
\[\left(\frac{1}{\sqrt{(n-K_{n}) \sigma^{2}_{b}}} \left(S^{n-K_{n}}_{(n-K_{n}) t} - K_{n} t\right) ; 0 \le t \le 1\right)  \cvloi B^{\br},\]
with $\Psi(b) = bG'(b)/G(b) = \lim_{n} K_{n}/(n-K_{n}) = \tau/(1-\tau)$ and $\sigma^{2}_{b} = \frac{b^{2} G^{(2)}(b)}{G(b)} + \Psi(b) - \Psi(b)^{2}$, where we recall that $G$ is related to $F$ by~\eqref{eq:F_A_G_Gamma}.
Combined with~\eqref{eq:cv_Lambda_urne_identite} and standard properties of the Skorokhod topology (see e.g.~\cite[Chapter~VI. Theorem~1.14]{JS03}), we may replace $(n-K_{n})t$ by $nt-L^{n}_{nt}$, so this gives the asymptotic behaviour of the first term in~\eqref{eq:decomposition_W}. Jointly with~\eqref{eq:cv_Lambda_urne_Brownien}, and since $K_{n}/n \to\tau$, this implies that, with $B^{\br,1}$ and $B^{\br,2}$ two independent Brownian bridges, we have, as processes,
\begin{eqnarray*}
\frac{1}{\sqrt{n}} \left(S^{n-K_{n}}_{nt - L^{n}_{nt}} - L^{n}_{nt}\right)
&=& \frac{1}{\sqrt{n}} \left(S^{n-K_{n}}_{nt - L^{n}_{nt}} - K_{n} \frac{nt - L^{n}_{nt}}{n-K_{n}}\right) - \sqrt{\frac{n}{(n-K_{n})^{2}}} \left(L^{n}_{nt} - K_{n}t\right)
\\
&\displaystyle \mathop{\longrightarrow}^{(d)}_{n \to \infty}& \sqrt{(1-\tau) \sigma^{2}_{b}}\cdot B^{\br,1} - \sqrt{\frac{\tau}{1-\tau}}\cdot B^{\br,2}
\\
&\displaystyle \mathop{=}^{(d)}& \sqrt{(1-\tau) \sigma^{2}_{b} + \frac{\tau}{1-\tau}} \cdot B^{\br}
.\end{eqnarray*}
Notice that $\tau = \Psi(b)/(1+\Psi(b))$, it is then a simple matter to check that
\[(1-\tau) \sigma^{2}_{b} + \frac{\tau}{1-\tau} = \frac{b^{2} G^{(2)}(b) + 2bG'(b)}{G(b) + bG'(b)} = \frac{b F^{(2)}(b)}{F'(b)}.\]
The first claim in Theorem~\ref{thm:CVluka_bulk} finally follows from Lemma~\ref{lem:separation_sauts} and the Vervaat transform. Similarly, recall that the second claim is about the limit of $n^{-1} \sum_{k=1}^{n} (X^{n}_{k})^{2}$ under $\P^{\boldsymbol{\theta},+}_{n,K_{n}}$; since this quantity is invariant under cyclic shift, it has the same law under $\P^{\boldsymbol{\theta}}_{n,K_{n}}$. Notice that the total contribution of the negative increments to the sum is $K_{n}$, then we infer from Lemma~\ref{lem:separation_sauts} and Theorem~\ref{thm:gencrit_marches} that
\[n^{-1} \sum_{k=1}^{n} (X^{n}_{k})^{2} \cvproba (1-\tau) \frac{b^{2} G^{(2)}(b) + bG'(b)}{G(b)} + \tau
= \frac{b F^{(2)}(b)}{F'(b)}.\]

\textsc{Regime ii.}
Theorem~\ref{thm:CVluka_small} can be similarly obtained: we may now combine  Theorem~\ref{thm:gencrit0_marches} with~\eqref{eq:cv_Lambda_urne_identite} and~\cite[Chapter~VI. Theorem~1.14]{JS03} to  deduce the convergence in distribution
\[\left(\frac{1}{\sqrt{K_{n}}} \left(S^{n-K_{n}}_{nt - K_{n}t} - K_{n} t\right); 0 \le t \le 1\right)  \cvloi B^{\br},\]
which, together with~\eqref{eq:decomposition_W},~\eqref{eq:cv_Lambda_urne_Brownien},~\eqref{eq:cv_Lambda_urne_identite}, and the fact that $K_{n}/n \to 0$, yields
\[\left(\frac{1}{\sqrt{K_{n}}} \left(S^{n-K_{n}}_{nt - L_{nt}} - L^{n}_{nt}\right) ; 0 \le t \le 1\right)
\cvloi B^{\br,1} - B^{\br,2}
\eqloi \sqrt{2} \cdot B^{\br}
.\]
Furthermore, since the total contribution of the negative increments to $\sum_{k=1}^{n} (X^{n}_{k})^{2}$ equals $K_{n}$, then Theorem~\ref{thm:gencrit0_marches} yields
\[\frac{1}{K_{n}} \sum_{k=1}^{n} (X^{n}_{k})^{2} \cvproba 1+1=2.\]

\textsc{Regime iii.}
As for Theorem~\ref{thm:CVluka_large}, we combine Theorem~\ref{thm:nongenneg_marches} with~\eqref{eq:cv_Lambda_urne_identite} and~\cite[Chapter~VI. Theorem~1.14]{JS03} to obtain
\[\left(\sqrt{\frac{n-K_{n}}{K_{n}^{2}}} \left(S^{n-K_{n}}_{nt - K_{n}t} - K_{n} t\right) ; 0 \le  t \le  1\right)  \cvloi \frac{1}{\sqrt{\alpha}} \cdot B^{\br},\]
which, together with~\eqref{eq:decomposition_W},~\eqref{eq:cv_Lambda_urne_Brownien},~\eqref{eq:cv_Lambda_urne_identite}, and the fact that $K_{n}/n \to 1$, yields
\[\left(\sqrt{\frac{n-K_{n}}{K_{n}^{2}}} \left(S^{n-K_{n}}_{nt - L_{nt}} - L^{n}_{nt}\right) ; 0 \le t \le 1\right)
\cvloi \frac{1}{\sqrt{\alpha}} \cdot B^{\br,1} - B^{\br,2}
\eqloi \sqrt{\frac{\alpha+1}{\alpha}} \cdot B^{\br}
.\]
And finally, since the total contribution of the negative increments to $\sum_{k=1}^{n} (X^{n}_{k})^{2}$ equals $K_{n} = o(K_{n}^{2}/(n-K_{n}))$, then by Theorem~\ref{thm:nongenneg_marches},
\[\frac{n-K_{n}}{K_{n}^{2}} \sum_{k=1}^{n} (X^{n}_{k})^{2} \cvproba \frac{\alpha+1}{\alpha}.\]
In both cases, the conclusion follows from Lemma~\ref{lem:separation_sauts} and the Vervaat transform.
\end{proof}

The same argument applies also to prove Theorem~\ref{thm:CVluka_distorted}. We shall use the same notation.

\begin{proof}[{Proof of Theorem~\ref{thm:CVluka_distorted}}]
Recall from Sec.~\ref{sec:def_ponts} the construction of a bridge $X^{\br}$ of $X$, a L\'evy process with linear drift, we denote by $X^{\exc} = \mathcal{V} X^{\br}$ the associated excursion. The L\'evy bridges which appear here are those of stable processes with a drift, their infimum is almost surely attained at a unique time and with no jump at that time. 
This can be easily derived from the absolute continuity relation~\eqref{eq:lawbridgestable}. See also~\cite{Kni96} for more general results.
Therefore, by standard continuity properties of the Vervaat transform, the scaled distributional convergence of a random path under $\P^{\boldsymbol{\theta},+}_{n,K_{n}}$ towards $X^{\exc}$ will follow from the scaled distributional convergence of the path under $\P^{\boldsymbol{\theta}}_{n,K_{n}}$ towards $X^{\br}$.

\textsc{Generalities.}
By the assumption on $F$, the function $G$ defined by~\eqref{eq:F_A_G_Gamma} is such that for every $s \in (0,1)$:
\[\frac{G(\rho s)}{G(\rho)} = 1 - \frac{L(1)}{m-L(1)} (1-s) + (1-s)^{\alpha} \widehat{L}\left(\frac{1}{1-s}\right),\]
where
\[\widehat{L}(x) = \frac{L(x) - L(1) x^{-(2-\alpha)}}{\frac{x-1}{x} (m-L(1))}.\]
The function $\widehat{L}$ is slowly varying at infinity so $G$ satisfies the assumption of Theorem~\ref{thm:stable_marches}. Note that
\[A(\rho) = \frac{L(1)}{m}
\qquad\text{and}\qquad
\frac{L(1)}{m-L(1)} = \frac{\rho G'(\rho)}{G(\rho)} = \Psi(\rho) = \frac{A(\rho)}{1-A(\rho)}.\]
Also, if $\alpha = 2$ and $L$ tends to $\ell < \infty$, then $\lim_{\infty} \widehat{L} = (\ell - L(1))/(m-L(1)) \eqqcolon \widehat{\ell}$, otherwise $\widehat{L}(x) \sim L(x)/(m-L(1))$ as $x\to\infty$.
Recall that in the former case, we let $r_{n} = \sqrt{n\sigma^{2}/2}$, where $\sigma^{2} = 2\ell + m - m^{2}$, whereas in the second case, $r_{n}$ is such that $r_{n}^{\alpha} \sim n L(r_{n})$. Define similarly $\widehat{r}_{n} = \sqrt{n\widehat{\sigma}^{2}/2}$ with $\widehat{\sigma}^{2} = 2\widehat{\ell} + \Psi(\rho) - \Psi(\rho)^{2}$ in the first case, and let $\widehat{r}_{n}$ be such that $\widehat{r}_{n}^{\alpha} \sim n \widehat{L}(\widehat{r}_{n})$ in the second case.
Since $K_{n}/n \to A(\rho)$ and $\widehat{\ell} + \Psi(\rho) = \ell/(m-L(1))$,
 if $\alpha = 2$ and $L$ tends to $\ell < \infty$, then as $n\to\infty$,
\[\frac{\widehat{r}_{n-K_{n}}}{r_{n}}
\cv \sqrt{\frac{(1-A(\rho)) \widehat{\sigma}^{2}}{\sigma^{2}}}
= \sqrt{\frac{1}{\sigma^{2}} \left(\frac{2\widehat{\ell} + 2\Psi(\rho)}{1+\Psi(\rho)} - \Psi(\rho)\right)}
= \sqrt{\frac{1}{\sigma^{2}} \left(\frac{2\ell}{m} - \Psi(\rho)\right)}
,\]
and otherwise
\[\widehat{r}_{n-K_{n}} 
\equi \frac{1}{m^{1/\alpha}} r_{n}.\]
Finally, let
\[\widehat{\lambda}_{n-K_{n}} = {K_{n} - \Psi(\rho) (n-K_{n})}
= \frac{K_{n} - A(\rho) n}{1-A(\rho) }
= \frac{\lambda_{n}}{1-A(\rho) }
.\]

\textsc{Regime i.}
Assume that $K_{n} = A(\rho)n + \lambda_{n}$ with $\lambda_{n}/r_{n} \to \lambda \in \R$ and define $\widehat{\lambda} = \lim_{n} \widehat{\lambda}_{n-K_{n}}/\widehat{r}_{n-K_{n}}$,
then Theorem~\ref{thm:stable_marches_drift} implies
\[\left(\frac{1}{\widehat{r}_{n-K_{n}}} \left(S^{n-K_{n}}_{nt-K_{n}t} - K_{n} t\right) ; 0 \le  t \le  1\right)  \cvloi X^{\alpha,\widehat{\lambda},\br}.\]
Suppose first that $\alpha=2$ and that the function $L$ has a finite limit; in this case, the scaling satisfies $r_{n} \sim \sqrt{n \sigma^{2}/2}$ and the preceding convergence, combined with~\eqref{eq:decomposition_W},~\eqref{eq:cv_Lambda_urne_Brownien},~\eqref{eq:cv_Lambda_urne_identite}, and the fact that $K_{n}/n \to A(\rho)$, yields
\[\left(\frac{1}{r_{n}} \left(S^{n-K_{n}}_{nt - L_{nt}} - L^{n}_{nt}\right) ; 0 \le t \le 1\right)
\cvloi \sqrt{\frac{1}{\sigma^{2}} \left(\frac{2\ell}{m} - \Psi(\rho)\right)} \cdot X^{2,\widehat{\lambda},\br} - \sqrt{\frac{2 \Psi(\rho)}{\sigma^{2}}} \cdot B^{\br}
,\]
where $X^{2,\widehat{\lambda},\br}$ and $B^{\br}$ are independent.
Recall that $X^{2,\widehat{\lambda},\br}$ has the same law as $\sqrt{2} \cdot B^{\br}$, so the right-hand side above as the same law as
\[
\sqrt{\frac{2\ell}{\sigma^{2}}} \cdot X^{2,\widehat{\lambda},\br}
= \sqrt{\frac{\sigma^{2}-m+m^{2}}{\sigma^{2}}} \cdot X^{2,\widehat{\lambda},\br}
.\]
Let us next turn to the limit of $r_{n}^{-2} \sum_{k=1}^{n} (X^{n}_{k})^{2}$, still in the regime $\alpha=2$ and when $L$ tends to $\ell$. Note that
\begin{align*}
\widehat{\sigma}^{2} 
&= 2 \widehat{\ell} + \Psi(\rho) - \Psi(\rho)^{2}
\\
&= 2 \left(\frac{\ell}{m} (\Psi(\rho)+1) - \Psi(\rho)\right) + \Psi(\rho) - \Psi(\rho)^{2}
\\
&= \left(\frac{2\ell}{m} - \Psi(\rho)\right) \left(\Psi(\rho)+1\right)
.\end{align*}
Since the total contribution of the negative increments to the sum is $K_{n} \sim A(\rho) n$, then we infer from Lemma~\ref{lem:separation_sauts} and Theorem~\ref{thm:stable_marches_drift} that
\[\frac{1}{r_{n}^{2}} \sum_{k=1}^{n} (X^{n}_{k})^{2} 
\cvproba \frac{1}{\sigma^{2}} \left(\frac{2\ell}{m} - \Psi(\rho)\right) \left(2+\frac{2\Psi(\rho)^{2}}{\widehat{\sigma}^{2}}\right) + \frac{2 A(\rho)}{\sigma^{2}}
= \frac{4\ell}{m \sigma^{2}}
= \frac{2 \rho F^{(2)}(\rho)}{F'(\rho) \sigma^{2}}
.\]

The cases $\alpha=2$ and $\lim_{\infty} L = \infty$ as well as $\alpha \in(1,2)$ are similar, except that now $\sqrt{n} = o(r_{n})$, thus the contribution of $L^{n}_{nt} - K_{n} t$ in~\eqref{eq:decomposition_W} is negligible and we simply obtain from Theorem~\ref{thm:stable_marches_drift}
\[\left(\frac{1}{r_{n}} \left(S^{n-K_{n}}_{nt - L_{nt}} - L^{n}_{nt}\right) ; 0 \le t \le 1\right) 
\cvloi m^{-1/\alpha} X^{\alpha,\widehat{\lambda},\br}
.\]
Note that $m^{-1/\alpha} \widehat{\lambda} = \lambda / (1-A(\rho))$; we infer from the scaling property of $X^{\alpha}$ that the right-hand side has the same law as
the process $(X^{\alpha}_{t/m} - \frac{\lambda m}{1-A(\rho)} \frac{t}{m} ; 0 \le t \le 1)$ conditioned to be at $0$ at time $1$.
The convergence of $r_{n}^{-2} \sum_{k=1}^{n} (X^{n}_{k})^{2}$ again follows from Theorem~\ref{thm:stable_marches_drift}: in the case $\alpha=2$, we directly read
\[\frac{1}{r_{n}^{2}} \sum_{k=1}^{n} (X^{n}_{k})^{2} 
\cvproba \frac{2}{m}
= \frac{2 F(\rho)}{\rho F'(\rho)},\]
whereas in the case $\alpha \in (1,2)$, 
\[\frac{1}{r_{n}^{2}} \sum_{k=1}^{n} (X^{n}_{k})^{2} 
\cvloi m^{-2/\alpha} \sum_{t \in [0,1]} (\Delta X^{\alpha,\widehat{\lambda}}_{t})^{2}
\eqloi \sum_{t \in [0,1]} (\Delta X^{\alpha,\lambda,m}_{t/m})^{2}.\]
In any case we conclude the proof of Theorem~\ref{thm:CVluka_distorted_drift} by applying Lemma~\ref{lem:separation_sauts} and the Vervaat transform.

\smallskip
\textsc{Regime ii.}
Next, in Theorem~\ref{thm:CVluka_distorted_gaussien}, recall that we assume $\lambda_{n}/r_{n} \to -\infty$ with $\lambda_{n}/n \to 0$, and we take $\varepsilon_{n}$ such that $A(\rho(1-\varepsilon_{n})) = K_{n}/n$.
Observe that $\varepsilon_{n}$ satisfies $\Psi(\rho(1-\varepsilon_{n})) = K_{n}/(n-K_{n})$; we infer from Theorem~\ref{thm:stable_marches_gaussien} that
\[\left(\sqrt{\frac{\varepsilon_{n}}{|\widehat{\lambda}_{n-K_{n}}| }} \cdot \left(S^{n-K_{n}}_{nt} - K_{n} t\right) ; 0 \le  t \le  1\right)  \cvloi \sqrt{\alpha-1} \cdot B^{\br}.\]
Observe that $\widehat{\lambda}_{n-K_{n}} \sim \lambda_{n} / (1-A(\rho))$ and recall from~\eqref{eq:varfini} that, when both $\alpha=2$ and $L$ has a finite limit, then so does $\widehat{L}$ and so
\[\frac{|\widehat{\lambda}_{n-K_{n}}|}{(n-K_{n}) \varepsilon_{n}} \cv \widehat{\sigma}^{2}
.\]
Combined with~\eqref{eq:decomposition_W},~\eqref{eq:cv_Lambda_urne_Brownien},~\eqref{eq:cv_Lambda_urne_identite}, we infer in this case that, with $B^{\br,1}$ and $B^{\br,2}$ two independent Brownian bridges,
\begin{eqnarray*}
\left(\sqrt{\frac{(1-A(\rho)) \varepsilon_{n}}{|\lambda_{n}|}} \left(S^{n-K_{n}}_{nt - L_{nt}} - L^{n}_{nt}\right) ; 0 \le t \le 1\right)
&\displaystyle \mathop{\longrightarrow}^{(d)}_{n \to \infty}& B^{\br,1} - \sqrt{\frac{\Psi(\rho)}{(1-A(\rho)) \widehat{\sigma}^{2}}} \cdot B^{\br,2}
\\
&\displaystyle \mathop{=}^{(d)}& \sqrt{1+\frac{\Psi(\rho)}{\frac{2\ell}{m} - \Psi(\rho)}} \cdot B^{\br}
\\
&=&\sqrt{\frac{2\ell}{2\ell - m \Psi(\rho)}} \cdot B^{\br}
.\end{eqnarray*}
As for $r_{n}^{-2} \sum_{k=1}^{n} (X^{n}_{k})^{2}$, still in the regime $\alpha=2$ and when $L$ tends to $\ell$, since the total contribution of the negative increments to the sum is $K_{n} \sim A(\rho) n$, then we infer from Lemma~\ref{lem:separation_sauts} and Theorem~\ref{thm:stable_marches_gaussien} and~\eqref{eq:varfini} that
\[\frac{(1-A(\rho)) \varepsilon_{n}}{|\lambda_{n}|} \sum_{k=1}^{n} (X^{n}_{k})^{2} 
\cvproba 1 + \frac{\Psi(\rho)^{2}}{\widehat{\sigma}^{2}} + \frac{\Psi(\rho)}{\widehat{\sigma}^{2}}
= \frac{2(\widehat{\ell} + \Psi(\rho))}{\widehat{\sigma}^{2}}
= \frac{2\ell}{2\ell - m \Psi(\rho)}
.\]

Now when either $\alpha \in (1,2)$ or $L\to\infty$, (so $\widehat{L} \to \infty$), recall from~\eqref{eq:equiv_scaling_distorted-new} that
\[\frac{|\lambda_{n}|}{(1-A(\rho)^{2} n \varepsilon_{n}} 
\equi \frac{|\widehat{\lambda}_{n-K_{n}}|}{(n-K_{n}) \varepsilon_{n}} 
\equi \alpha \varepsilon_{n}^{\alpha-2} \widehat{L}(1/\varepsilon_{n}),\]
which tends to infinity. Therefore the contribution of $L^{n}_{nt} - K_{n} t$, which is of order $\sqrt{n}$, is again negligible and we simply obtain
\[\left(\sqrt{\frac{\varepsilon_{n}}{|\lambda_{n}|}} \left(S^{n-K_{n}}_{nt - L_{nt}} - L^{n}_{nt}\right) ; 0 \le t \le 1\right)
\cvloi \sqrt{\frac{\alpha-1}{1-A(\rho)}} \cdot B^{\br}
.\]
Further, we read from Theorem~\ref{thm:stable_marches_gaussien} that
\[\frac{\varepsilon_{n}}{|\widehat{\lambda}_{n-K_{n}}|} \sum_{k=1}^{n} (X^{n}_{k})^{2} \cvproba \alpha-1,\]
and since now $K_{n}$ is small compared to $\varepsilon_{n} / (|\lambda_{n}|)$, then
\[\frac{\varepsilon_{n}}{|\lambda_{n}|} \sum_{k=1}^{n} (X^{n}_{k})^{2} 
\cvproba \frac{\alpha-1}{1-A(\rho)}
.\]
In any case we conclude the proof of Theorem~\ref{thm:CVluka_distorted_gaussien} by applying Lemma~\ref{lem:separation_sauts} and the Vervaat transform.

\smallskip
\textsc{Regime iii.}
For the last regime, recall that $\widehat{\lambda}_{n-K_{n}} \sim \lambda_{n} / (1-A(\rho))$ is large compared to $\sqrt{n}$, so the contribution of $L^{n}$ in the decomposition of $W^{n}$ is negligible, and we infer from Theorem~\ref{thm:stable_marches_condensation} and Lemma~\ref{lem:separation_sauts} that, under $\P^{\boldsymbol{\theta}}_{n,K_{n}}$,
\[\left( \frac{1-A(\rho)}{\lambda_{n}} \cdot \left(W^{n}_{nt} - K_{n} t\right)  ; 0 \le  t \le  1\right)  \cvloi (\ind{U \leq t} - t; 0 \leq t \leq 1),\]
Then the first claim in Theorem~\ref{thm:CVluka_distorted_condensation} follows from the Vervaat transform.
As for the second claim, we also deduce from the proof of Theorem~\ref{thm:stable_marches_condensation} that if we let $U_n \coloneqq \inf \{1\leq j \leq n : X^{n}_j=\max\{|X^{n}_i| : 1\leq i \leq n\}\}$, then
\[\frac{X^{n}_{U_{n}}}{\widehat{\lambda}_{n-K_{n}} }  \cvproba 1,
\qquad\text{and}\qquad
\frac{1}{(\widehat{\lambda}_{n-K_{n}} )^{2}} \sum_{k \ne U_{n}} (X^{n}_{k})^{2}  \cvproba 0,\]
and the second claim of Theorem~\ref{thm:CVluka_distorted_condensation} follows.
\end{proof}

\section{Random planar maps}
\label{sec:cartes}

This last section is devoted to the study of random biconditioned Boltzmann planar maps. Let us start in Sec.~\ref{sec:definition} by precisely defining the model. 
We then present in Sec.~\ref{sec:bijection} the bijection between maps and labelled trees and describe the law of the random trees which code Boltzmann maps.
Finally our main results are stated and proved in Sec.~\ref{sec:results} by relying on the results from the preceding section.
Figure~\ref{tab:bigpicture} summarises our results in the different regimes.

\begin{figure}[!ht] \centering
\includegraphics[width=\textwidth]{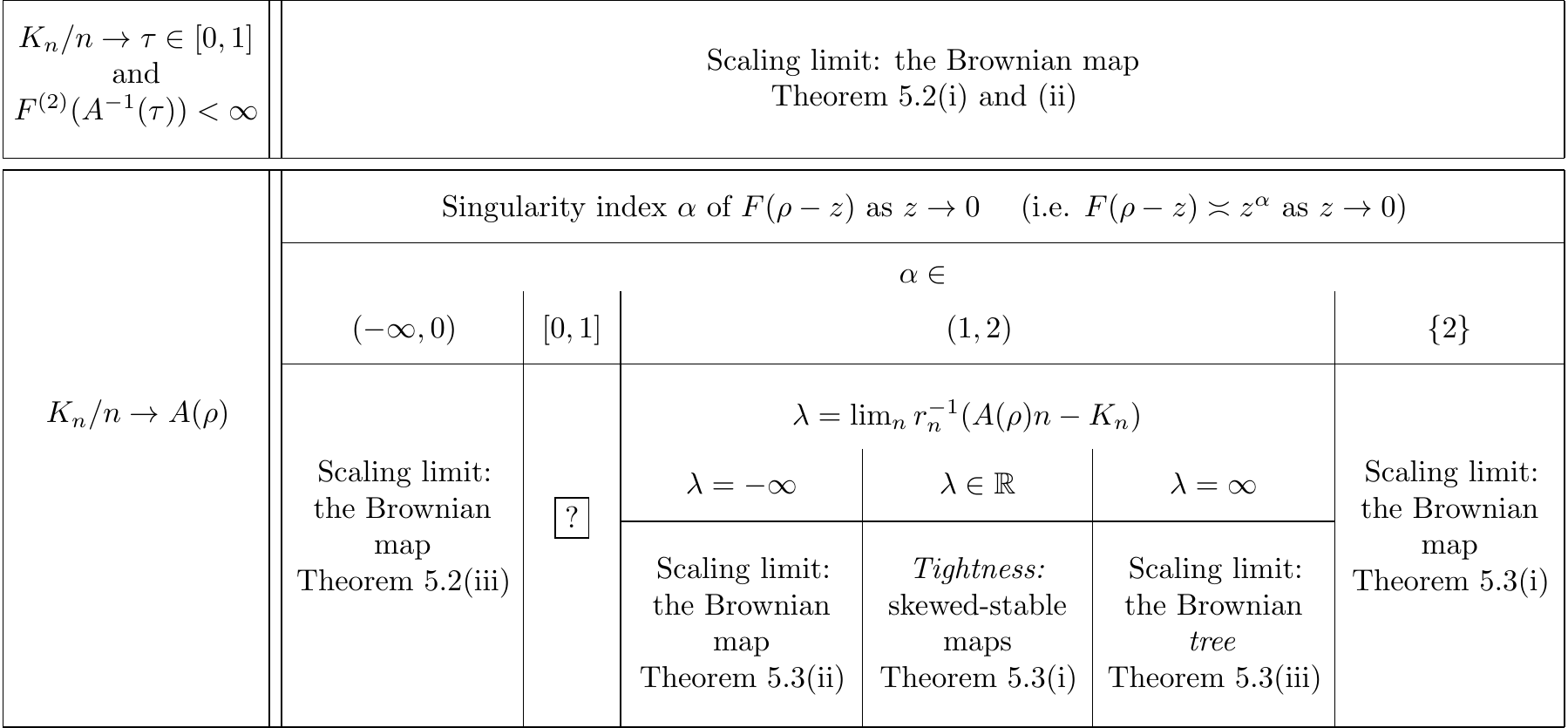}
\caption{The big picture for scaling limits of biconditioned bipartite Boltzmann planar maps with weight sequence $\q$, conditioned to have $n-1$ edges and $K_{n}+1$ vertices (and $n-K_{n}$ faces). 
The generating function $F$, with radius of convergence $\rho$, as well as the function $A$ are defined in~\eqref{eq:serie_gen_poids}. For $\alpha \in (1,2)$, the sequence $r_{n}$ is of order $n^{1/\alpha}$.}
\label{tab:bigpicture}
\end{figure}

\subsection{Biconditioned Boltzmann maps}
\label{sec:definition}

A natural generalisation of the model of random quadrangulations is that of Boltzmann random maps which allows the face degrees to be random, introduced in~\cite{MM07}; it also naturally appears when considering random maps coupled with statistical physics models, see e.g.~\cite[Sec.~8]{LGM11}. Specifically, denote by $\Map$ the set of all finite rooted bipartite planar maps, in which all faces have even degree. Let us fix a sequence of nonnegative real numbers $\q = (q_i ; i \ge 1)$ which, in order to avoid trivialities, satisfies $q_i > 0$ for at least one $i \ge 2$. We define a measure $w^{\q}$ on $\Map$ by setting
\[w^{\q}(M) = \prod_{f \text{ face}} q_{\mathrm{deg}(f)/2},
\qquad M \in \Map,\]
where $\mathrm{deg}(f)$ is the degree of the face $f$. When $w^{\q}(\Map) = \sum_{M \in \Map} w^{\q}(M)<\infty$ (one says that $\q$ is \emph{admissible}), one can sample a planar map $\map^{\q}$ at random proportionally to its $w^{\q}$-weight. The size of  $\map^{\q}$  is also random but one can condition $\map^{\q}$  to have a given number $n$ either of vertices, or of edges, or of faces and study its asymptotic behaviour as $n \to \infty$. 

Le~Gall~\cite{LG13}, based on~\cite{MM07}, proved that if, loosely speaking, the weight sequence $\q$ is such that the degree of a typical face of $\map^{\q}$  has finite exponential moments, then $\map^{\q}$, conditioned to have $n$ vertices, and rescaled by a factor of order $n^{-1/4}$ converges in distribution to the Brownian map. 
On the other hand, when the weight sequence $\q$ is such that, roughly speaking, a typical face of $\map^{\q}$ has degree $2k$ with probability of order $c k^{-1-\alpha}$ when $k \to \infty$, with $c>0$ and $\alpha \in (1,2)$, then Le~Gall \& Miermont~\cite{LGM11} proved a \emph{tightness} result: From every sequence of integers, one can extract a subsequence along which the map $\map^{\q}$ conditioned to have $n$ vertices, rescaled by a factor $n^{-1/(2\alpha)}$, converges in distribution towards a limit space which has almost surely Hausdorff dimension $2\alpha$. 
Both results were extended in~\cite{Mar18a} to the full domain of attraction of a stable law with index in $(1,2]$, and by conditioning with respect to either  the number vertices, or of edges, or of faces.

Recall from the introduction that we are interested in maps conditioned on having a large fixed number of vertices, edges, and faces at the same time.
More precisely, we consider a sequence $(K_{n})_{n\ge 1}$ of integers and, in order to discard degenerate cases, we shall always assume that both $K_{n}$ and $n-K_{n}$ tend to infinity. Let us denote by $\Map_{n,K_{n}}$ the set of all rooted bipartite planar maps with $n-1$ edges and $K_{n}+1$ vertices; by Euler's formula, all maps in $\Map_{n,K_{n}}$ have $n-K_{n}$ faces (hence the name \emph{biconditioned}). As before, a weight sequence $\q = (q_i ; i \ge 1)$ being given, we shall always implicitly assume that $n$ and $K_{n}$ are compatible with the support of $\q$, in the sense that $w^{\q}(\Map_{n,K_{n}}) > 0$; we also assume without further notice that $ \{i \geq 1: q_{i}>0\}$ is not included in a sublattice of $\Z$, i.e.~the largest $h>0$ such that there exists $a \in \R$ such that $\{i \geq 1: q_{i}>0\} \subset a+h\Z$ is $h=1$. The results carry through without such an aperiodicity condition after mild adaptations.
We then define a probability
\[\P^{\q}_{n,K_{n}}(M)=  \frac{w^{\q}(M)}{w^{\q}(\Map_{n,K_{n}})}, \qquad M \in \Map_{n,K_{n}},\]
so that $\P^{\q}_{n,k}$ is the law of a $\q$-Boltzmann planar map, biconditioned to have $n-1$ edges and $K_{n}+1$ vertices (and $n-K_{n}$ faces). Observe that $\Map_{n,K_{n}}$ is finite, so $w^{\q}(\Map_{n,K_{n}}) < \infty$ and we do not need to restrict to admissible weight sequences.
Note that if $q_{i} = 1$ for every $i \ge 1$, then $\P^{\q}_{n,K_{n}}$ is simply the uniform distribution on $\Map_{n,K_{n}}$; more generally, if $A$ is a subset of integers and  $q_{i}=\ind{i \in A}$, then $\P^{\q}_{n,K_{n}}$ is the uniform distribution on the subset of maps of $\Map_{n,K_{n}}$ having all face degrees in  $2A$.

In the same way stable processes are the only possible scaling limits of appropriately rescaled random walks, it is natural to expect that the Brownian map and the stable maps are the only possible scaling limits of appropriately rescaled size-conditioned Boltzmann planar maps. 
However, as we saw in the preceding sections, the biconditioning in a sense amounts to let the face degree distribution vary with the size, which allows more general scaling limits, in the same way L\'evy processes are scaling limits of appropriately rescaled random walks with varying jump distribution. In our case, the L\'evy processes that appear are just stable processes with a linear drift.

\subsection{Bijection with labelled trees}
\label{sec:bijection}

A powerful tool to prove scaling limits of random maps is a coding with labelled trees.
Recall the formal definition of (rooted plane) tree from Sec.~\ref{sec:arbres}. A \emph{labelling} of such tree $T$ is a function $\ell : V(T) \to \Z$ which equips each vertex with an integer, which we can view as a spatial position of the corresponding individual. We say that a labelled tree is a \emph{well-labelled tree} if it satisfies the following two constraints:
\begin{enumerate}
\item The root of the tree has label $0$;
\item For every internal vertex, say $u_0$, with $k \ge 1$ offspring, say $u_1$, \dots, $u_k$ from left to right, it holds that $\ell(u_i) \ge \ell(u_{i-1})-1$ for every $i = 1, \dots, k$, \emph{and} $\ell(u_k) = \ell(u_0)$.
\end{enumerate}
Observe that since the root has label $0$, the label of a vertex is the sum of the label increments between every two successive ancestors so the quantities $\Xi_{u_{0}} = (0, \ell(u_{1})-\ell(u_{0}), \dots, \ell(u_{k})-\ell(u_{0}))$ for each internal vertex $u_{0}$ entirely describe the labelling $\ell$; finally, the second property states that for every such vertex $u_{0}$, the vector $\Xi_{u_{0}}$ belongs to the following set of bridges:
\begin{equation}\label{eq:def_ponts_labels}
\mathscr{B}^{\ge-1}_{k} = \left\{(b_{i})_{0 \le i \le k} : b_{0} = 0 = b_{k} \text{ and } b_{i}-b_{i-1} \in \Z_{\ge-1} \text{ for } 1 \le i \le k\right\}.
\end{equation}

By combining the works~\cite{BDG04} and~\cite{JS15}, there is a (constructive) $2$-to-$1$ correspondence between well-labelled trees and bipartite planar maps with a distinguished vertex (in addition to the root-edge); the $2$-to-$1$ comes from the fact that it looses the orientation of the root-edge.
See Figure~\ref{fig:bijection_arbre_carte} for an example and~\cite[Sec.~2.3]{Mar18a} for details.
This correspondence enjoys the following properties:
\begin{enumerate}
\item The edges of the tree correspond to those of the map.
\item The internal vertices of the tree correspond to the faces of the map and the offspring number of an internal vertex is half the degree of the associated face.
\item The leaves of the tree correspond to the non-distinguished vertices of the map.
\item If one shifts all the labels so that the minimum is $1$ (one then recovers the original labels by shifting them so the root has label $0$), then the label of a leaf equals the distance in the map of the corresponding vertex to the distinguished one.
\end{enumerate}

Observe that by the properties (i) and (iii) above, the labelled tree associated with  a bipartite planar map having $n-1$ edges and $k+1$ vertices with one distinguished vertex has $n$ vertices and $k$ leaves.
The last property shows that controlling the labels on the tree is equivalent to controlling distances to the distinguished vertex in the map. It then remains to control all the pairwise distances, which is particularly challenging and was done for quadrangulations, by Le~Gall~\cite{LG13} and Miermont~\cite{Mie13}. Thanks to their work, going from the control of labels on the tree to that of all distances in the map, in the case of the Brownian map, is now rather simple. 
We will in fact not consider the labels here, but only the tree itself, and simply rely on general results from~\cite{Mar19}.

\begin{figure}[!ht] \centering
\includegraphics[width=.32\linewidth, page = 1]{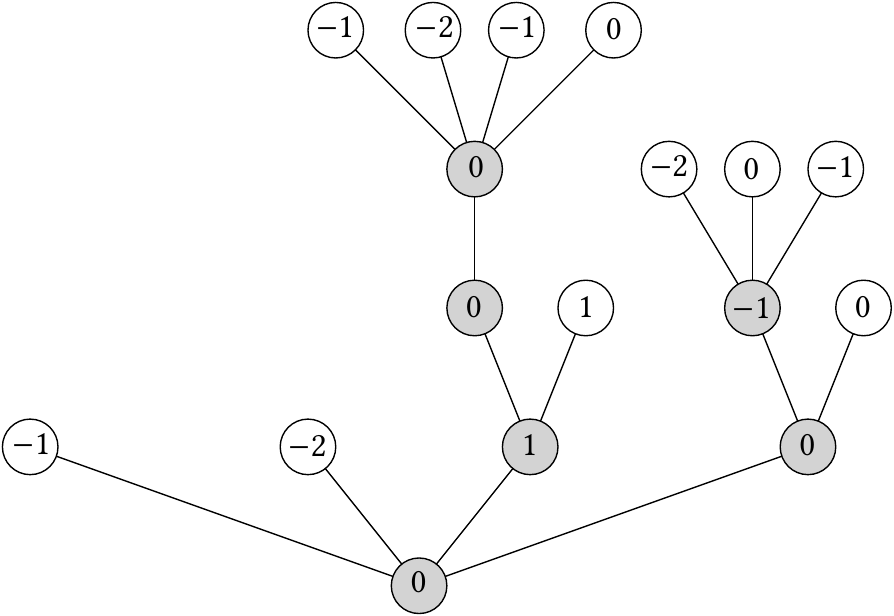}
\includegraphics[width=.33\linewidth, page = 2]{Bijection_carte_arbre}
\includegraphics[width=.33\linewidth, page = 3]{Bijection_carte_arbre}
\caption{A pointed map (right) associated with a labelled tree (left): Labels on the map indicate, up a to a shift, the graph distance to the distinguished vertex, which is the one carrying the smallest label (here $-3$). The figure in the middle indicates the construction of the map from the tree, first by applying a Schaeffer-like rule, except visiting vertices in depth-first search order, and then merging each internal vertex of the tree with its right-most offspring (represented by dashed lines).}
\label{fig:bijection_arbre_carte}
\end{figure}

Recall the law $\P^{\q}_{n,K_{n}}$ on maps with $n-1$ edges and $K_{n}+1$ vertices, induced by a sequence of weights $\q = (q_i ; i \ge 1)$. Property (ii) above allows us to determine the law of the associated random labelled tree.
In the context of the bijection from~\cite{BDG04} only, this was first discussed by Marckert \& Miermont~\cite[Proposition~7]{MM07}, and it was then adapted using bijection from~\cite{JS15} in~\cite{Mar18b}, see the proof of Proposition~11 in the latter reference.
Recall the set of bridges $\mathscr{B}_{k}$ as defined in~\eqref{eq:def_ponts_labels}; its cardinality is the binomial factor $\binom{2k-1}{k-1}$. Then define from $\q$ a weight sequence $\boldsymbol{\theta} = (\theta(i) ; i \ge 0)$ by
\begin{equation}\label{eq:poids_cartes_arbres}
\theta(0)=1 \qquad\text{and}\qquad \theta(i) = \binom{2i-1}{i-1} q_{i} \qquad\text{for every }i \ge 1.
\end{equation}
The preceding correspondence implies that the law of the labelled tree $(T, \ell)$ associated with a random map $M$ sampled from $\P^{\q}_{n,K_{n}}$, in which we distinguish a vertex chosen uniformly at random amongst the $K_{n}+1$ possibilities and independently of $M$, is as follows: First $T$ is a simply generated tree with $n$ vertices and $K_{n}$ leaves sampled from the weight sequence $\boldsymbol{\theta}$ as defined in Section~\ref{sec:arbres_aleatoires}, and then, conditionally given $T$, the labels are obtained by sampling independently for every internal vertex $u$ a uniform random bridge in $\mathscr{B}_{k_{u}}$ as defined in~\eqref{eq:def_ponts_labels}.

\subsection{Scaling limits}
\label{sec:results}

Our aim is to determine the asymptotic behaviour of a $\q$-Boltzmann random planar map $\map_{n,K_n}$ conditioned to have $n-1$ edges and $K_{n}+1$ vertices, sampled from $\P^{\q}_{n,K_n}$, and more precisely of the metric measured space obtained by endowing the set of vertices $V(\map_{n,K_n})$ with the graph distance $\dgr$, suitably rescaled, and the uniform probability measure $\pgr$, in the Gromov--Hausdorff--Prokhorov topology (see e.g.~\cite[Sec.~6]{Mie09} for details on this topology).

We need to introduce some notation. Set
\begin{equation}\label{eq:serie_gen_poids}
F(z)=1+\sum_{k=1}^{\infty}  \binom{2k-1}{k-1} q_{k}  z^{k}
\qquad\text{and}\qquad
A(z)=1- \frac{F(z)-F(0)}{zF'(z)}.
\end{equation}
Denote by $\rho$ the radius of convergence of   $F$ and set $k_{0}= \min \{k \ge  1:  q_{k} \neq 0\}$; then it is a simple matter to check that $A$ is increasing on $(0, \rho)$ and we extend it by continuity with $A(0) = 1-1/k_{0}$ and $A(\rho) < 1$ if and only if $F'(\rho) < \infty$.
These are the functions defined in~\eqref{eq:serie_gen_poids_arbres} with the sequence $\boldsymbol{\theta}$ from~\eqref{eq:poids_cartes_arbres}.

\begin{ex}
\label{ex:cartes}
It is instructive to keep in mind the example of uniform bipartite maps, which corresponds to the weight sequence $q_{i}=1$ for every $i \geq 1$. In this case, $k_{0}=1$, $\rho=1/4$, and
\[F(z)= \frac{1}{2} + \frac{1}{2\sqrt{1-4z}},
\qquad\text{and}\qquad
A(z)= \frac{6z-1+(1-4z)^{3/2}}{2z}.\]
One can easily apply the next theorem to deduce Theorem~\ref{thm:cartes_unif} appearing the introduction; for example, the scaling function $S$ in the latter is $S(x) = \frac{F'(A^{-1}(x))}{A^{-1}(x) F^{(2)}(A^{-1}(x))}$ in Theorem~\ref{thm:gencrit} below.
The assumptions of Theorem~\ref{thm:nongenneg} are also satisfied, with $\alpha = 1/2$.
\end{ex}

Extracting a subsequence if necessary, we may assume that $K_{n}/n$ converges as $n \to \infty$ to some limit $\tau \in [0,1]$.
Theorem~\ref{thm:CVluka} combined with~\cite{Mar19}  will readily entail  the following result.

\begin{thm}
\label{thm:CV_cartes}
Let $\map_{n,K_n}$ denote a $\q$-Boltzmann random planar map conditioned to have $n-1$ edges and $K_{n}+1$ vertices.
If $(\Bmap, \dBmap, \pBmap)$ denotes the Brownian map, then the convergence in distribution
\[\left(V(\map_{n,K_n}),  v_{n}^{-1/4}  \dgr, \pgr\right)  \cvloi(\Bmap, \dBmap, \pBmap)\]
holds for the Gromov--Hausdorff--Prokhorov topology
in each of the following cases:
\begin{thmlist}
\item\label{thm:gencrit} There exists $\tau \in  (1-1/k_{0},A(\rho))$ such that $K_{n}/n \to \tau$ and $\scale_{n} = \frac{4 b F^{(2)}(b) n}{9 F'(b)}$ with $b=A^{-1}(\tau)$.

\item\label{thm:gencrit0} $K_{n}/n \to 0$ with $K_{n} \to \infty$, $q_{1}, q_{2}>0$, and $\scale_{n} = 8 K_{n} / 9$.

\item\label{thm:nongenneg} $K_{n}/n \to 1$ and $n-K_{n} \to \infty$, $F$ is $\Delta$-analytic and there exist $\alpha, c>0$ such that $F(\rho-z) \sim c/z^{\alpha}$ as $z \to 0$ with $\mathrm{Re}(z)>0$,  and finally $\scale_{n} = \frac{4(1+\alpha)n^{2}}{9\alpha(n-K_{n})}$.
\end{thmlist}
\end{thm}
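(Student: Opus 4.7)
The plan is to reduce Theorem~\ref{thm:CV_cartes} to the invariance principle for Łukasiewicz paths established in Theorem~\ref{thm:CVluka}, via the bijection with labelled trees from Section~\ref{sec:bijection}, and then to invoke the Gromov--Hausdorff--Prokhorov transfer theorem of~\cite{Mar19}. First, I would sample $\map_{n,K_n}$ from $\P^{\q}_{n,K_n}$ and add an independent distinguished vertex chosen uniformly among the $K_n+1$ vertices; by the BDG--JS bijection the resulting pointed map corresponds to a labelled tree $(T_n,\ell_n)$, where $T_n$ is a simply generated tree with $n$ vertices and $K_n$ leaves for the weight sequence $\boldsymbol{\theta}$ of~\eqref{eq:poids_cartes_arbres}, and, conditionally on $T_n$, the label increments across the offspring of each branching vertex $u$ are independent uniform bridges in $\mathscr{B}_{k_u}$ as defined in~\eqref{eq:def_ponts_labels}. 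Property~(iv) of the bijection identifies, up to a global shift, the label of each leaf with the graph distance in $\map_{n,K_n}$ from the corresponding vertex to the distinguished one, so controlling labels on $T_n$ amounts to controlling distances in $\map_{n,K_n}$ to a uniform vertex.

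Next, Theorem~\ref{thm:CVluka} supplies in each of the three regimes exactly the two ingredients required by~\cite{Mar19}: the convergence of the rescaled Łukasiewicz path of $T_n$ towards a standard Brownian excursion at scale $\sqrt{\scale_n^{\mathrm{tree}}}$, and the complementary law-of-large-numbers $(\scale_n^{\mathrm{tree}})^{-1}\sum_{k=1}^n (X_k^n)^2 \to 1$ in probability. In each regime the tree scaling satisfies $\scale_n^{\mathrm{tree}} = (9/4)\,\scale_n$, so the transfer theorem of~\cite{Mar19} yields the GHP convergence of the pointed map, rescaled by $(\tfrac{4}{9}\scale_n^{\mathrm{tree}})^{-1/4} = \scale_n^{-1/4}$, towards the standard Brownian map. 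The universal factor $4/9$ is the classical one governing the transition from tree to map scaling: it traces back to the variance $\sim 2k/3$ of a uniform bridge in $\mathscr{B}_k$, combined with the normalisation of the Brownian snake, and is the same factor that appears for uniform quadrangulations and in~\cite{MM07,LG13,Mar18a}. Forgetting the independent and uniformly chosen distinguished vertex is harmless for the GHP topology, so the statement follows.

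The main subtlety — which is precisely where the novelty of our work lies — is that in regimes~(ii) and~(iii) the effective offspring distribution of $T_n$ varies with $n$ because of the biconditioning, so the classical Le~Gall--Miermont route for a fixed offspring law cannot be applied as a black box. However, this is exactly the triangular-array setting that the framework of~\cite{Mar19} is designed to handle: it takes as input a Brownian-excursion scaling limit for the Łukasiewicz path together with the $L^1$-convergence of the normalised sum of squares of its increments, and converts it into GHP convergence of the decorated map. Both hypotheses are delivered by Theorem~\ref{thm:CVluka}, itself a consequence of the local limit estimates of Section~\ref{sec:LLT}. Once this is in hand, the passage from trees to maps is essentially automatic, and the only non-routine work is therefore already packaged upstream in Section~\ref{sec:arbres}.
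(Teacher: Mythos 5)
Your proposal follows the same two-step architecture as the paper's proof (reduce to the tree via the BDG--JS bijection, feed Theorem~\ref{thm:CVluka} into the machinery of~\cite{Mar19}, and check the $4/9$ bookkeeping, which you compute correctly in all three regimes), so it is right in substance, but your description of what~\cite{Mar19} needs is slightly off-target and glosses over a condition that the paper checks explicitly. The paper does \emph{not} go through the labels or the Brownian snake normalisation at all --- Sec.~\ref{sec:bijection} is explicit that only the tree itself, not the labelling, is used. Instead, the key structural observation is that, conditionally on its face degrees, a $\q$-Boltzmann map is \emph{uniform} among maps with those prescribed degrees; one therefore views $\map_{n,K_n}$ as a mixture of the `prescribed-degree' model of~\cite{Mar19} with $N = n-K_n$ faces, whose Theorems~1--3 are stated in terms of the degree statistics $\sigma_N^2 = \sum_i d_{N,i}(d_{N,i}-1)$ and the ratio $d_{N,1}/\sigma_N$. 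Via the bijection, $\sigma_{n-K_n}^2 = -1 + \sum_{k=1}^n (X^n_k)^2$ and $d_{n-K_n,1} = 1 + \max_k X^n_k$, so the two inputs one must actually verify are: (a) $\scale_n^{-1}\sigma_{n-K_n}^2 \to 1$ in probability, which is exactly the second conclusion of Theorem~\ref{thm:CVluka}; and (b) $d_{n-K_n,1}/\sigma_{n-K_n} \to 0$ in probability (to land on $\sqrt{2/3}$ times the Brownian map rather than on the CRT), which follows because the rescaled {\L}ukasiewicz path converges to the \emph{continuous} Brownian excursion, forcing $\max_k X^n_k = o(\sqrt{\scale_n^{\mathrm{tree}}})$. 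Your phrase ``transfer theorem taking the excursion limit and the $L^1$-convergence of the sum of squares as inputs'' collapses this into a black box without stating the max-degree condition; the condition is what distinguishes the Brownian-map conclusion of~\cite[Theorem~2]{Mar19} from the Brownian-tree conclusion of~\cite[Theorem~3]{Mar19}, and is the exact reason the condensation regime of Theorem~\ref{thm:nongenstable_condensation} gives a different limit. Incidentally, the convergence in question is in probability, not in $L^1$. With that (minor) correction made explicit, your argument matches the paper's.
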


As for random paths, in the first regime, since $F^{(2)}(b) < \infty$, then essentially the degree of a typical face  in  $\map_{n,K_n}$ has uniformly bounded variance and the Brownian map is naturally expected in the limit.
In the second regime, we force the number of vertices to be small compared to the number of edges. Clearly, this requires $k_{0}=1$, i.e.~$q_{1} > 0$: in order to have much less vertices than edges, the map must have (many!) double edges, i.e.~faces with degree $2$. On the other hand we also need the extra assumption $q_{2} > 0$ to avoid periodicity issues which would require a more technical analysis. However, there is no additional regularity assumption on $F$.
Finally, in the last regime, when the number of vertices is asymptotically equivalent to the number of edges, we recall that $F$ is $ \Delta$-analytic if there exist $R>\rho$ and $\phi \in (0,\pi/2)$ such that $F$ is analytic on $ \{z : |z|<R, z \neq \rho, |\arg(z-1)|>\phi\}$, see~\cite[Definition~VI]{FS09}. This condition on $F$ is crucial and Theorem~\ref{thm:nongenstable_condensation} below shows a very different behaviour.

\smallskip
Our final result concerning random maps focuses on the case $K_{n}/n \to A(\rho)$ when the latter is finite. Here, we assume that the probability distribution with generating function $F(\rho \,\cdot)/F(\rho)$ is aperiodic and belongs to the domain of attraction of a stable law with index $\alpha \in (1, 2]$, i.e.~that there exist $\alpha \in (1,2]$, $\rho>0$, $m>0$, and a slowly varying function $L$ such that for every $0 \le  s <  \rho$,
\[F(s)=F(\rho) \left(1-m+\frac{m s}{\rho}+\left(1-\frac{s}{\rho}\right)^{\alpha} L\left(\frac{1}{1-s/\rho}\right)\right).\]
In the case $F^{(2)}(\rho) < \infty$, i.e.~when $\alpha=2$ and $L$ admits a finite limit at infinity, say $\ell$, the law with generating function $F(\rho \,\cdot)/F(\rho)$ has a finite variance, equal to
\[\sigma^{2} = 2\ell + m - m^{2}
= \frac{\rho^{2} F^{(2)}(\rho)}{F(\rho)} + \frac{\rho F'(\rho)}{F(\rho)} - \left(\frac{\rho F'(\rho)}{F(\rho)}\right)^{2}
.\]
In this case we set $r_{n} = \sqrt{n \sigma^{2}/2}$ for every $n\ge 1$, otherwise, we let $(r_{n})_{n \ge 1}$ be a sequence such that $r_{n}^{\alpha} \sim n L(r_{n})$ as $n \to \infty$.

\begin{ex}
Let us give an explicit example taken from~\cite{AMB16}. The weights
\[q_k = \frac{-\sqrt{\pi}}{2\Gamma(1-\alpha)} \left(\frac{1}{4\alpha}\right)^{k-1} \frac{\Gamma(-\alpha+k)}{\Gamma(\frac{1}{2}+k)}
\qquad\text{for}\enskip k \ge 2.
\]
give rise to the generation function $F$ satisfying $F(\alpha z)/F(\alpha) = z + \alpha^{-1} (1-z)^\alpha$. In this case, $\rho=\alpha$, $m=1$, $F(\alpha)=\alpha$, $F'(\rho)=1$, $F^{(2)}(\rho)=1/2$ when $\alpha=2$, $L$ is constant equal to $1$, and finally $A(\alpha) = 1/\alpha$. In particular, one can take $r_{n}=n^{1/\alpha}$.
\end{ex}

The next statements are our last applications to random maps.

\begin{thm}
\label{thm:nongenstable}
Under the previous assumptions, assume that $K_{n}$ takes the form
\[K_n = A(\rho) n + \lambda_{n},
\qquad\text{where}\qquad
\frac{\lambda_{n}}{r_{n}} \cv \lambda \in [-\infty,\infty].\]
The following convergences hold for the Gromov--Hausdorff--Prokhorov topology:
\begin{thmlist}
\item\label{thm:nongenstable_drift} 
If $\lambda \in \R$ and $\alpha = 2$, and
\[v_{n} = \frac{\rho F^{(2)}(\rho)}{F'(\rho)} \frac{4n}{9} \ind{F^{(2)}(\rho) < \infty}
+ \frac{F(\rho)}{\rho F'(\rho)}\frac{8 r_{n}^{2}}{9} \ind{F^{(2)}(\rho) = \infty}.\]
then
\[\left(V(\map_{n,K_n}), v_{n}^{-1/4} \dgr, \pgr\right)  \cvloi (\Bmap, \dBmap, \pBmap),\]
where $(\Bmap, \dBmap, \pBmap)$ is the Brownian map.

\item\label{thm:nongenstable_gaussien} If $\lambda = -\infty$ and $\lambda_{n}/n \to 0$, let $\varepsilon_{n} \rightarrow 0$ be defined by $A(\rho(1-\varepsilon_{n})) = K_{n}/n$, then for
\[v_{n} = \frac{\rho F^{(2)}(\rho)}{F'(\rho)} \frac{4n}{9} \ind{F^{(2)}(\rho) < \infty}
+ \frac{4}{9} \frac{\alpha-1}{1-A(\rho)} \frac{|\lambda_{n}|}{\varepsilon_{n}} \ind{F^{(2)}(\rho) = \infty},\]
we have
\[\left(V(\map_{n,K_n}), v_{n}^{-1/4} \dgr, \pgr\right)  \cvloi (\Bmap, \dBmap, \pBmap),\]
where $(\Bmap, \dBmap, \pBmap)$ is the Brownian map.

\item\label{thm:nongenstable_condensation} If $\lambda = \infty$, we further assume that $\binom{2k-1}{k-1} q_{k} \rho^{-k}$ is regularly varying at infinity with some index $-\beta < -2$. 
If $F^{(2)}(\rho)<\infty$ (which implies $\beta\ge3$), we assume furthermore that there exists $c>(\beta-3)/\sigma^{2}$ such that $\lambda_{n} \geq \sqrt{c \ln(n)}$. 
Then
\[\left(V(\map_{n,K_n}),  (2\lambda_{n})^{-1/2} \dgr, \pgr\right)  \cvloi (\CRT, \dCRT, \pCRT)\]
where $(\CRT, \dCRT, \pCRT)$ is Aldous' Brownian \emph{tree} coded by the standard Brownian excursion.
\end{thmlist}
\end{thm}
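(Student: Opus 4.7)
The plan is to reduce each of the three statements to an assertion about the underlying simply generated tree, for which Theorem~\ref{thm:CVluka_distorted} already provides the required scaling limit for the \L ukasiewicz path. More precisely, after equipping $\map_{n,K_n}$ with an independent uniform distinguished vertex, the BDG--JS bijection recalled in Sec.~\ref{sec:bijection} turns it into a well-labelled plane tree whose underlying shape is a $\boldsymbol{\theta}$-simply generated tree with $n$ vertices and $K_n$ leaves, where the weight sequence $\boldsymbol{\theta}$ of~\eqref{eq:poids_cartes_arbres} has generating function exactly the $F$ of~\eqref{eq:serie_gen_poids}. The stable-law hypothesis on $F$ and the decomposition $K_n = A(\rho)n+\lambda_n$ are therefore precisely the hypotheses of Theorem~\ref{thm:CVluka_distorted}.

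For Theorem~\ref{thm:nongenstable_drift} and Theorem~\ref{thm:nongenstable_gaussien}, the corresponding parts of Theorem~\ref{thm:CVluka_distorted} yield the joint convergence of the rescaled \L ukasiewicz path of the tree to a constant multiple of the standard Brownian excursion, together with the convergence in probability of the sum of squared increments to an explicit deterministic constant. When $\alpha=2$, the sharp observation of Remark~\ref{rem:var_finie_Luka} unifies the cases $F^{(2)}(\rho)<\infty$ and $F^{(2)}(\rho)=\infty$ into a single Brownian excursion limit. This is exactly the input required by the transfer result of~\cite{Mar19} that was already invoked in the proof of Theorem~\ref{thm:CV_cartes}: it converts such a Brownian \L ukasiewicz convergence into a Gromov--Hausdorff--Prokhorov convergence of the rescaled maps to the Brownian map. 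The only bookkeeping left is to identify the scaling $\scale_n$: the universal factor $4/9$ comes from the conditional law of the spatial bridges $\Xi_u$ in the BDG--JS construction, whilst the remaining $F$-dependent constant is read off from the deterministic limit of the sum of squared increments provided by Theorem~\ref{thm:CVluka_distorted}, matching the constants already computed in Theorem~\ref{thm:CV_cartes}.

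The delicate case is Theorem~\ref{thm:nongenstable_condensation}, where the \L ukasiewicz path no longer converges to a Brownian excursion so the machinery of~\cite{Mar19} does not apply directly. By Theorem~\ref{thm:CVluka_distorted_condensation}, condensation occurs in the coding tree: with high probability a single internal vertex $u_{\star}$ has offspring number $\sim \lambda_n$, all other degrees being $o(\lambda_n)$ at this scale. Via the bijection this vertex becomes a macroscopic face of $\map_{n,K_n}$ of degree $\sim 2\lambda_n$, whose boundary carries, conditionally on the shape, a uniform labelling in $\mathscr{B}^{\ge-1}_{k_{u_{\star}}}$. The plan is then to prove that, once rescaled by $\sqrt{2\lambda_n}$, the map collapses onto Aldous' CRT: on the one hand, the rescaled boundary labels of the giant face converge, after a Vervaat transform, to a Brownian excursion, whose coded CRT is precisely the target $(\CRT,\dCRT,\pCRT)$; on the other hand, the small subtrees dangling from the boundary of $u_\star$, together with the ancestral branch of $u_\star$ and its subtrees, have label range $o(\sqrt{\lambda_n})$ by Theorem~\ref{thm:CVluka_distorted_condensation} (since $\max_{k \ne V_n}|X^n_k| = o(\lambda_n)$ and there are at most $n-K_n = O(n)$ such increments, which combined with standard BDG bounds for label ranges gives the needed control), and hence contribute negligibly in the Gromov--Hausdorff--Prokhorov limit. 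The main technical obstacle is precisely this second point, namely showing that distances in the non-condensed part are uniformly $o(\sqrt{\lambda_n})$ and that no anomalous shortcut between distant boundary vertices of the giant face can shorten macroscopic distances; this is analogous to the condensation analysis of stable maps conditioned by their number of edges carried out in~\cite{Mar19} and will rely on uniform tail bounds for the labels along the boundary, guaranteed by the additional logarithmic lower bound on $\lambda_n$ imposed in the statement when $F^{(2)}(\rho)<\infty$.
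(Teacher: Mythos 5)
Your reduction for Theorems~\ref{thm:nongenstable_drift} and~\ref{thm:nongenstable_gaussien}—using the BDG--JS bijection, Theorem~\ref{thm:CVluka_distorted} and Remark~\ref{rem:var_finie_Luka} for the \L ukasiewicz path together with the transfer mechanism of~\cite{Mar19}—matches the paper's proof. (One small clarification: the factor $4/9$ does not need to be re-derived from the conditional law of the spatial bridges; it is already baked into~\cite[Theorem~2]{Mar19}, which yields $\sqrt{2/3}$ times the Brownian map under the $\sigma_N^{-1/2}$ scaling, and the $F$-dependent constants just come from the limit of $\sigma_{n-K_n}^2$, which equals $-1+\sum_k (X^n_k)^2$ in law.)

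For Theorem~\ref{thm:nongenstable_condensation}, however, there is a genuine gap in your reasoning. You assert that because the \L ukasiewicz path no longer converges to a Brownian excursion, ``the machinery of~\cite{Mar19} does not apply directly,'' and you then embark on a bespoke condensation analysis of the giant face and its boundary labels. But the paper's proof applies exactly the same mixture argument used for the Brownian-map cases: the biconditioned Boltzmann map, given its multiset of face degrees, is a uniform map with prescribed degrees, and~\cite[Theorem~3]{Mar19} states precisely that if $d_{N,1}/\sigma_N \to 1$ then the rescaled map converges to $\sqrt{2}$ times the Brownian tree. Theorem~\ref{thm:CVluka_distorted_condensation} supplies the two required inputs for this criterion: the max-increment and the sum of squared increments are both asymptotically $\lambda_n^2$, so $d_{n-K_n,1}/\sigma_{n-K_n} \to 1$ and $\sigma_{n-K_n} \sim \lambda_n$ in probability, giving $(2\lambda_n)^{-1/2}$ as the scaling. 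The direct argument you outline—label ranges on the boundary of the giant face, Vervaat transform of the boundary labels, absence of anomalous shortcuts—is essentially a (partial) re-derivation of that theorem of~\cite{Mar19} and you yourself flag the key estimates as unresolved (``the main technical obstacle''); these are nontrivial and would need to be supplied. The intended proof sidesteps all of this by invoking the black-box result.
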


We shall establish these convergences by combining  Theorem~\ref{thm:CVluka_distorted} with~\cite{Mar19}. Our last result concerns the first regime, $\lambda \in \R$, when $\alpha \in (1,2)$. In this case, we expect new limit spaces to appear, and  in this direction here we establish a tightness result as a first step.

\begin{thm}\label{thm:skewed_stable}
In the same framework as the previous theorem, still assuming that $K_{n}$ takes the form
\[K_n = A(\rho) n + \lambda_{n},
\qquad\text{where}\qquad
\frac{\lambda_{n}}{r_{n}} \cv \lambda \in \R.\]
When $\alpha \in (1,2)$, the sequence
\[\left(V(\map_{n,K_n}),  \frac{1}{\sqrt{r_{n}}} \dgr, \pgr\right)_{n \ge 1}\]
is tight for the Gromov--Hausdorff--Prokhorov topology and nondegenerate (in the sense that it does not converge towards a deterministic limit).
\end{thm}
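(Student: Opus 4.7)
The plan mirrors the strategy used for Theorems~\ref{thm:CV_cartes} and~\ref{thm:nongenstable}: one lifts the problem to the coding labelled tree of Sec.~\ref{sec:bijection} and combines the scaling estimates on the \L ukasiewicz path from Sec.~\ref{sec:Luka} with the general GHP-tightness machinery of~\cite{Mar19}. Specifically, I would associate to $\map_{n,K_n}$ (with a uniformly distinguished vertex) its labelled tree $(T_n,\ell_n)$; then $T_n$ has law $\P^{\boldsymbol{\theta}}_{n,K_n}$ with $\boldsymbol{\theta}$ defined by~\eqref{eq:poids_cartes_arbres}, and the hypothesis on $F$ places this tree in the framework of Theorem~\ref{thm:CVluka_distorted_drift} with $\alpha \in (1,2)$. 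Since $\alpha < 2$ forces $F^{(2)}(\rho) = \infty$, that theorem yields the joint convergence in distribution
\[
\left(\frac{1}{r_n} W^n_{\lfloor nt \rfloor}; 0 \le t \le 1\right) \cvloi \left(X^{\alpha,\widehat{\lambda},\exc}_{t/m}; 0 \le t \le 1\right),
\qquad
\frac{1}{r_n^2} \sum_{k=1}^n (X^n_k)^2 \cvloi \sum_{t \in [0,1]} \big(\Delta X^{\alpha,\widehat{\lambda},\exc}_{t/m}\big)^2,
\]
with $\widehat{\lambda} = \lambda m/(1-A(\rho))$.

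These are precisely the inputs required to apply the methodology of~\cite{Mar19}. Conditionally on $T_n$, the labels $\ell_n$ are obtained by gluing independent centred uniform bridges on $\mathscr{B}^{\ge -1}_{k_u}$ along the tree, whose fluctuations at a vertex of out-degree $k_u$ are of order $k_u^{1/2}$, so the total label fluctuation along any geodesic in $T_n$ is essentially controlled by $(\sum_k (X^n_k)^2)^{1/2}$, of order $r_n$. Since graph distances in $\map_{n,K_n}$ are themselves bounded, up to multiplicative constants, by label fluctuations on $T_n$ (property~(iv) of Sec.~\ref{sec:bijection}), the natural spatial scale is $r_n^{1/2}$. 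The next step is to import from~\cite{Mar19} the fact that joint tightness of the coding functions (\L ukasiewicz path, sum of squared increments, induced label process) upgrades to GHP-tightness of the vertex set at this scale; the argument is identical to the corresponding step in the proofs of Theorems~\ref{thm:CV_cartes} and~\ref{thm:nongenstable}, the only difference being that here the limiting \L ukasiewicz path is a drifted stable excursion rather than a Brownian excursion.

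For non-degeneracy, I would exhibit two vertices at macroscopic distance: pick two uniform vertices $v,v'$ in $\map_{n,K_n}$ and let $v_*$ be the distinguished vertex, then property~(iv) of Sec.~\ref{sec:bijection} gives $\dgr(v,v_*)+\dgr(v',v_*) \ge |\ell_n(v)-\ell_n(v')|$, and rescaled by $r_n^{-1/2}$ the right-hand side converges along the extracted subsequence to the absolute difference of the (Gaussian-type) labels at two independent uniform points of the limiting random real tree coded by $X^{\alpha,\widehat{\lambda},\exc}$, which is strictly positive with positive probability; thus no subsequential limit can be a single point.

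The main obstacle is that, unlike in the Brownian regimes of Theorems~\ref{thm:CV_cartes}--\ref{thm:nongenstable_gaussien}, the limit object along extracted subsequences is not directly identifiable within the present paper: the candidate $(\alpha,\widehat{\lambda})$-skewed-stable maps are constructed and studied in the companion paper~\cite{dimensions}, so we content ourselves with tightness here.
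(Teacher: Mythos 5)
Your proposal follows essentially the same route as the paper: pass to the coding labelled tree, feed the $\alpha\in(1,2)$ case of Theorem~\ref{thm:CVluka_distorted_drift} (Łukasiewicz path plus convergence in distribution of $r_n^{-2}\sum_k(X^n_k)^2$) into the GHP-tightness machinery of~\cite{Mar19}, and conclude. The only organizational difference is that the paper invokes~\cite{Mar19} at the level of maps rather than trees, by observing that a biconditioned Boltzmann map is a mixture over face-degree sequences of uniform maps with prescribed degrees, so that \cite[Theorem~1]{Mar19} directly gives tightness at the random scale $\sigma_{n-K_n}^{-1/2}$ with $\sigma_{n-K_n}^2 = -1 + \sum_k (X^n_k)^2$, after which the convergence from Theorem~\ref{thm:CVluka_distorted_drift} allows replacing this random factor by $r_n^{-1/2}$; your rederivation via the label process is the content of the same theorem of~\cite{Mar19} rather than its statement, so it is the same argument one level deeper.
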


In the case of maps solely conditioned by their number of vertices, Le~Gall \& Miermont~\cite{LGM11} proved such a tightness result by showing that the \emph{label process} 
(which records the label of the vertices of the tree in depth-first search order) converges in distribution. This enabled them to show that all subsequential limits have the same Hausdorff dimension $2\alpha$ almost surely; furthermore some limit theorems then do not require the extraction of a subsequence, such as the convergence of the profile of distances to an independent uniform random vertex.

Here, Theorem~\ref{thm:skewed_stable} follows by a direct application of~\cite{Mar19} which only shows tightness of the label process, which does not provide any extra information on the limit spaces other than the scale of the typical distances.
In the companion paper~\cite{dimensions}, we extend Le~Gall \& Miermont's results which involved the so-called `continuous distance process' based on the excursion of stable L\'evy process to more general L\'evy processes with no negative jump.
Applied to the present setting of biconditioned maps, we obtain that all subsequential limits in Theorem~\ref{thm:skewed_stable} have the same Hausdorff, packing, and Minkowski dimensions, all equal to $2\alpha$ almost surely, independently of $\lambda \in \R$.
Finally, in the case $\lambda=0$, the limit spaces are the same as in~\cite{LGM11}.

\begin{rem}
It is interesting to note that in the case $m<1$, under an additional regularity assumption, the scaling limit of $\q$-Boltzmann planar maps conditioned on having a large number of edges is the Brownian CRT (see~\cite{JS15} and~\cite[Theorem~10(i)]{Mar19}), but an appropriate biconditioning also allows to escape this universality class.
\end{rem}

\begin{proof}[Proof of Theorems~\ref{thm:CV_cartes},~\ref{thm:nongenstable}, and~\ref{thm:skewed_stable}]
The paper~\cite{Mar19} specifically studies  a model of uniform random maps `with prescribed degrees' in the sense that given, say, $N$ positive integers $d_{N,1} \ge \cdots \ge d_{N,N}$,  a (rooted planar) map is sampled uniformly at random amongst all those with $N$ faces, with degrees $2d_{N,1}, \dots, 2d_{N,N}$. Then one wants to understand to behaviour of such a map $M_{N}$ as $N \to \infty$ in terms of the triangular array $(d_{N,i})_{N \ge i \ge 1}$. It is shown in~\cite[Theorem~1]{Mar19} that 
the sequence
\[\left(V(M_{N}),  \sigma_{N}^{-1/2} \dgr, \pgr\right)_{n \ge 1}
\qquad\text{where}\qquad
\sigma_{N}^{2} = \sum_{i=1}^{N} d_{N,i} (d_{N,i}-1),\]
is always tight for the Gromov--Hausdorff--Prokhorov topology. 
Moreover, by Theorem~2 and Theorem~3 respectively in~\cite{Mar19}, as $N \to \infty$,
\begin{itemize}
\item if $\sigma_{N}^{-1} d_{N,1} \to 0$, then it converges in distribution towards $\sqrt{2/3}$ times the Brownian map $(\Bmap, \dBmap, \pBmap)$;
\item if $\sigma_{N}^{-1} d_{N,1} \to 1$,
then it converges in distribution towards $\sqrt{2}$ times the Brownian tree $(\CRT, \dCRT, \pCRT)$.
\end{itemize}

Since two maps with same face degrees have the same $w^{\q}$-weight,  a size-conditioned Boltzmann map $\map_{n,K_n}$ sampled from $\P^{\q}_{n,K_{n}}$ can be seen as a mixture of this model with $N=n-K_{n}$: One first samples the degrees $d_{n-K_{n},1} \ge \cdots \ge d_{n-K_{n},n-K_{n}}$ randomly with an appropriate distribution, and then conditionally given this sequence, the map $\map_{n,K_n}$ has the uniform distribution on maps with $n-K_{n}$ faces, with degrees respectively $2d_{n-K_{n},1}, \dots, 2d_{n-K_{n},n-K_{n}}$.  
We then deduce that, once rescaled by a, now \emph{random}, factor, the sequence of maps $(\sigma_{n-K_{n}}^{-1/2} \map_{n,K_n})_{n \ge 1}$ is tight, and it suffices to prove the convergence in probability $d_{n-K_{n},1} / \sigma_{n-K_{n}} \to 0$ as $n \to \infty$ to deduce that they converge to $\sqrt{2/3}$ times the Brownian map, or that $d_{n-K_{n},1} / \sigma_{n-K_{n}} \to 1$ in probability to deduce that they converge to $\sqrt{2}$ times the Brownian tree. 

According to Sec.~\ref{sec:bijection}, the half face-degrees of $\map_{n,K_{n}}$ are given by the nonnegative increments plus one of the \L ukasiewicz path of the associated tree. Further, from Sec.~\ref{sec:arbres_aleatoires}, the latter has the law $\P^{\boldsymbol{\theta},+}_{n,K_{n}}$, where $\boldsymbol{\theta}$ is defined by~\eqref{eq:poids_cartes_arbres}. If we denote by $X^{n}_{k}$ the increments of this path, then the quantity $\sigma_{n-K_{n}}^{2}$ has therefore the same law as
\[\sum_{k=1}^{n} X^{n}_{k} (X^{n}_{k}+1) = -1 + \sum_{k=1}^{n} (X^{n}_{k})^{2}.\]
We infer from Theorems~\ref{thm:CVluka} and~\ref{thm:CVluka_distorted}, with also Remark~\ref{rem:var_finie_Luka} in the finite variance regimes that:
\begin{enumerate}
\item The random factor $\sigma_{n-K_{n}}$ can be replaced by the deterministic factor used in Theorems~\ref{thm:CV_cartes}, Theorem~\ref{thm:nongenstable}, and~\ref{thm:skewed_stable} respectively;

\item The ratio $d_{n-K_{n},1} / \sigma_{n-K_{n}}$ converges in probability to $0$ as $n \to \infty$ in Theorem~\ref{thm:CV_cartes} as well as in Theorem~\ref{thm:nongenstable_drift} and in Theorem~\ref{thm:nongenstable_gaussien}.

\item The ratio $d_{n-K_{n},1} / \sigma_{n-K_{n}}$ converges in probability to $1$ as $n \to \infty$ in Theorem~\ref{thm:nongenstable_condensation}.
\end{enumerate}
Our claim then follows from~\cite{Mar19}.
\end{proof}

\begin{appendix}
\section{On the sum of the increments squared}
\label{sec:preuves_LLT}

In this section, we provide the proof of the convergence of the sum of the squares of the increments of the simply generated random path $S^{n}$ conditioned to end at $S^{n}_{n} = x_{n}$, as stated in 
Theorems~\ref{thm:CVmarches},~\ref{thm:stable_marches_gaussien}, 
and finally~\ref{thm:stable_marches_drift} in the case $\alpha=2$. These results were needed when dealing with random maps just above.
Throughout this section, we shall denote by $X^{n}_{k} = S^{n}_{k}-S^{n}_{k-1}$ for every $1 \le k \le n$ the increments of the random bridge $S^{n}$.

Each regime requires a different argument and is studied in a separate subsection. 
Recall the notation $G$ and $\Psi$ from~\eqref{eq:serie_gen_poids_marches}.
In each regime studied from Sec.~\ref{ssec:gencrit} to Sec.~\ref{sec:regime_stable_moins_infini}, we have $x_{n}/n < \Psi(\rho)$ for every $n$ large enough so we can define as in the proof of Proposition~\ref{prop:LLT_implique_convergence_loi_varie} a random variable $\widehat{X}^{n}$ with generating function $\widehat{G}^{n} = G(b_{n} \,\cdot) / G(b_{n})$, where $b_{n}$ is such that $\E[\widehat{X}^{n}] = \Psi(b_{n}) = x_{n}/n$. We will use the explicit expressions of the first moments of $\widehat{X}^{n}$, which are given by~\eqref{eq:moments}.
We let further $(\widehat{S}^{n}_{i} ; i \ge 0)$ denote a random walk with such i.i.d. steps. 
On the other hand, in Sec.~\ref{sec:regime_stable_drift}, we have $x_{n}/n \ge \Psi(\rho)$, so the latter is finite and we now define $\widehat{X}$ similarly but with $b = \rho$ instead of $b_{n}$; then $(\widehat{S}_{i} ; i \ge 0)$ shall denote a random walk with such i.i.d. steps.
In both cases, Lemma~\ref{lem:tilting} shows that the paths $(\widehat{S}^{n}_{i} ; 0 \le i \le n)$ under $\P(\,\cdot\mid \widehat{S}^{n}_{n} = x_{n})$ and $(\widehat{S}_{i} ; 0 \le i \le n)$ under $\P(\,\cdot\mid \widehat{S}_{n} = x_{n})$ have the same law as our original path $S^{n}$.

\subsection{The bulk regime}
\label{ssec:gencrit}

In this first subsection, we focus on Theorem~\ref{thm:gencrit_marches}, i.e.~we assume that $x_{n}/n$ converges to some limit $\gamma \in (i_{\boldsymbol{\poids}}, \Psi(\rho))$. 
Let $b_{n}$ and $\widehat{X}^{n}$ as above, then $b_{n} \to b$ where $b \in (0, \rho)$ is such that $\Psi(b) = \gamma$; we let similarly $\widehat{X}^{b}$ have the generating function $\widehat{G}^{b}(s) = G(b s) / G(b)$. 
The next result completes Theorem~\ref{thm:gencrit_marches} and was used in the proof of Theorem~\ref{thm:gencrit}.

\begin{prop}
\label{prop:ctegeneric}
The following convergence holds in probability:
\[\frac{1}{n} \sum_{k=1}^{n} (X^{n}_{k})^{2} \cvproba \E\big[(\widehat{X}^{b})^{2}\big] = \frac{b^{2} G^{(2)}(b) + b G'(b)}{G(b)}
.\]
\end{prop}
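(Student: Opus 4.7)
The plan is to deduce the convergence from the corresponding unconditional law of large numbers for the tilted walk, using the local limit estimate of Theorem~\ref{thm:LTT_bulk} as an absolute continuity bound.

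First, by Lemma~\ref{lem:tilting}, the path $S^{n}$ under $\P^{\boldsymbol{p},n}_{x_{n}}$ has the same law as the walk $\widehat{S}^{n}$ with i.i.d.\ steps distributed as $\widehat{X}^{n}$ (of generating function $G(b_{n}\cdot)/G(b_{n})$), conditioned on $\widehat{S}^{n}_{n}=x_{n}$. Since $b_{n}\to b$ lies strictly inside $(0,\rho)$, the function $G$ and all its derivatives are continuous at $b$, so by~\eqref{eq:moments} we have $\E[(\widehat{X}^{n})^{2}]\to\E[(\widehat{X}^{b})^{2}]$ and $\E[(\widehat{X}^{n})^{4}]\to\E[(\widehat{X}^{b})^{4}]<\infty$. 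A standard second moment estimate therefore gives, under the unconditional law,
\[\frac{1}{n}\sum_{k=1}^{n}(\widehat{X}^{n}_{k})^{2}\cvproba \E[(\widehat{X}^{b})^{2}].\]

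Next, I transfer this convergence to the bridge by splitting the sum at $\lfloor n/2\rfloor$. For the first half, given any event $A$ depending only on $(\widehat{S}^{n}_{k})_{0\le k\le\lfloor n/2\rfloor}$, the Markov property gives
\[\P(A\mid \widehat{S}^{n}_{n}=x_{n})=\E\left[\ind{A}\,\frac{\P(\widehat{S}^{n}_{n}-\widehat{S}^{n}_{\lfloor n/2\rfloor}=x_{n}-\widehat{S}^{n}_{\lfloor n/2\rfloor})}{\P(\widehat{S}^{n}_{n}=x_{n})}\right].\]
Theorem~\ref{thm:LTT_bulk} applied to the random walk $\widehat{S}^{n}$ shows that $\sqrt{n}\,\P(\widehat{S}^{n}_{n}=x_{n})\to(2\pi\sigma_{b}^{2})^{-1/2}>0$ while the numerator is uniformly bounded by $\sup_{k}\P(\widehat{S}^{n}_{n-\lfloor n/2\rfloor}=k)=O(1/\sqrt{n})$ by the same theorem; hence the Radon--Nikodym derivative is bounded by some constant $C$ uniformly in $n$. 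Consequently any event of vanishing unconditional probability still has vanishing conditional probability, and the unconditional weak law of large numbers on the first half transfers to the bridge. By time reversal invariance of the bridge (noted in the proof of Proposition~\ref{prop:LLT_implique_convergence_loi_varie}), the second half is handled identically, and summing the two halves yields the claim.

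The main technical point is the uniform boundedness of the density ratio, but in the bulk regime this is an immediate consequence of Theorem~\ref{thm:LTT_bulk}: the denominator is of order $1/\sqrt{n}$ with a strictly positive limit constant, while the numerator is dominated by the $L^{\infty}$ bound on a Gaussian density of width $\sqrt{n}$. No further work is required beyond plugging the LLT estimate into the Markov bridge decomposition.
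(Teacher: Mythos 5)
Your proof is correct and follows essentially the same route as the paper: tilt via Lemma~\ref{lem:tilting} to reduce to a conditioned i.i.d.\ walk, establish the unconditional law of large numbers for $\frac{1}{n}\sum(\widehat{X}^{n}_{k})^{2}$, then transfer to the bridge by an absolute continuity argument over each half-interval plus time reversal, with the LLT of Theorem~\ref{thm:LTT_bulk} providing the uniform bound on the Radon--Nikodym derivative. The only substantive difference is in the law-of-large-numbers step: you apply Chebyshev directly, using that $b_{n}\to b\in(0,\rho)$ gives uniformly bounded fourth moments and hence $\Var\bigl(\tfrac{1}{n}\sum(\widehat{X}^{n}_{k})^{2}\bigr)=O(1/n)$; the paper instead invokes the truncation proof of the weak law and only uses the fourth moment bound to show $(\widehat{X}^{n}_{1})^{2}-s_{n}^{2}$ is uniformly bounded in $L^{2}$. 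Your version is a slight simplification in this bulk regime; the paper's truncation template is likely preferred there because the same scaffolding is reused in the subsequent regimes (Propositions~\ref{prop:ctegencritfewvertices}--\ref{prop:ctealpha2}) where the fourth-moment bound is less uniform. One small point worth making explicit: you apply the LLT bound at the intermediate time $n-\lfloor n/2\rfloor$, whereas Theorem~\ref{thm:LTT_bulk} is stated at time $n$; the extension to times $\lfloor nt\rfloor$ for fixed $t>0$ is what the hypothesis~\eqref{eq:LLT_S_varie} of Proposition~\ref{prop:LLT_implique_convergence_loi_varie} records, and it holds here because the argument of Theorem~\ref{thm:LTT_bulk} is insensitive to replacing $n$ by $\lfloor nt\rfloor$ while keeping $b_{n}$ fixed.
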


\begin{proof}
According to Lemma~\ref{lem:tilting}, the law of the path $S^{n}$ and the random walk $\widehat{S}^{n}$ under $\P(\,\cdot\mid \widehat{S}^{n}_{n}=x_{n})$ are the same so we may replace the increments of the former by those of the latter. Let us therefore consider a sequence $(\widehat{X}^{n}_{k})_{k \ge 1}$ of i.i.d. random variables with generating function $\widehat{G}^{n} = G(b_{n} \,\cdot) / G(b_{n})$, and recall their second moment from~\eqref{eq:moments}, namely
\[s^{2}_{n} = \E\big[(\widehat{X}^{n}_{1})^{2}\big] 
= \frac{b_{n}^{2} G^{(2)}(b_{n}) + b_{n} G'(b_{n})}{G(b_{n})}.\]
Then $s^{2}_{n} \to s^{2} = \E[(\widehat{X}^{b})^{2}]$.

Fix $\varepsilon>0$; the proof by truncation of the weak law of large numbers provided e.g. in~\cite[p. 271]{Gut13}, shows the following: let $\Sigma_{n} = \sum_{k=1}^{n} ((\widehat{X}^{n}_{k})^{2} - s_{n}^{2})$ and $\Sigma_{n}' = \sum_{k=1}^{n} ((\widehat{X}^{n}_{k})^{2} - s_{n}^{2}) \ind{|(\widehat{X}^{n}_{k})^{2} - s_{n}^{2}| \le  n \varepsilon^{3}}$, then
\begin{align*}
\Pr{\left|\Sigma_{n}-\Es{\Sigma_{n}'}\right| > n \varepsilon} 
&\le \varepsilon\, \Es{\left|(\widehat{X}^{n}_{1})^{2} - s_{n}^{2}\right|} + n\, \Pr{\left|(\widehat{X}^{n}_{1})^{2} - s_{n}^{2}\right| > n \varepsilon^{3}}
\\
&\le 2\varepsilon s_{n}^{2} + \varepsilon^{-3}\, \Es{\left|(\widehat{X}^{n}_{1})^{2} - s_{n}^{2}\right| \ind{|(\widehat{X}^{n}_{1})^{2} - s_{n}^{2}| > n \varepsilon^{3}}},
\end{align*}
and
\[n^{-1} \left|\Es{\Sigma_{n}'}\right| \le \Es{\left|(\widehat{X}^{n}_{1})^{2} - s_{n}^{2}\right| \ind{|(\widehat{X}^{n}_{1})^{2} - s_{n}^{2}| > n \varepsilon^{3}}}.\]
Recall the fourth moment of $\widehat{X}^{n}_{1}$ from~\eqref{eq:moments}; since $b_{n} \to b \in (0,\rho)$, then $\E[(\widehat{X}^{n}_{1})^{4}]$ converges to the similar quantity with $b$ in place of $b_{n}$ (which equals $\E[(\widehat{X}^{b})^{4}]$). Since $s_{n}^{2} \to s^{2}$, we deduce that $((\widehat{X}^{n}_{1})^{2} - s_{n}^{2})_{n}$ is uniformly bounded in $L^{2}$ and therefore the right-hand side in the preceding display converges to $0$. Hence,
\[\lim_{n\to\infty} n^{-1} \Es{\Sigma_{n}'} = 0
\qquad\text{and}\qquad
\limsup_{n\to\infty} \Pr{\left|\Sigma_{n}-\Es{\Sigma_{n}'}\right| > n \varepsilon} 
\le 2\varepsilon s^{2}
.\]
Since this holds for every $\varepsilon>0$, we conclude that $n^{-1} \Sigma_{n}$ converges to $0$ in probability, so
\[\frac{1}{n} \sum_{k=1}^{n} (\widehat{X}_{k}^{n})^{2}  \cvproba  s^{2},\]
under the unconditional law. Then as in the proof of Proposition~\ref{prop:LLT_implique_convergence_loi_varie}, by absolute continuity, we can transfer this convergence to the first half of the bridge, i.e.~under $\P( \, \cdot \mid \widehat{S}^{n}_{n}=x_{n})$; by time-reversal the same holds also for the second half of the bridge. Since this bridge of $\widehat{S}^{n}$ has the same law as our path $S^{n}$, this shows our assertion.
\end{proof}

\subsection{The small endpoint regime}
\label{sec:LLT_gen_peu_feuilles}

We focus in this subsection on Theorem~\ref{thm:gencrit0_marches}: we only assume that $\poids(0), \poids(1) > 0$ and we suppose that $x_{n}/n \to 0$.
The next result completes Theorem~\ref{thm:gencrit0_marches} and  was used in the proof of Theorem~\ref{thm:gencrit0}.

\begin{prop}
\label{prop:ctegencritfewvertices}
The following convergence in probability holds:
\[\frac{1}{x_{n}} \sum_{k=1}^{n} (X^{n}_{k})^{2} \cvproba 1
.\]
\end{prop}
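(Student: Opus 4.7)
The plan is to mimic the strategy used for Proposition~\ref{prop:ctegeneric}: first establish the law of large numbers under the exponentially tilted \emph{unconditional} law, then transfer to the bridge via an absolute continuity argument based on the local limit estimate~\eqref{eq:LLT_S_varie_bis}.

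By Lemma~\ref{lem:tilting}, the path $S^n$ has the same law as the random walk $\widehat{S}^n$ with i.i.d.\ increments $\widehat{X}^n_k$ of generating function $G(b_n\cdot)/G(b_n)$, conditioned on $\widehat{S}^n_n = x_n$, where $b_n$ satisfies $\Psi(b_n) = x_n/n$. Recall from the proof of Theorem~\ref{thm:LTT_small_endpoint} that $b_n \sim (p(0)/p(1))(x_n/n)$, and from~\eqref{eq:moments} combined with~\eqref{eq:derivees_superieures_G} that
\[
s_n^2 := \E\bigl[(\widehat{X}^n)^2\bigr] \equi \frac{x_n}{n}
\qquad\text{and}\qquad
\E\bigl[(\widehat{X}^n)^4\bigr] = O\!\left(\frac{x_n}{n}\right),
\]
the latter because the dominant term in the fourth-moment formula is $6 b_n G'(b_n)/G(b_n) \sim 6 x_n/n$, all other terms being $O(b_n^i)$ with $i \ge 2$ and hence negligible compared to $x_n/n$.

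Setting $T_n = \sum_{k=1}^n (\widehat{X}^n_k)^2$, this gives $\E[T_n] = n s_n^2 \sim x_n$ and, by independence, $\Var(T_n) \le n\,\E[(\widehat{X}^n)^4] = O(x_n)$. Chebyshev's inequality then yields, for any $\epsilon>0$,
\[
\Pr{\bigl|T_n/x_n - 1\bigr| > \epsilon} = O\!\left(\frac{1}{\epsilon^2 x_n}\right) \cv 0
\]
since $x_n \to \infty$. Hence $T_n/x_n \to 1$ in probability under the \emph{unconditional} law of $\widehat{S}^n$.

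The final step transfers this to the bridge, exactly as in the proof of Proposition~\ref{prop:LLT_implique_convergence_loi_varie}. Split the walk at time $\lfloor n/2\rfloor$ and write any functional of the first half using the Markov property:
\[
\Es{F\bigl(\widehat{X}^n_1,\dots,\widehat{X}^n_{\lfloor n/2\rfloor}\bigr) \,\bigm|\, \widehat{S}^n_n = x_n}
= \Es{F\bigl(\widehat{X}^n_1,\dots,\widehat{X}^n_{\lfloor n/2\rfloor}\bigr)\,\frac{\varphi^n_{n-\lfloor n/2\rfloor}(x_n - \widehat{S}^n_{\lfloor n/2\rfloor})}{\varphi^n_n(x_n)}},
\]
where $\varphi^n_k(\cdot) = \Pr{\widehat{S}^n_k = \cdot}$. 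By~\eqref{eq:LLT_S_varie_bis} applied at $t=1/2$ and $t=1$, the density bound shows that $\varphi^n_{n-\lfloor n/2\rfloor}$ is uniformly $O(1/\sqrt{x_n})$ while $\varphi^n_n(x_n) \ge c/\sqrt{x_n}$ (evaluation at the mean), so the Radon--Nikodym ratio is bounded above by a universal constant. Therefore the convergence in probability of $(2/x_n)\sum_{k=1}^{\lfloor n/2\rfloor}(\widehat{X}^n_k)^2 \to 1$ transfers from the unconditional to the conditional law. Using time-reversal, i.e.\ that $(\widehat{S}^n_n - \widehat{S}^n_{n-i})_{0\le i\le n}$ under $\Pr{\cdot \mid \widehat{S}^n_n = x_n}$ is equal in law to $(\widehat{S}^n_i)_{0\le i\le n}$ under the same conditioning, the same holds for the contribution of the second half. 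Summing the two contributions gives $T_n/x_n \to 1$ in probability under the bridge law, which is the claim.

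The only technically delicate point is the fourth-moment estimate, but it reduces to a direct application of~\eqref{eq:moments} and~\eqref{eq:derivees_superieures_G} using $b_n \to 0$; everything else is routine.
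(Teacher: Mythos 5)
Your proof is correct and follows essentially the same route as the paper's: tilt to the walk $\widehat{S}^n$, compute $\E[(\widehat{X}^n)^2]\sim x_n/n$ and $\E[(\widehat{X}^n)^4]=O(x_n/n)$ from~\eqref{eq:moments} and~\eqref{eq:derivees_superieures_G}, deduce $L^2$ (hence probability) convergence of $x_n^{-1}\sum_k(\widehat{X}^n_k)^2$ under the unconditional law, and transfer to the bridge via the Radon--Nikodym bound from the local limit theorem plus time-reversal. The paper records the sharper asymptotic $\Var((\widehat{X}^n)^2)\sim 6x_n/n$, but your $O(x_n/n)$ bound is all that the argument requires.
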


\begin{proof}
Exactly as in the proof of Proposition~\ref{prop:ctegeneric},
it is sufficient to work under the unconditional law of the random walk $\widehat{S}^{n}$, and then conclude by absolute continuity and time-reversal. We therefore replace the $X^{n}_{k}$'s by a sequence $(\widehat{X}^{n}_{k})_{k \ge 1}$ of i.i.d. random variables with generating function $\widehat{G}^{n} = G(b_{n} \,\cdot) / G(b_{n})$, where $b_{n} \in (0, \rho)$ is such that $\Psi(b_{n}) = x_{n} / n$, so $b_{n} \to 0$.
We then derive from~\eqref{eq:moments} the mean and variance of $(\widehat{X}^{n}_{1})^{2}$, and further, by~\eqref{eq:derivees_superieures_G},
\[\E\big[(\widehat{X}^{n}_{1})^{2}\big] = \frac{b_{n}^{2} G^{(2)}(b_{n}) + b_{n} G'(b_{n})}{G(b_{n})}
= \frac{x_{n}}{n} \left(\frac{b_{n}^{2} G^{(2)}(b_{n})}{b_{n} G'(b_{n})} + 1\right)
\equi \frac{x_{n}}{n} 
,\]
and similarly
\[\E\big[(\widehat{X}^{n}_{1})^{4}\big] 
= \frac{x_{n}}{n} \left(\frac{b_{n}^{4} G^{(4)}(b_{n}) + 6 b_{n}^{3} G^{(3)}(b_{n}) - 11 b_{n}^{2} G^{(2)}(b_{n})}{b_{n} G'(b_{n})} + 6\right),\]
so
\[\Var\big((\widehat{X}^{n}_{1})^{2}\big) \equi 6 \frac{x_{n}}{n}.\]
Since $x_{n} \to \infty$, we infer that $\frac{1}{x_{n}} \sum_{k=1}^{n} (X^{n}_{k})^{2}$ converges to $1$ in $L^{2}$ under the unconditioned probability, which suffices to conclude.
\end{proof}

\subsection{The large endpoint regime}
\label{sec:LLT_non_gen_beaucoup_feuilles}

In this subsection, we focus on Theorem~\ref{thm:nongenneg_marches}: We now assume that $x_{n}/n \to \infty$, that $G$ is $\Delta$-analytic, and that there exist $c, \rho, \alpha>0$ such that $G(\rho-z) \sim c z^{-\alpha}$ as $z \to 0$ with $\mathrm{Re}(z)>0$.
The next result completes Theorem~\ref{thm:nongenneg_marches} and was used in the proof of Theorem~\ref{thm:nongenneg}.

\begin{prop}
\label{prop:ctenongenericnegativeindex}
The following convergence in probability holds:
\[\frac{n}{x_{n}^{2}} \sum_{k=1}^{n} (X^{n}_{k})^{2} \cvproba \frac{\alpha+1}{\alpha}
.\]
\end{prop}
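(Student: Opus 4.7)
My plan is to follow the strategy that already appears in Propositions~\ref{prop:ctegeneric} and~\ref{prop:ctegencritfewvertices}: reduce to the unconditional law of the tilted random walk, show an $L^{2}$ bound there, then transfer back to the bridge by absolute continuity and time-reversal. Precisely, by Lemma~\ref{lem:tilting} it suffices to prove the convergence for the increments $(\widehat{X}^{n}_{k})_{1 \le k \le n}$ of a random walk $\widehat{S}^{n}$ with step generating function $G(b_{n}\,\cdot)/G(b_{n})$, conditioned on $\widehat{S}^{n}_{n}=x_{n}$, where $b_{n}=\rho(1-\varepsilon_{n})$ satisfies $\Psi(b_{n})=x_{n}/n$.

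The core of the argument is the moment estimates. Writing $b_{n}=\rho(1-\varepsilon_{n})$ and recalling from~\eqref{eq:b_n_negative_index} that $\varepsilon_{n}\sim \alpha n/x_{n}$, the $\Delta$-analyticity estimate~\eqref{eq:Gderiv} and the explicit formulas~\eqref{eq:moments} will give
\[\E\big[(\widehat{X}^{n}_{1})^{2}\big] \sim \frac{\alpha(\alpha+1)}{\varepsilon_{n}^{2}} \sim \frac{\alpha+1}{\alpha}\cdot\frac{x_{n}^{2}}{n^{2}},\]
exactly as in~\eqref{eq:moments2-new}, and analogously, using the leading term $b_{n}^{4}G^{(4)}(b_{n})/G(b_{n})$,
\[\E\big[(\widehat{X}^{n}_{1})^{4}\big] \sim \frac{\alpha(\alpha+1)(\alpha+2)(\alpha+3)}{\varepsilon_{n}^{4}} = O\!\left(\frac{x_{n}^{4}}{n^{4}}\right),\]
so $\Var((\widehat{X}^{n}_{1})^{2}) = O(x_{n}^{4}/n^{4})$. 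Summing over $n$ independent copies,
\[\E\!\left[\frac{n}{x_{n}^{2}}\sum_{k=1}^{n}(\widehat{X}^{n}_{k})^{2}\right] \cv \frac{\alpha+1}{\alpha},
\qquad
\Var\!\left(\frac{n}{x_{n}^{2}}\sum_{k=1}^{n}(\widehat{X}^{n}_{k})^{2}\right) = \frac{n^{3}}{x_{n}^{4}}\cdot O\!\left(\frac{x_{n}^{4}}{n^{4}}\right) = O(1/n),\]
so the convergence holds in $L^{2}$, and a fortiori in probability, under the unconditional law.

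Finally, to pass to the bridge conditioning, I will split the sum at time $\lfloor n/2\rfloor$ and treat each half separately. The local limit estimate of Theorem~\ref{thm:LTT_large_endpoint}, together with the fact that $\scale_{n}^{-1}(\widehat{S}^{n}_{\lfloor n/2\rfloor}-x_{n}/2)$ is tight under the unconditional law (by~\eqref{eq:cv1b}), shows that the Radon--Nikodym density $\varphi^{n}_{n-\lfloor n/2\rfloor}(x_{n}-\widehat{S}^{n}_{\lfloor n/2\rfloor})/\varphi^{n}_{n}(x_{n})$ is tight, so any event concerning $(\widehat{X}^{n}_{k})_{k\le \lfloor n/2\rfloor}$ whose unconditional probability tends to zero also has conditional probability tending to zero; the second half is handled identically after the time-reversal $(\widehat{S}^{n}_{n}-\widehat{S}^{n}_{n-i})_{0\le i\le n}\eqloi (\widehat{S}^{n}_{i})_{0\le i\le n}$ under the conditioning. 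No step really is the main obstacle, the arguments of Propositions~\ref{prop:ctegeneric} and~\ref{prop:ctegencritfewvertices} transport essentially verbatim once the moment asymptotics are in place; the only mild care is to verify that the $\Delta$-analyticity transfers to derivatives as stated in~\eqref{eq:Gderiv} so that the fourth-moment bound is valid.
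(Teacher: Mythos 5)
Your proof is correct and follows essentially the same route as the paper's: reduce to the unconditioned tilted walk with step distribution $G(b_n\,\cdot)/G(b_n)$, use the explicit moment formulas~\eqref{eq:moments} together with the $\Delta$-analyticity asymptotics~\eqref{eq:Gderiv} to show $\E[(\widehat{X}^n_1)^2]\sim\frac{\alpha+1}{\alpha}\frac{x_n^2}{n^2}$ and $\Var((\widehat{X}^n_1)^2)=O(x_n^4/n^4)$, deduce $L^2$ convergence, then transfer to the bridge by absolute continuity on $[0,1/2]$ and time-reversal. The only cosmetic difference is that you spell out the fourth-moment constant and the transfer mechanism in slightly more detail than the paper, which simply points to the argument of Proposition~\ref{prop:ctegencritfewvertices}.
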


\begin{proof}
The proof is similar to that of Proposition~\ref{prop:ctegencritfewvertices}. First, 
it is sufficient to work under the unconditional law of the random walk $\widehat{S}^{n}$, and then conclude by absolute continuity and time-reversal that the same holds for the bridge, which has the same law as $S^{n}$. We therefore replace the $X^{n}_{k}$'s by a sequence $(\widehat{X}^{n}_{k})_{k \ge 1}$ of i.i.d. random variables with generating function $\widehat{G}^{n} = G(b_{n} \,\cdot) / G(b_{n})$.
Second, we rely on the moments of $(\widehat{X}^{n}_{1})^{2}$ calculated in~\eqref{eq:moments}.

Indeed, we already noticed in~\eqref{eq:moments2-new} that, as $n\to\infty$,
\[\E\big[(\widehat{X}^{n}_{1})^{2}\big] \sim \frac{\alpha+1}{\alpha} \frac{x_{n}^{2}}{n^{2}}.\]
Similarly, combining the expression of the moments from~\eqref{eq:moments} and the derivatives of $G$ from~\eqref{eq:Gderiv} we deduce that as $n\to\infty$,
\[\Var\big((\widehat{X}^{n}_{1})^{2}\big) 
= O\left(\frac{x_{n}^{4}}{n^{4}}\right)
.\]
Therefore $\frac{n}{x_{n}^{2}} \sum_{k=1}^{n} (X^{n}_{k})^{2}$ converges to $(\alpha+1)/\alpha$ in $L^{2}$ under the unconditioned probability, which suffices to conclude.
\end{proof}

\subsection{Stable regime with a large negative deviation}
\label{sec:regime_stable_moins_infini}

We treat in this subsection the regime described in Theorem~\ref{thm:stable_marches_gaussien}, where the limit process is again a Brownian bridge.
Recall the assumptions of this theorem: we assume throughout that $G'(\rho) < \infty$ and that there exist $\alpha \in (1,2]$ and a slowly varying function $L$ such that for every $0 \le  s <  \rho$,
\[G(s)=G(\rho) \left(1-m+\frac{m s}{\rho}+\left(1-\frac{s}{\rho}\right)^{\alpha} L\left(\frac{1}{1-s/\rho}\right)\right).\]
Recall that in the case $G^{(2)}(\rho) < \infty$, i.e.~when $\alpha=2$ and $L$ admits a finite limit at infinity, say $\ell$, the law with generating function $G(\rho \,\cdot)/G(\rho)$ has a finite variance, equal to
\[\sigma^{2} = 2\ell + m - m^{2}
.\]
In this case we set $r_{n} = \sqrt{n \sigma^{2}/2}$ for every $n\ge 1$, otherwise, we let $(r_{n})_{n \ge 1}$ be a sequence such that $r_{n}^{\alpha} \sim n L(r_{n})$ as $n \to \infty$.
Finally, we let
\[x_n = m n + \lambda_{n},
\qquad\text{where}\qquad
\lambda_{n}/r_{n} \to -\infty
\qquad\text{and}\qquad
 \lambda_{n}/n \to 0,\]
and we write $b_{n} = \rho(1-\varepsilon_{n})$, which satisfies $\Psi(b_{n}) = x_{n}/n$, so $\varepsilon_{n} \to 0$.

Again, we prove a law of large number for the square of the increments; this completes Theorem~\ref{thm:stable_marches_gaussien} and was used in the proof of Theorem~\ref{thm:nongenstable} in the case $\lambda = -\infty$.

\begin{prop}
\label{prop:cte_nongenstable_lambda_moins_infini}
The following convergence in probability holds:
\[\frac{\varepsilon_{n}}{|\lambda_{n}|} \sum_{k=1}^{n} (X^{n}_{k})^{2} \cvproba \alpha-1 + \frac{m^{2}}{\sigma^{2}} \ind{G^{(2)}(\rho)<\infty}.\]
\end{prop}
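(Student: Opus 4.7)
The plan is to follow the three-step strategy of the preceding propositions: (i) reduce to the unconditional law of the random walk $\widehat{S}^n$ with i.i.d. increments of generating function $G(b_n\,\cdot)/G(b_n)$ via Lemma~\ref{lem:tilting}, (ii) establish the law of large numbers under this unconditional law, and (iii) transfer back to the bridge by the absolute-continuity argument of Proposition~\ref{prop:LLT_implique_convergence_loi_varie} (using the local limit estimate from Theorem~\ref{thm:lls1}) together with time-reversal for the second half of the trajectory. Only step~(ii) requires new work.

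For the mean, I will use the explicit expression~\eqref{eq:moments} together with the derivative asymptotics of~\eqref{eq:derivees_G_distorted} (adapted to a general radius of convergence~$\rho$). In the infinite-variance case this gives $\E[(\widehat{X}^n_1)^2]\sim\alpha(\alpha-1)\varepsilon_n^{\alpha-2}L(1/\varepsilon_n)$, and combined with the relation $|\lambda_n|/(n\varepsilon_n)\sim\alpha\varepsilon_n^{\alpha-2}L(1/\varepsilon_n)$ of~\eqref{eq:equiv_scaling_distorted-new}, it yields $\frac{n\varepsilon_n}{|\lambda_n|}\E[(\widehat{X}^n_1)^2]\to\alpha-1$. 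In the finite-variance case, $b_n\to\rho$ gives $\E[(\widehat{X}^n_1)^2]\to\sigma^2+m^2$ by continuity, whilst~\eqref{eq:varfini} gives $|\lambda_n|/(n\varepsilon_n)\to\sigma^2$, so $\frac{n\varepsilon_n}{|\lambda_n|}\E[(\widehat{X}^n_1)^2]\to 1+m^2/\sigma^2=(\alpha-1)+m^2/\sigma^2$ since $\alpha=2$. In either regime, the expectation of the rescaled sum tends to the claimed limit.

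In the infinite-variance regime, $L^2$ control is straightforward: the fourth-moment formula in~\eqref{eq:moments}, together with~\eqref{eq:derivees_G_distorted}, yields $\E[(\widehat{X}^n_1)^4]=O(\varepsilon_n^{\alpha-4}L(1/\varepsilon_n))$, whence
\[
\Var\left(\frac{\varepsilon_n}{|\lambda_n|}\sum_{k=1}^n(\widehat{X}^n_k)^2\right)\le\frac{\varepsilon_n^2 n}{\lambda_n^2}\,\E[(\widehat{X}^n_1)^4]=O\!\left(\frac{1}{|\lambda_n|\varepsilon_n}\right),
\]
which tends to $0$ because $|\lambda_n|\varepsilon_n=(|\lambda_n|/r_n)(r_n\varepsilon_n)\to\infty$: the first factor by hypothesis and the second by the estimate $r_n\varepsilon_n\to\infty$ already established in the proof of Theorem~\ref{thm:lls1}. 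This gives $L^2$ convergence under the unconditional law.

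The main obstacle is the finite-variance case, where $G^{(3)}(\rho)$ and $G^{(4)}(\rho)$ need not be finite (the assumption $G^{(2)}(\rho)<\infty$ only controls the second derivative), so a direct variance bound fails. I will handle it by a truncation argument in the spirit of Proposition~\ref{prop:ctegeneric}: for fixed $\delta>0$, split $(\widehat{X}^n_k)^2=Y_k^{n,\le}+Y_k^{n,>}$ according to whether $(\widehat{X}^n_k)^2\le\delta n$ or not. The variance of $Y_k^{n,\le}$ is bounded by $\delta n\,\E[(\widehat{X}^n_1)^2]=O(\delta n)$, and since $n\varepsilon_n/|\lambda_n|\to 1/\sigma^2$, the variance of $\frac{\varepsilon_n}{|\lambda_n|}\sum_k Y_k^{n,\le}$ is $O(\delta)$. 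For the tail, the weak convergence of $(\widehat{X}^n_k)^2$ to $(\widehat{X}^\rho_1)^2$ together with $\E[(\widehat{X}^n_k)^2]\to\sigma^2+m^2$ gives uniform integrability of $\{(\widehat{X}^n_k)^2:n\ge1\}$, hence $\E[Y_k^{n,>}]\to 0$; after rescaling, $\E\!\left[\frac{\varepsilon_n}{|\lambda_n|}\sum_k Y_k^{n,>}\right]\to 0$, so this contribution vanishes in probability by Markov's inequality. Letting $\delta\downarrow 0$ completes the argument.
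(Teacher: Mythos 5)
Your proposal is correct and follows the same overall plan as the paper: reduce to the unconditional random walk via Lemma~\ref{lem:tilting}, prove a law of large numbers there, and transfer back to the bridge by absolute continuity and time-reversal. The mean computations and the $L^{2}$ argument in the infinite-variance case are identical to the paper's. The one place where you diverge is the finite-variance truncated tail. The paper handles it by a direct Radon--Nikodym domination: since $b_{n}\le\rho$, one has $\P(\widehat{X}^{n}_{1}=k)=\frac{G(\rho)}{G(b_{n})}(b_{n}/\rho)^{k}\P(\widehat{X}=k)\le\frac{G(\rho)}{G(b_{n})}\P(\widehat{X}=k)$, so every truncated expectation of $\widehat{X}^{n}_{1}$ is dominated by the corresponding expectation for the fixed limit variable $\widehat{X}$, and the latter vanishes by dominated convergence. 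You instead invoke uniform integrability of $\{(\widehat{X}^{n}_{1})^{2}\}_{n}$, deduced from weak convergence to $\widehat{X}^{2}$ together with convergence of first moments (Vitali/Scheff\'e). Both are valid; the paper's domination is slightly more elementary and exploits the monotonicity $b_{n}\uparrow\rho$ directly, whereas yours is a bit more robust (it would survive without the one-sided inequality $b_{n}\le\rho$). Either route closes the gap left by the absence of a fourth-moment bound.
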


\begin{proof}
Again, we proceed exactly as in the proof of Proposition~\ref{prop:ctegencritfewvertices}: First, 
it is sufficient to work under the unconditional law of the random walk $\widehat{S}^{n}$, and then conclude by absolute continuity and time-reversal that the same holds for the bridge, which has the same law as $S^{n}$. We therefore replace the $X^{n}_{k}$'s by a sequence $(\widehat{X}^{n}_{k})_{k \ge 1}$ of i.i.d. copies of a random variable $\widehat{X}^{n}$ with generating function $\widehat{G}^{n} = G(b_{n} \,\cdot) / G(b_{n})$.
Second, we rely on the moments of $(\widehat{X}^{n})^{2}$ calculated in~\eqref{eq:moments}.
Note that the generating function $\widehat{G}^{n}$ satisfies the same assumption that $G$ in Sec.~\ref{sec:LLT_stable} and $\widehat{X}^{n}$ has the law of $\xi^{(b_{n})}$ there.

Let us first treat the case where either $\alpha < 2$ or $L$ converges to $\infty$. In this case, combining~\eqref{eq:equiv_scaling_distorted-new} and~\eqref{eq:moment_3_scaling_distorted-new} we obtain
\[\frac{n \varepsilon_{n}}{|\lambda_{n}|} \E\big[(\widehat{X}^{n}_{1})^{2}\big]
\cv \alpha-1.\]
By the same argument, relying on the expression of the moments from~\eqref{eq:moments} and the derivatives of $G$ from~\eqref{eq:derivees_G_distorted} we infer that as $n\to\infty$,
\[\frac{n \varepsilon_{n}}{|\lambda_{n}|} \E\big[(\widehat{X}^{n}_{1})^{4}\big] 
\sim \frac{n \varepsilon_{n}}{|\lambda_{n}|} \frac{b_{n}^{4} G^{(4)}(b_{n})}{G(b_{n})}
\sim \frac{(\alpha-1)(\alpha-2)(\alpha-3)}{\varepsilon_{n}^{2}}
.\]
Recall also that we deduced just below~\eqref{eq:equiv_scaling_distorted-new} that $r_{n} \varepsilon_{n} \to \infty$, in addition to $|\lambda_{n}| \to \infty$, hence
\[n \left(\frac{\varepsilon_{n}}{|\lambda_{n}|}\right)^{2} \E\big[(\widehat{X}^{n}_{1})^{4}\big] \cv 0.\]
We conclude in this case that $\frac{\varepsilon_{n}}{|\lambda_{n}|} \sum_{k=1}^{n} (\widehat{X}^{n}_{k})^{2}$ converges to $\alpha-1$ in $L^{2}$ under the unconditioned probability, which suffices for our claim.

It remains to treat the case where $\alpha=2$ and $L$ admits a finite and positive limit, say $\ell$. Here we use~\eqref{eq:varfini} instead.
Moreover in this case $G^{(2)}(\rho) < \infty$; let $\widehat{X}$ have the law with generating function $G(\rho\,\cdot)/G(\rho)$, so $\E[\widehat{X}] = m$ and let $s^{2} = \E[(\widehat{X})^{2}]$, which is given by~\eqref{eq:moments}, and notice that since $2\ell = \E[\widehat{X}(\widehat{X}-1)]$, then $s^{2} = 2\ell + m$.
Similarly let $s^{2}_{n} = \E[(\widehat{X}^{n}_{1})^{2}]$, then by continuity $s^{2}_{n} \to s^{2}$. 
The claim can be proved very much as in the proof of Proposition~\ref{prop:ctegeneric}. Recall from there that all we need to prove is that for any $\varepsilon > 0$,
\[\E\Big[\big|(\widehat{X}^{n}_{1})^{2} - s_{n}^{2}\big| \ind{|(\widehat{X}^{n}_{1})^{2} - s_{n}^{2}| > n \varepsilon^{3}}\Big] \cv 0.\]
However here we cannot rely on a uniform bound for the fourth moment, and we argue instead by absolute continuity.
Indeed, for every $k \ge0$, we have
\[\P(\widehat{X} = k) = \frac{\rho^{k} \poids(k)}{G(\rho)}
\qquad\text{and}\qquad
\P(\widehat{X}^{n}_{1} = k) = \frac{b_{n}^{k} \poids(k)}{G(b_{n})}.\]
Consequently, since $b_{n} \le \rho$ for every $n$ and $G(b_{n}) \to G(\rho)$, then
\begin{align*}
\E\Big[\big|(\widehat{X}^{n}_{1})^{2} - s_{n}^{2}\big| \ind{|(\widehat{X}^{n}_{1})^{2} - s_{n}^{2}| > n \varepsilon^{3}}\Big]
&= \frac{G(\rho)}{G(b_{n})} \E\bigg[\big|(\widehat{X})^{2} - s_{n}^{2}\big| \ind{|(\widehat{X})^{2} - s_{n}^{2}| > n \varepsilon^{3}} \left(\frac{b_{n}}{\rho}\right)^{k}\bigg]
\\
&\le \frac{G(\rho)}{G(b_{n})} \E\Big[\big|(\widehat{X})^{2} - s_{n}^{2}\big| \ind{|(\widehat{X})^{2} - s_{n}^{2}| > n \varepsilon^{3}}\Big],
\end{align*}
which tends to $0$. We conclude as in the proof of Proposition~\ref{prop:ctegeneric} that $\frac{1}{n} \sum_{k=1}^{n} (\widehat{X}^{n}_{k})^{2}$ converges to $s^{2}$ in probability. 
Our claim then follows from~\eqref{eq:varfini} together with the fact that the constant there equals $s^{2} - m^{2}$.
\end{proof}

Notice that when $L$ has a finite limit at infinity, say $\ell$, then $|\lambda_{n}| / \varepsilon_{n} \sim (2\ell + m - m^{2}) n = (s^{2} - m^{2}) n$ by~\eqref{eq:varfini} 
so $n^{-1} \sum_{k=1}^{n} (X^{n}_{k})^{2}$ converges in probability to $s^{2} = \E[\widehat{X}^{2}]$, which is consistent with Proposition~\ref{prop:ctegeneric}.

\subsection{Gaussian regime with a small deviation}
\label{sec:regime_stable_drift}

We finally treat in this subsection the regime described in Theorem~\ref{thm:stable_marches_drift}, in the case $\alpha=2$, and one last time, we prove a law of large number for the square of the increments; this was used in the proof of Theorem~\ref{thm:nongenstable_drift} in the case $\alpha=2$.
We work under the same assumptions on $G$ as in the previous subsection with $\alpha=2$, that is
\[G(s)=G(\rho) \bigg(1-m+\frac{m s}{\rho}+\left(1-\frac{s}{\rho}\right)^{2} L\left(\frac{1}{1-s/\rho}\right)\bigg),\]
with $r_{n}$ defined as in Sec.~\ref{sec:regime_stable_moins_infini}.
However here we assume that there exists $\lambda \in \R$ such that
\[x_n = m n + \lambda_{n},
\qquad\text{where}\qquad
\lambda_{n}/r_{n} \to \lambda.\]
As opposed to the preceding regimes, we shall use a fixed distribution for the equivalent random walk. Precisely, we let $\widehat{X}$ have the law with generating function $\widehat{G} = G(\rho \,\cdot)/G(\rho)$, so its moments are given by~\eqref{eq:moments} with $b = \rho$. We let $\widehat{S}$ denote a random walk with such i.i.d. steps, so Lemma~\ref{lem:tilting} shows that the path $(\widehat{S}_{i} ; 0 \le i \ge n)$ under $\P(\,\cdot\mid \widehat{S}_{n} = x_{n})$ has the same law as our original path $S^{n}$.

\begin{prop}
\label{prop:ctealpha2}
When $\alpha=2$, the following convergence in probability holds:
\[r_{n}^{-2} \sum_{k=1}^{n} (X^{n}_{k})^{2} \cvproba 
2 + \frac{2m^{2}}{\sigma^{2}} \ind{G^{(2)}(\rho) < \infty}
.\]
\end{prop}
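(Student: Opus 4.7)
My plan is to follow the same strategy as the preceding propositions in this appendix. Introduce $\widehat{X}$ with generating function $G(\rho\,\cdot)/G(\rho)$ and the random walk $\widehat{S}$ with i.i.d.\ copies of $\widehat{X}$; by Lemma~\ref{lem:tilting}, $S^n$ has the same law as $(\widehat{S}_i ; 0 \le i \le n)$ under $\P(\,\cdot \mid \widehat{S}_n = x_n)$, and~\eqref{eq:moments} with $b = \rho$ gives $\E[\widehat{X}] = m$ together with $\E[\widehat{X}^2] = 2\ell + m = \sigma^2 + m^2$ when $G^{(2)}(\rho) < \infty$. I would first establish the limit under the unconditional law of $\widehat{S}$ and then transfer it to the bridge conditioning by absolute continuity, exactly as in Propositions~\ref{prop:ctegeneric} through~\ref{prop:cte_nongenstable_lambda_moins_infini}.

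For the unconditional convergence I would split into the two subcases. When $G^{(2)}(\rho) < \infty$, the weak law of large numbers immediately yields $n^{-1}\sum_{k=1}^n \widehat{X}_k^2 \to \sigma^2 + m^2$ in probability, and since $r_n^2 = n\sigma^2/2$ this gives $r_n^{-2} \sum \widehat{X}_k^2 \to 2 + 2m^2/\sigma^2$. When $G^{(2)}(\rho) = \infty$, $\widehat{X}$ has infinite variance but still lies in the Gaussian domain of attraction, with truncated variance $\Var(\widehat{X}\ind{\widehat{X} \le x}) \sim 2 L(x)$ as $x \to \infty$ (this is the relation $L_2 \sim 2 L$ for $\alpha = 2$ coming from the formula recalled right before~\eqref{eq:LLT_stable_classique}). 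Combined with $r_n^2 \sim n L(r_n)$, a standard truncation argument should then yield $r_n^{-2} \sum_{k=1}^n (\widehat{X}_k - m)^2 \to 2$ in probability: the truncated contribution has mean asymptotic to $2nL(r_n)/r_n^2 \to 2$ and negligible variance (computed via Karamata's theorem), while the untruncated contribution is empty with probability tending to one since $n\P(|\widehat{X}-m| > r_n) \to 0$. Expanding $\widehat{X}_k^2 = (\widehat{X}_k-m)^2 + 2m(\widehat{X}_k-m) + m^2$, the cross term is $O_P(r_n^{-1}) = o_P(1)$ because $r_n^{-1}\sum(\widehat{X}_k-m)$ is tight, and the deterministic remainder $nm^2/r_n^2 = m^2/L(r_n) \to 0$. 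Hence $r_n^{-2}\sum \widehat{X}_k^2 \to 2$ in probability, matching the claimed constant.

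Finally, I would transfer this unconditional limit to the bridge by the absolute continuity argument used repeatedly in this appendix: fix $u \in (0, 1/2]$ and, on the initial segment $[0, \floor{nu}]$, bound the density of the bridge with respect to the unconditioned walk by a uniform constant via the local limit theorem~\eqref{eq:LLT_stable_classique}, using that the Gaussian density $d^2_{1-u}$ is bounded on all of $\R$. Applied to the event that $r_n^{-2} \sum_{k=1}^{\floor{nu}} \widehat{X}_k^2$ deviates from $uC$ by more than $\varepsilon$, which has vanishing unconditional probability, this yields the conditional convergence on the first half of the bridge; time-reversibility of the bridge, which preserves the law of the increments, provides the same statement on the second half, and summing the two halves with $u = 1/2$ concludes. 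The main delicate step I anticipate is the quadratic variation limit in the infinite variance regime, which, in contrast to the finite variance case, does not follow from a direct second moment argument and instead requires the truncation analysis tied to the defining scaling $r_n^2 \sim n L(r_n)$ and the explicit relation $L_2 \sim 2L$ for $\alpha = 2$.
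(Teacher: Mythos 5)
Your proof is correct and takes essentially the same approach as the paper: reduce to the unconditional walk $\widehat{S}$ with step law $G(\rho\,\cdot)/G(\rho)$, treat the finite-variance case by the weak law of large numbers, handle the infinite-variance case by a truncation argument keyed to $r_n^2 \sim n L(r_n)$ and the $\alpha=2$ relation between $L$ and the truncated second moment, and transfer to the bridge by absolute continuity plus time-reversal. The only cosmetic difference is that you center the increments at $m$ and then expand the square, whereas the paper works directly with the uncentered truncated second moment $L_1(x)=\E[\widehat{X}^2\ind{|\widehat{X}|\le x}]$ and its ratio to $L$, but since $L(x)\to\infty$ in that regime the two formulations agree asymptotically.
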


\begin{proof}
As now usual, 
it is sufficient to work under the unconditional law of the random walk $\widehat{S}$, and then conclude by absolute continuity and time-reversal that the same holds for the bridge, which has the same law as $S^{n}$. We therefore replace the $X^{n}_{k}$'s by a sequence $(\widehat{X}_{k})_{k \ge 1}$ of i.i.d. random variables with generating function $\widehat{G}$.
The latter belong to the domain of attraction of a Gaussian distribution;
in the case where $L$ has a finite limit, we have $\widehat{G}^{(2)}(\rho) < \infty$ so the $(\widehat{X}_{k})^{2}$'s have finite mean and the claim simply follows from the law of large numbers combined with the fact that $r_{n} = \sqrt{n \sigma^{2}/2}$ :
\[n^{-1} \sum_{k=1}^{n} (X^{n}_{k})^{2}\cvproba  \E[\widehat{X}^{2}]=\sigma^{2}+m^{2}\]
In particular, the convergence of Proposition~\ref{prop:ctegeneric} also holds for $b=\rho$ when $G^{(2)}(\rho) < \infty$.

This can be extended to the case  $\widehat{G}^{(2)}(\rho) =\infty$ by looking at the first two truncated moments, as in e.g.~\cite[Theorem~8(i)]{Mar19}. We reproduce the argument for the reader's convenience. Let $L_{1}$ be the slowly varying function defined by $L_{1}(x) = \E[\widehat{X}_{1}^{2} \ind{|\widehat{X}_{1}| \le x}]$ for every $x > 0$.
As in e.g.~\cite[Lemma~4.7]{BS15} we have $(L_{1}(x)-m) \sim 2 L(x)$ as $x \to \infty$. Further, by~\cite[Chapter~XVII, Equation~5.16]{Fel71}, the ratio $x^2 \P(\widehat{X}_{1} \ge x) / L_{1}(x)$ converges to $(2-\alpha)/2 = 0$ as $x \to \infty$.
Then for every $\varepsilon > 0$, since $L_1$ is slowly varying, then $L_1(\varepsilon r_n) \sim L_1(r_n)$ and so
\[\Pr{r_n^{-1} \max_{1 \le i \le n} \widehat{X}_i \ge \varepsilon}
\le n \Pr{\overline{X}_{1} \ge \varepsilon r_n}
= o\left(n (\varepsilon r_n)^{-2} L_{1}(\varepsilon r_n)\right)
= o\left(n r_n^{-2} L_{1}(r_n)\right)
= o(1),\]
where we used that $n r_n^{-2} \sim L(r_n)^{-1}$ and that the ratio $L_{1}(r_n)/L(r_n)$ is bounded.
Further,
\[\E\bigg[r_{n}^{-2} \sum_{1 \le i \le n} \widehat{X}_{i}^2 \ind{|\widehat{X}_{i}| \le \varepsilon r_{n}}\bigg]
= n r_{n}^{-2} \Es{\widehat{X}_{1}^2 \ind{|\widehat{X}_{1}| \le \varepsilon r_{n}}}
\sim n r_{n}^{-2} L_{1}(r_{n})
\sim \frac{L_{1}(r_{n})}{L(r_{n})},\]
and, similarly, the variance of this quantity equals
\[n r_{n}^{-4} \Var\left(\widehat{X}_{1}^2 \ind{|\widehat{X}_{1}| \le \varepsilon r_{n}}\right)
\le \varepsilon^2 n r_{n}^{-2} \Es{\widehat{X}_{1}^2 \ind{|\widehat{X}_{1}| \le \varepsilon r_{n}}}
\sim \varepsilon^2 \frac{L_{1}(r_{n})}{L(r_{n})}.\]
Since we have previously shown that with high probability, $|\widehat{X}_{i}| \le \varepsilon r_n$ for every $i \le n$, then we conclude that $r_n^{-2} \sum_{1 \le i \le n} \widehat{X}_{i}^2$ converges in probability to the limit of the ratio $L_{1}/L$, which is $2$. The completes the proof.
\end{proof}
\end{appendix}

\addcontentsline{toc}{section}{Bibliography}


\begin{thebibliography}{BDFG04}

\bibitem[AA19]{ABA19}
Louigi {Addario-Berry} and Marie {Albenque}.
\newblock Convergence of odd-angulations via symmetrization of labeled trees.
\newblock {\em To appear in} Ann. H. Lebesgue. {\em Preprint available at}
  \href{http://arxiv.org/abs/1904.04786}{\tt arXiv:1904.04786}, 2019.

\bibitem[Abr16]{Abr16}
C{\'e}line Abraham.
\newblock Rescaled bipartite planar maps converge to the {B}rownian map.
\newblock {\em Ann. Inst. H. Poincar{\'e} Probab. Statist.}, 52(2):575--595,
  2016.

\bibitem[AL11]{AL11}
In\'{e}s Armend\'{a}riz and Michail Loulakis.
\newblock Conditional distribution of heavy tailed random variables on large
  deviations of their sum.
\newblock {\em Stochastic Process. Appl.}, 121(5):1138--1147, 2011.

\bibitem[Ald85]{Aldous:Saint_Flour}
David~J. Aldous.
\newblock Exchangeability and related topics.
\newblock In {\em \'{E}cole d'\'et\'e de probabilit\'es de {S}aint-{F}lour,
  {XIII}---1983}, volume 1117 of {\em Lecture Notes in Math.}, pages 1--198.
  Springer, Berlin, 1985.

\bibitem[Ald93]{Ald93}
David Aldous.
\newblock The continuum random tree. {III}.
\newblock {\em Ann. Probab.}, 21(1):248--289, 1993.

\bibitem[AMB16]{AMB16}
Jan Ambj{\o}rn, Yuri Makeenko, and Timothy Budd.
\newblock Generalized multicritical one-matrix models.
\newblock {\em Nuclear Phys. B}, 913:357--380, 2016.

\bibitem[BB08]{BB08}
A.~A. Borovkov and K.~A. Borovkov.
\newblock {\em Asymptotic analysis of random walks}, volume 118 of {\em
  Encyclopedia of Mathematics and its Applications}.
\newblock Cambridge University Press, Cambridge, 2008.
\newblock Heavy-tailed distributions, Translated from the Russian by O. B.
  Borovkova.

\bibitem[BDFG04]{BDG04}
J{\'e}r{\'e}mie Bouttier, Philippe Di~Francesco, and Emmanuel Guitter.
\newblock Planar maps as labeled mobiles.
\newblock {\em Electron. J. Combin.}, 11(1):Research Paper 69, 27, 2004.

\bibitem[Ber96]{Ber96}
Jean Bertoin.
\newblock {\em L\'evy processes}, volume 121 of {\em Cambridge Tracts in
  Mathematics}.
\newblock Cambridge University Press, Cambridge, 1996.

\bibitem[BJM14]{BJM14}
J{\'e}r{\'e}mie Bettinelli, Emmanuel Jacob, and Gr{\'e}gory Miermont.
\newblock The scaling limit of uniform random plane maps, {\it via} the
  {A}mbj\o rn-{B}udd bijection.
\newblock {\em Electron. J. Probab.}, 19:no. 74, 16, 2014.

\bibitem[BRR60]{BR60}
R.~R. Bahadur and R.~Ranga~Rao.
\newblock On deviations of the sample mean.
\newblock {\em Ann. Math. Statist.}, 31:1015--1027, 1960.

\bibitem[BS15]{BS15}
Jakob~E. Bj\"ornberg and Sigurdur~\"Orn Stef\'ansson.
\newblock Random walk on random infinite looptrees.
\newblock {\em J. Stat. Phys.}, 158(6):1234--1261, 2015.

\bibitem[CC13]{CC13}
Francesco Caravenna and Lo\"{\i}c Chaumont.
\newblock An invariance principle for random walk bridges conditioned to stay
  positive.
\newblock {\em Electron. J. Probab.}, 18:no. 60, 32, 2013.

\bibitem[Cra38]{Cra38}
Harald Cram{\'e}r.
\newblock Sur un nouveau th{\'e}oreme-limite de la th{\'e}orie des
  probabilit{\'e}s.
\newblock {\em Actual. Sci. Ind.}, 736:5--23, 1938.

\bibitem[DDS08]{DDS08}
D.~Denisov, A.~B. Dieker, and V.~Shneer.
\newblock Large deviations for random walks under subexponentiality: the
  big-jump domain.
\newblock {\em Ann. Probab.}, 36(5):1946--1991, 2008.

\bibitem[DM95]{DM95}
Burgess Davis and David McDonald.
\newblock An elementary proof of the local central limit theorem.
\newblock {\em J. Theoret. Probab.}, 8(3):693--701, 1995.

\bibitem[Duq03]{Duq03}
Thomas Duquesne.
\newblock A limit theorem for the contour process of conditioned
  {Galton--Watson} trees.
\newblock {\em Ann. Probab.}, 31(2):996--1027, 2003.

\bibitem[Dur10]{Dur10}
Rick Durrett.
\newblock {\em Probability: theory and examples}, volume~31 of {\em Cambridge
  Series in Statistical and Probabilistic Mathematics}.
\newblock Cambridge University Press, Cambridge, fourth edition, 2010.

\bibitem[Fel71]{Fel71}
William Feller.
\newblock {\em An introduction to probability theory and its applications.
  {V}ol. {II}.}
\newblock Second edition. John Wiley \& Sons, Inc., New York-London-Sydney,
  1971.

\bibitem[FG14]{FG14}
\'{E}ric Fusy and Emmanuel Guitter.
\newblock The three-point function of general planar maps.
\newblock {\em J. Stat. Mech. Theory Exp.}, 2014(9):39, 2014.

\bibitem[FS09]{FS09}
Philippe Flajolet and Robert Sedgewick.
\newblock {\em Analytic combinatorics}.
\newblock Cambridge University Press, Cambridge, 2009.

\bibitem[Gut13]{Gut13}
Allan Gut.
\newblock {\em {Probability: A Graduate Course}}.
\newblock Springer Texts in Statistics. Springer, New York, second edition,
  2013.

\bibitem[H\"79]{Hog79}
Thomas H\"{o}glund.
\newblock A unified formulation of the central limit theorem for small and
  large deviations from the mean.
\newblock {\em Z. Wahrsch. Verw. Gebiete}, 49(1):105--117, 1979.

\bibitem[IL71]{IL71}
I.~A. Ibragimov and Yu.~V. Linnik.
\newblock {\em Independent and stationary sequences of random variables}.
\newblock Wolters-Noordhoff Publishing, Groningen, 1971.
\newblock With a supplementary chapter by I. A. Ibragimov and V. V. Petrov,
  Translation from the Russian edited by J. F. C. Kingman.

\bibitem[Jan12]{Jan12}
Svante Janson.
\newblock Simply generated trees, conditioned {G}alton--{W}atson trees, random
  allocations and condensation.
\newblock {\em Probab. Surv.}, 9:103--252, 2012.

\bibitem[JP87]{JP87}
Naresh~C. Jain and William~E. Pruitt.
\newblock Lower tail probability estimates for subordinators and nondecreasing
  random walks.
\newblock {\em Ann. Probab.}, 15(1):75--101, 1987.

\bibitem[JS03]{JS03}
Jean Jacod and Albert~N. Shiryaev.
\newblock {\em Limit theorems for stochastic processes}, volume 288 of {\em
  Grundlehren der Mathematischen Wissenschaften [Fundamental Principles of
  Mathematical Sciences]}.
\newblock Springer-Verlag, Berlin, second edition, 2003.

\bibitem[JS15]{JS15}
Svante Janson and Sigur{\dh}ur~{\"O}rn Stef{\'a}nsson.
\newblock Scaling limits of random planar maps with a unique large face.
\newblock {\em Ann. Probab.}, 43(3):1045--1081, 2015.

\bibitem[Kal02]{Kal02}
Olav Kallenberg.
\newblock {\em Foundations of {M}odern {P}robability}.
\newblock Probability and its Applications (New York). Springer-Verlag, New
  York, second edition, 2002.

\bibitem[KM]{dimensions}
Igor Kortchemski and Cyril Marzouk.
\newblock Fractal dimensions of random {L\'e}vy looptres and {L\'e}vy maps.
\newblock \emph{In preparation}.

\bibitem[{Kni}96]{Kni96}
F.~B. {Knight}.
\newblock {The uniform law for exchangeable and L\'evy process bridges}.
\newblock In {\em {Hommage \`a P. A. Meyer et J. Neveu}}, pages 171--187.
  Paris: Soci\'et\'e Math\'ematique de France, 1996.

\bibitem[Kor12]{Kor12}
Igor Kortchemski.
\newblock Invariance principles for {G}alton-{W}atson trees conditioned on the
  number of leaves.
\newblock {\em Stoch. Proc. Appl.}, 122:3126--3172, 2012.

\bibitem[Kor17]{Kor17}
Igor Kortchemski.
\newblock Sub-exponential tail bounds for conditioned stable
  {B}ienaym\'e-{G}alton-{W}atson trees.
\newblock {\em Probab. Theory Related Fields}, 168(1-2):1--40, 2017.

\bibitem[KR19]{KR19}
Igor Kortchemski and Lo{\"\i}c Richier.
\newblock Condensation in critical {C}auchy {Bienaym{\'e}--Galton--Watson}
  trees.
\newblock {\em Ann. Appl. Probab.}, 29(3):1837--1877, 2019.

\bibitem[LG05]{LG05}
Jean-Fran{\c{c}}ois Le~Gall.
\newblock Random trees and applications.
\newblock {\em Probab. Surv.}, 2:245--311, 2005.

\bibitem[LG07]{LG07}
Jean-Fran{\c c}ois Le~Gall.
\newblock The topological structure of scaling limits of large planar maps.
\newblock {\em Inventiones mathematicae}, 169(3):621--670, 2007.

\bibitem[LG10]{LG10}
Jean-Fran\c{c}ois Le~Gall.
\newblock It\^{o}'s excursion theory and random trees.
\newblock {\em Stochastic Process. Appl.}, 120(5):721--749, 2010.

\bibitem[LG13]{LG13}
Jean-Fran{\c c}ois Le~Gall.
\newblock Uniqueness and universality of the {B}rownian map.
\newblock {\em Ann. Probab.}, 41(4):2880--2960, 2013.

\bibitem[LGM11]{LGM11}
Jean-Fran{\c c}ois Le~Gall and Gr{\'e}gory Miermont.
\newblock Scaling limits of random planar maps with large faces.
\newblock {\em Ann. Probab.}, 39(1):1--69, 2011.

\bibitem[LGP08]{LGP08}
Jean-Fran{\c{c}}ois Le~Gall and Fr{\'e}d{\'e}ric Paulin.
\newblock Scaling limits of bipartite planar maps are homeomorphic to the
  2-sphere.
\newblock {\em Geom. Funct. Anal.}, 18(3):893--918, 2008.

\bibitem[Lig68]{Lig68}
Thomas~M. Liggett.
\newblock An invariance principle for conditioned sums of independent random
  variables.
\newblock {\em J. Math. Mech.}, 18:559--570, 1968.

\bibitem[LM07]{LM07}
Jean-Maxime Labarbe and Jean-Fran\c{c}ois Marckert.
\newblock Asymptotics of {B}ernoulli random walks, bridges, excursions and
  meanders with a given number of peaks.
\newblock {\em Electron. J. Probab.}, 12:no. 9, 229--261, 2007.

\bibitem[Mar18a]{Mar18a}
Cyril Marzouk.
\newblock On scaling limits of planar maps with stable face-degrees.
\newblock {\em ALEA Lat. Am. J. Probab. Math. Stat.}, 15:1089--1122, 2018.

\bibitem[Mar18b]{Mar18b}
Cyril Marzouk.
\newblock {Scaling limits of random bipartite planar maps with a prescribed
  degree sequence}.
\newblock {\em Random Struct. Alg.}, 53(3):448--503, 2018.

\bibitem[Mar19]{Mar19}
Cyril Marzouk.
\newblock Brownian limits of planar maps with a prescribed degree sequence.
\newblock {\em Preprint available at}
  \href{http://arxiv.org/abs/1903.06138}{\tt arXiv:1903.06138}, 2019.

\bibitem[Mie01]{Mie01}
Gr\'egory Miermont.
\newblock Ordered additive coalescent and fragmentations associated to {L}evy
  processes with no positive jumps.
\newblock {\em Electron. J. Probab.}, 6:no.\ 14, 33, 2001.

\bibitem[Mie08]{Mie08}
Gr{\'e}gory Miermont.
\newblock On the sphericity of scaling limits of random planar
  quadrangulations.
\newblock {\em Electron. Commun. Probab.}, 13:248--257, 2008.

\bibitem[Mie09]{Mie09}
Gr{\'e}gory Miermont.
\newblock Tessellations of random maps of arbitrary genus.
\newblock {\em Ann. Sci. \'Ec. Norm. Sup\'er. (4)}, 42(5):725--781, 2009.

\bibitem[Mie13]{Mie13}
Gr{\'e}gory Miermont.
\newblock The {B}rownian map is the scaling limit of uniform random plane
  quadrangulations.
\newblock {\em Acta Math.}, 210(2):319--401, 2013.

\bibitem[MM07]{MM07}
Jean-Fran{\c{c}}ois Marckert and Gr{\'e}gory Miermont.
\newblock Invariance principles for random bipartite planar maps.
\newblock {\em Ann. Probab.}, 35(5):1642--1705, 2007.

\bibitem[Pet65]{Pet65}
V.~V. Petrov.
\newblock On the probabilities of large deviations for sums of independent
  random variables.
\newblock {\em Teor. Verojatnost. i Primenen}, 10:310--322, 1965.

\bibitem[Pit06]{Pit06}
Jim Pitman.
\newblock {\em Combinatorial stochastic processes}, volume 1875 of {\em Lecture
  Notes in Mathematics}.
\newblock Springer-Verlag, Berlin, 2006.
\newblock Lectures from the 32nd Summer School on Probability Theory held in
  Saint-Flour, July 7--24, 2002, With a foreword by Jean Picard.

\bibitem[Riz15]{Riz15}
Douglas Rizzolo.
\newblock Scaling limits of {Markov} branching trees and {Galton--Watson} trees
  conditioned on the number of vertices with out-degree in a given set.
\newblock {\em Ann. Inst. H. Poincar{\'e} Probab. Statist.}, 51(2):512--532,
  2015.

\end{thebibliography}
\end{document}